\documentclass[11pt,letterpaper]{article}

\usepackage[margin=1in]{geometry} 
\usepackage{amsmath,amsthm,amssymb,enumitem, bbm, amsthm}
\usepackage{graphics}
\usepackage{tikz}
\usepackage[dvipsnames]{xcolor}
\usepackage{hyperref}
\hypersetup{
	colorlinks=true,
	linkcolor=blue,
	citecolor=blue,
	filecolor=magenta,      
	urlcolor=cyan,
	linktoc=page
}

﻿
﻿
﻿
﻿
\newcommand{\veeSymbol}{
	\begin{tikzpicture}[baseline]
		\draw[scale=0.2, thick] (0,0) -- (1,1);   
		\draw[scale=0.2, thick] (0,0) -- (-1,1);  
	\end{tikzpicture}
}
﻿
﻿
\newcommand{\wedgeSymbol}{
	\begin{tikzpicture}[baseline]
		\draw[scale=0.2, thick] (-1,0) -- (0,1);   
		\draw[scale=0.2, thick] (0,1) -- (1,0);    
	\end{tikzpicture}
}
﻿
﻿
﻿
﻿

\newcommand{\R}{\mathbb{R}}  
\newcommand{\Z}{\mathbb{Z}}
\newcommand{\N}{\mathbb{N}}

\newcommand{\prob}{\mathbb{P}}
\newcommand{\Lag}{\mathcal{L}}

\newcommand{\E}{\mathbb{E}}

\newcommand{\Ham}{\mathbf{H}}
\newcommand{\ext}{\textrm{ext}}
﻿
﻿
﻿
﻿
\theoremstyle{plain}
\newtheorem{thm}{Theorem}[section]
﻿
﻿
\theoremstyle{definition}
\newtheorem{defn}[thm]{Definition} 
\newtheorem{lemma}[thm]{Lemma} 
\newtheorem{cor}[thm]{Corollary} 
\newtheorem{con}[thm]{Conjecture} 
\newtheorem{remark}[thm]{Remark} 
﻿
﻿
\numberwithin{equation}{section}
﻿
﻿
﻿
﻿
\newcommand*{\defeq}{\mathrel{\vcenter{\baselineskip0.5ex \lineskiplimit0pt
			\hbox{\scriptsize.}\hbox{\scriptsize.}}}%
	=}

\begin{document}
	
	\title{Bulk heights of the KPZ line ensemble}
	\author{Duncan Dauvergne \and Fardin Syed}
	
	\maketitle
	
	\begin{abstract}
		For $t > 0$, let $\{ \mathcal{H}^{(t)}_n, n \in \N\}$ be the KPZ line ensemble with parameter $t$, satisfying the homogeneous $\Ham$-Brownian Gibbs property with $\Ham(x) =e^x$. We prove quantitative concentration estimates for the $n$th line $\mathcal{H}^{(t)}_n$ which yield the asymptotics $\mathcal{H}^{(t)}_n = n \log n + o(n^{3/4 + \epsilon})$ as $n \to \infty$. A key step in the proof is a general integration by parts formula for $\Ham$-Brownian Gibbs line ensembles which yields the identity $\E \exp(\mathcal{H}^{(t)}_{n + 1}(x) - \mathcal{H}^{(t)}_n (x)) = n t^{-1}$ for any $n, t, x$. 
	\end{abstract}

	\section{Introduction}
	
	Introduced by Kardar, Parisi and Zhang \cite{PhysRevLett.56.889}, the KPZ equation 
	\begin{align} \label{eqn:kpz_eqn}
		\partial_t \mathcal{H} = \frac{1}{2} \partial_x^2 \mathcal{H} + \frac{1}{2} (\partial_x \mathcal{H})^2 + \xi
	\end{align}	
	is a fundamental model for random one-dimensional interface growth. Here $\mathcal H = \mathcal{H}(t,y)$ denotes the height of the interface at time $t$ and spatial position $y$, and $\xi$ is a space-time white noise. The KPZ equation is often understood through a Cole-Hopf transformation of the multiplicative stochastic heat equation (SHE). That is, the solution to the KPZ equation is given by $\mathcal{H}(t,x) = \log \mathcal{Z}(t,x)$ where $\mathcal{Z}$ satisfies the following stochastic PDE:
	\begin{align} \label{eqn:SHE}
		\partial_t \mathcal{Z} &= \frac{1}{2} \partial_y^2 \mathcal{Z} + \mathcal{Z} \xi.
	\end{align}
	While it is easy to check the formal relationship between \eqref{eqn:kpz_eqn} and \eqref{eqn:SHE}, making this rigorous is extremely difficult and was finally understood in \cite{hairer2012solvingkpzequation}.
	
	The KPZ equation with \textit{narrow-wedge initial data} is given by $\mathcal{H}^{\mathrm{nw}} = \log \mathcal{Z}^{\mathrm{nw}}$, where $\mathcal{Z}^{\mathrm{nw}}$ corresponds to the solution of equation \eqref{eqn:SHE} with Dirac-delta initial condition $\mathcal{Z}(0,y) = \delta_0(y)$. At a fixed time $t > 0$, the function $x \mapsto \mathcal{H}^{\mathrm{nw}}(x, t)$ can be embedded as the top curve $\mathcal H^{(t)}_1$ in a random family of functions $\mathcal H^{(t)} :=\{\mathcal H^{(t)}_n:\R\to \R, n \in \N\}$ known as the \textbf{KPZ$_t$ line ensemble}. The KPZ$_t$ line ensembles were constructed in \cite{corwin2016kpz} as edge-scaling limits of the O’Connell–Yor polymer. An alternate construction can be given through a multi-layer extension of the multiplicative SHE \cite{O_Connell_2015}. The two definitions were connected in \cite{Nica_2021}. A third definition can be derived from \cite{dimitrov2022characterization}: the KPZ$_t$ line ensemble is the unique (in law) family of curves whose lowest indexed line $\mathcal H^{(t)}_1$ is equal in law to $\mathcal{H}^{\mathrm{nw}}(\cdot, t)$ and which satisfies a certain $\Ham$-Brownian Gibbs property, see \eqref{E:RN-intro} and surrounding discussion.
	
	 Because slightly different scalings of the KPZ line ensembles appear in the literature, we adopt in this paper the precise definition given in Section~\ref{subsec:KPZ_Line_Ensemble_defn}, which coincides with the formulation in \cite{Nica_2021} and with the originally published version of \cite{corwin2016kpz}. This choice of scaling ensures that the KPZ$_t$ line ensemble possesses the \textit{homogeneous} $\Ham$-Brownian Gibbs property and is stationary with respect to the parabola $-x^{2}/(2t)$, see Section \ref{S:comparison} for discussion.
	
	Since its introduction in \cite{corwin2016kpz}, the KPZ$_t$ line ensembles have been used to derive a rich collection of results about the KPZ equation, e.g. see \cite{das2023law, corwin2021kpz, ganguly2023brownian, ganguly2022sharp, corwin2020kpzequationtailsgeneral}. Most prominently, as $t \to \infty$, a combination of \cite{wu2023convergence, virag2020heat, dimitrov2021characterization, aggarwal2023} implies that the KPZ$_t$ line ensemble converges after rescaling to the Airy line ensemble \cite{prahofer2002scale, corwin2013browniangibbspropertyairy}, a non-intersecting line ensemble which arises as a universal scaling limit in random matrices, random tilings, and random interface growth, e.g., see \cite{johansson2003discrete, baik2007discrete, petrov2014asymptotics, duse2018universal, dauvergne2023uniform, aggarwal2025edge} for a sample of results and the survey \cite{johansson2018edge} for more references. Wu \cite{wu2025kpzequationdirectedlandscape} used this to prove convergence of the full set of solutions to the KPZ equation driven with the same noise to the directed landscape \cite{Dauvergne_2022}, the richest limit object in the KPZ universality class. 
	More recently, \cite{wu2025optimaltransport} showed that any solution to the KPZ equation at time $t$ is an (explicit) measurable function of the KPZ$_t$ line ensemble. 
		
		However, despite the central role played by the object, the behaviour in the bulk of the KPZ line ensembles (i.e., when the line index $n$ is large) is still poorly understood. Our main result, Theorem~\ref{thm:main_thm}, is a quantitative bound on the location of the KPZ$_t$ line ensemble as the index $n$ tends to infinity. For this theorem and throughout the paper, for $t > 0$, $n \in \N$, we define 
	\begin{align*}
		e_n^{(t)} &= \log (t^{1 - n} (n-1)!) + \frac{t}{24}.
	\end{align*}
	\begin{thm} \label{thm:main_thm} 
		Fix $t_0 > 0, \epsilon > 0$.  There exist constants $d, C > 0$ so that for all $n \in \N, t > t_0$ and $a \ge C (t n^{2\epsilon} + t^{1/4} n^{3/4 + 2\epsilon})$ we have
	\begin{align*}
		\prob \left(  | \mathcal{H}^{(t)}_n(0) - e_n^{(t)} | > a \right) & \leq 2 \exp(- d a n^{\epsilon}  \max(1, t^{-1/4} n^{1/4})).
	\end{align*}
	\end{thm}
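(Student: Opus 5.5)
The plan is to telescope along the gaps $G_k(x) := \mathcal{H}^{(t)}_{k+1}(x) - \mathcal{H}^{(t)}_k(x)$. The normalisation satisfies $e^{(t)}_{k+1} - e^{(t)}_k = \log(k/t)$, which is exactly $\log \E e^{G_k(x)}$ by the integration by parts identity $\E \exp(G_k(x)) = k t^{-1}$ from the abstract. So the theorem amounts to showing that each $G_k$ concentrates near $\log(k/t)$, with errors that add up to at most $n^{3/4}$ over $k < n$. The base case $k=1$ is the one-point tail bound for $\mathcal{H}^{(t)}_1(0) = \mathcal{H}^{\mathrm{nw}}(0,t)$ around $t/24$ (recall $e^{(t)}_1 = t/24$). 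I would take this from the literature on KPZ upper and lower tails. These tails are uniform in $t > t_0$, with fluctuations of order $t^{1/3}$, which are absorbed by the $t n^{2\epsilon}$ term.

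\emph{Upper tail of $\mathcal{H}^{(t)}_n(0)$.} Markov's inequality applied to the identity gives $\prob(G_k(x) > \log(k/t) + s) \le e^{-s}$ for every $x$. A union bound over $k$ is too weak, so I would use a block or stationarity argument. Fix a spatial window $I$ of length $\ell$ and integrate the identity over $x \in I$; by stationarity with respect to the parabola $-x^2/(2t)$, the averages $\ell^{-1}\int_I e^{G_k}$ have mean $k/t$ up to an explicit parabola correction. On the event that $\mathcal{H}^{(t)}_n(0)$ is too large, a Brownian Gibbs comparison (conditioning on $\mathcal{H}^{(t)}_{n+1}$ and the endpoints of $I$, then using monotone coupling) shows that $\mathcal{H}^{(t)}_n$ stays large on most of a window of length $\ell \approx (t/n)^{1/2}$. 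This is the natural scale, since the interaction $e^{G}$ has strength $\approx n/t$. Summing the exponential Markov bounds over blocks of consecutive indices, and optimising the block size against the Brownian fluctuation $\sqrt{\ell}$ accumulated over $n$ curves, should give the error $t^{1/4} n^{3/4}$ together with the tail exponent $a n^{\epsilon} t^{-1/4} n^{1/4}$.

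\emph{Lower tail.} This is the main obstacle. Jensen's inequality only gives $\E G_k \le \log(k/t)$, so an exact first moment of $e^{G_k}$ cannot by itself prevent the gaps from being very negative most of the time while rare large values supply the mean. To rule this out I would apply the integration by parts formula a second time, to a test functional such as $e^{2G_k}$ or $e^{G_k + G_{k+1}}$, to obtain a bound of the form $\E e^{2G_k} \lesssim (k/t)^2$. Paley--Zygmund or a second-moment argument on the spatial averages over windows of length $\ell$ then shows that $\ell^{-1}\int_I e^{G_k}$ is of order $k/t$ with high probability. Combining this with the Gibbs resampling argument above would turn the lower bound on the averaged exponential gaps into a lower bound on $\mathcal{H}^{(t)}_n(0)$. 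I would first try to run this with the same block structure as in the upper bound, and if the second-moment identity is not clean, fall back on comparing with the top curves of a resampled ensemble with a lowered boundary.

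Finally, I would combine the two tails. The $n^{2\epsilon}$ losses come from union bounds over $\le n$ indices and $\mathrm{poly}(n)$ windows, and the two error scales $t n^{2\epsilon}$ and $t^{1/4}n^{3/4+2\epsilon}$ correspond to the regimes $t \gtrsim n$, where the block argument degenerates and the $\max(1,\cdot)$ equals $1$, and $t \lesssim n$.
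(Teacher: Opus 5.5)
There is a genuine gap. Your plan has no mechanism that controls the \emph{collective} deviation of the $n-1$ gaps, and first-moment information cannot supply one. Set $X_k = \tfrac{t}{k}\,\ell^{-1}\int_I e^{G_k}$, or $X_k = \tfrac{t}{k}e^{G_k(0)}$ without averaging. All the identity gives you is $\E X_k \approx 1$. On the event that $\mathcal{H}^{(t)}_n - e^{(t)}_n \ge a$ on $I$ (with the top line not high), Jensen gives $\sum_k \log X_k \gtrsim a$. AM--GM turns this into $\frac{1}{n-1}\sum_k X_k \ge e^{a/(n-1)}$, and Markov then yields only $e^{-a/(n-1)}$. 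Without decorrelation across $k$ this cannot be improved, since all the $X_k$ could coincide. So windows and index blocks only reproduce essentially the paper's a priori bound, Lemma~\ref{lemma:endpts_expmm}. That bound is nontrivial only for $a \gtrsim n\log n$, which is the size of $e^{(t)}_n$ itself. ``Optimising the block size against $\sqrt{\ell}$'' does not produce the scale $t^{1/4}n^{3/4}$.

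The lower-tail step fails more basically. Theorem~\ref{thm:int_parts} together with stationarity yields a closed identity only for constant $F$. For nonconstant $F$, the left side involves joint statistics of $Y$ and $\mu_{a,b}$ that stationarity does not determine. Also, $F$ sees an increment of a single line, not a gap, so there is no identity for $\E e^{2G_k}$. Moreover, $\E e^{2G_k} \lesssim (k/t)^2$ should be false for $k \ll t$. Heuristically $G_1 \approx -t^{1/3}(\mathcal{A}_1-\mathcal{A}_2)(0)$, and the quadratic vanishing of the Airy gap density at $0$ gives $\E e^{G_1} \asymp \E e^{2G_1} \asymp t^{-1}$. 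In that regime the exponential moment really is carried by rare events. For fixed $k$ and large $t$, $G_k$ is typically of order $-(t/k)^{1/3}$, which is far below $\log(k/t)$, so gap-by-gap concentration fails. The theorem survives only because these errors sum to $O(t)$.

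The missing idea is a way to make all $n$ lines concentrate simultaneously. The paper gets this from the Toda boundary problem of Section~\ref{sec:toda_bdary}: $n$ lines on $[-T,T]$ over the floor $-x^2/(2t)+e_{n+1}$, with side values at Toda spacing.
\begin{enumerate}
\item Subtracting the Toda parabolas via Cameron--Martin gives a density proportional to $\exp(-t^{-1}\sum_i i\int\Phi(g_{i+1}-g_i))$ against free bridges, where $\Phi(x)=e^x-1-x\ge 0$.
\item A tube estimate gives $\hat Z \ge \tfrac12 e^{-dTnM^2}$ with $M=\max(1,t^{-1/4}n^{1/4})$.
\item A recursion over blocks of indices (Lemma~\ref{lemma:top_line_melon}) then keeps the top line below $a$, except with probability $e^{-da^2/T}$, once $a \gtrsim Tn^{\epsilon}M$.
\item Monotone couplings give Lemma~\ref{lemma:curv_adj_lemma}, together with the modulus of continuity of \cite{wu2025optimaltransport}. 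The couplings use tails of lines $1,\dots,n-1$ at $\pm T$ to move the side data, and change the floor's curvature from $t$ to some $t'\in[t,2t]$ to restore Toda gaps.
\item Lemma~\ref{lemma:curv_adj_lemma} chains upper tails at level $a$ to lower tails at level $\kappa$, and back to upper tails at level $\sigma \ge 4a$. With $P^+_{n,t}(a)=\max_{i\le n}\prob(\mathcal{H}^{(t)}_i(0)-e^{(t)}_i\ge a)$, this reads $P^+_{n,t}(a)\le 2e^{-dan^{\epsilon}M}+n^2P^+_{n,t}(4a)$.
\item Iterating $O(\log n)$ times reaches level $an^3$, where your crude Markov bound finally suffices.
\item The lower tail then follows from these upper tails through the same comparison, with no second moments.
\end{enumerate}
Note also that the relevant window is $T\approx\sqrt{tn}$, not $(t/n)^{1/2}$. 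The $n^{2\epsilon}$ loss comes from Lemma~\ref{lemma:top_line_melon} and the constraints \eqref{E:param-ranges}, not from union bounds.
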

	
	In particular, Theorem \ref{thm:main_thm} implies that the KPZ$_t$ line ensemble satisfies the following strong law of large numbers. Almost surely,
	\begin{equation}
		\lim_{n \to \infty} \frac{\mathcal{H}^{(t)}_{n}(0)}{n \log n} = 1.
	\end{equation}
	In particular, the KPZ line ensembles tend to \textit{positive} $\infty$ as we increase the line index! This behavior is in stark contrast to the Airy line ensemble $\{\mathcal{A}_n\}_{n=1}^{\infty}$, which diverges to \textit{negative} $\infty$: $\mathcal{A}_n(0) = (1 + o(1))(-3 \pi n/2)^{2/3}$, see Section \ref{S:comparison} for further comparison.
	
	Theorem \ref{thm:main_thm} only gives asymptotics for one-point statistics of KPZ lines. Given the exponential control in Theorem \ref{thm:main_thm}, we can easily pass from one-point statistics to supremum/infimum bounds on the KPZ line ensemble using a strong modulus of continuity for the KPZ line ensemble \cite{wu2025optimaltransport} and parabolic stationarity. Below is a sample statement.
	
	\begin{cor}
\label{C:mod-plus-moment}
	Fix $t_0 > 0, \epsilon > 0$. Then there exist constants $d, C > 0$, such that for all $n \in \N, t > t_0$ and $a > C (t n^{2\epsilon} + t^{1/4} n^{3/4 + 2\epsilon})$ we have the estimate
\begin{align*}
	\prob \left(  \sup_{x \in [-e^a, e^{a}]} | \mathcal{H}^{(t)}_n(x) - \frac{x^2}{2t} - e_n^{(t)} | > a \right) & \leq 2 \exp(- d a n^{\epsilon}  \max(1, t^{-1/4} n^{1/4})).
\end{align*}
	\end{cor}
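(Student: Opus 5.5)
Combine Theorem~\ref{thm:main_thm} at a grid of points (using stationarity) with a modulus of continuity bound to interpolate between grid points, and finish with a union bound.

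\textbf{Stationarity.} By the stationarity of the KPZ$_t$ line ensemble with respect to the parabola $-x^2/(2t)$ (Section~\ref{S:comparison}), for every fixed $x \in \R$ the random variable $\mathcal{H}^{(t)}_n(x) + x^2/(2t)$ has the same law as $\mathcal{H}^{(t)}_n(0)$. This is the sign convention under which the parabolic correction appears in the statement; with the opposite convention one replaces $-x^2/(2t)$ by $+x^2/(2t)$ throughout. Hence Theorem~\ref{thm:main_thm}, applied with $a/2$ in place of $a$, gives for each fixed $x$
\[
\prob\bigl( |\mathcal{H}^{(t)}_n(x) \pm x^2/(2t) - e_n^{(t)}| > a/2 \bigr) \le 2\exp\bigl(-d' a n^{\epsilon}\max(1,t^{-1/4}n^{1/4})\bigr).
\]
This is valid because $a/2$ still exceeds the threshold, after enlarging $C$.

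\textbf{Grid and union bound.} Take the grid $x_j = j\delta$ in $[-e^a,e^a]$ with mesh $\delta = 1$ (or $\delta = e^{-a}$). There are at most $O(e^{2a})$ grid points. Writing $\beta := n^{\epsilon}\max(1,t^{-1/4}n^{1/4})$, the union bound costs a factor $e^{2a}$. Since $\beta \ge n^\epsilon \ge 1$, we can absorb $e^{2a}$ into $\exp(-d'a\beta)$ provided $d'$ is large relative to $2$. If $d'$ is small, we instead use the threshold on $a$ and run the argument with $\epsilon/2$ in place of $\epsilon$. The extra factor $n^{\epsilon/2}$ then beats the constant $2$ once $n$ is large; small $n$ can be handled by adjusting constants only if $d'$ already exceeds 2. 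I would handle this carefully: apply Theorem~\ref{thm:main_thm} with a smaller $\epsilon'$ so that $d a n^{\epsilon'} \ge 4a$ fails only for bounded $n$. The cleanest route is to note that the theorem at level $Ka$ gives exponent $dKa\beta$, which beats $2a$ for $K$ large. Then use $a/2 \to$ a rescaled deviation, at the price of enlarging $C$ in the threshold.

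\textbf{Interpolation.} Between grid points we need
\[
\sup_{|x-y|\le \delta,\ |x|\le e^a} |\mathcal{H}^{(t)}_n(x)-\mathcal{H}^{(t)}_n(y)| \le a/4,
\]
together with the deterministic bound $|x^2-y^2|/(2t) \le e^a\delta/t$. This is why $\delta = e^{-a}$ and not $\delta = 1$ is the right choice, and the grid then has $e^{2a}$ points. For the fluctuation term, invoke the modulus of continuity from \cite{wu2025optimaltransport} for $\mathcal{H}^{(t)}_n$ minus the parabola on unit intervals, with stretched-exponential tails uniform in the interval location by stationarity. Union-bound over the $O(e^a)$ unit intervals. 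The modulus bound gives tail $\exp(-c a^{3/2})$ or better at level $a/4$.

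\textbf{Main obstacle.} The hard step is matching the rates. The continuity estimate has to give decay at least $\exp(-d a\beta)$ uniformly in $n$ and $t > t_0$. If it does not depend on $n$, this requires $a$ large compared with $n^{\epsilon}\max(1,t^{-1/4}n^{1/4})$, and the lower bound $a \ge C t^{1/4} n^{3/4+2\epsilon}$ provides exactly such room. So I would first verify the precise $n,t$ dependence of the modulus bound in \cite{wu2025optimaltransport} and then tune $d$ accordingly.
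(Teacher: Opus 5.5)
Your strategy matches the paper's: union-bound Theorem~\ref{thm:main_thm} over a grid in $[-e^a,e^a]$ using stationarity, and control the increments between grid points with Lemma~\ref{lemma:opt_tran}. Two of your concerns are not real obstacles.

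First, the parabola. The quantity in the statement should read $\hat{\mathcal H}_n(x)=\mathcal H_n(x)+x^2/(2t)-e_n^{(t)}$; the sign of the parabola there is a typo. This stationary process is exactly the one whose increments Lemma~\ref{lemma:opt_tran} controls, so there is no deterministic parabola term to interpolate. Mesh $1$ with $O(e^a)$ grid points suffices, and your choice $\delta=e^{-a}$ only turns the union-bound factor into $e^{2a}$ instead of $e^{a}$.

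Second, the modulus bound. Lemma~\ref{lemma:opt_tran} has Gaussian tails with a universal constant, uniform in $n$, $t$ and the window. Apply it to the $O(e^a)$ windows $[k,k+2]$, $k\in\Z$. Do not apply it to $[-e^a,e^a]$ at once, because there the logarithm in the modulus is of order $a$ and you would only get $e^{-ca}$. On these windows it bounds the increment term by $O(e^{a})e^{-ca^2}$. The threshold on $a$ forces $a\gtrsim n^{\epsilon}M$ with $M=\max(1,t^{-1/4}n^{1/4})$, so this is at most $e^{-c'an^{\epsilon}M}$.

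The genuine gap is the step you flagged but did not close: absorbing the union-bound factor. With mesh $1$ you need $e^{a}\cdot 2e^{-dan^{\epsilon}M}\le 2e^{-d'an^{\epsilon}M}$, which requires $dn^{\epsilon}M>1$; with your mesh you need $dn^{\epsilon}M>2$. When $n$ is bounded, $n^{\epsilon}M$ is bounded, so you need the constant $d$ of Theorem~\ref{thm:main_thm} to exceed $1$. The statement of the theorem says nothing about the size of $d$.

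Neither of your fixes supplies this. Passing to $\epsilon/2$ only weakens the decay and does nothing for bounded $n$. Applying the theorem at level $Ka$ controls $\{|\hat{\mathcal H}_n(x_j)|>Ka\}$, not the event $\{|\hat{\mathcal H}_n(x_j)|>a/2\}$ you need. Since the window $[-e^a,e^a]$ is tied to the same $a$, no rescaling is available.

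The paper's proof makes the same union bound, arriving at $3e^{a-dan^{\epsilon}M}+e^{-da^2}$, and its final ``simplification'' tacitly uses $dn^{\epsilon}M>1$. To justify it you have to go into the proof of Theorem~\ref{theorem:tails_kpzLE_sharp}. The decay there comes from terms $\exp(-d_0a^2/T)$ with $T=a/(cn^{\epsilon}M)$. The constant $c$ can be taken as large as you like, provided $\kappa$, $\sigma$, the number of iterations, and the threshold constant $C$ are adjusted accordingly; the KPZ one-point tail terms also improve as $C$ grows. This yields Theorem~\ref{thm:main_thm} with an arbitrarily large exponent constant. With that version in hand, mesh $1$ and the increment bound above finish the proof.
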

	
	\begin{remark}
		\label{R:small-n}
		Our focus in Theorem \ref{thm:main_thm} is in obtaining interesting bounds in the large $n$ regime. Indeed, while we do not expect that any of our estimates are optimal, the bounds for small $n$ and large $t$ are particularly weak. We note in passing that as part of the proof of Theorem \ref{thm:main_thm} we obtain the following one-sided estimate, which gives better control for certain values of $a$ when $t$ is small. For all $t_0 > 0$, there exists a constant $d > 0$ such that for any $a > 0, t > t_0$ and $n \in \N$, we have 
		$$
		\prob\left(  \mathcal{H}^{(t)}_n(0) - e_n^{(t)}  > a \right) \le 2\exp(-d a^{3/2} t^{-1/2}) + n \exp(-a/(2n)).
		$$
		This estimate is contained in Lemma \ref{lemma:endpts_expmm}. By combining the above bound with Theorem \ref{thm:main_thm}, we can see that the KPZ$_t$ line ensemble \textit{turns around}. For example, for an index $i \le t^{1 - \epsilon}$ the above bound and the known asymptotics for $\mathcal H_1^{(t)}$ imply that 
		$$
		\mathcal H_1^{(t)}(0) - \mathcal H_i^{(t)}(0) \ge (i/2) \log t
		$$
		with probability tending to $1$
		as $t \to \infty$. On the other hand, Theorem \ref{thm:main_thm} implies that for $i \ge t^{1 + \epsilon}$ we have that
		$$
		\mathcal H_1^{(t)}(0) - \mathcal H_i^{(t)}(0) \le - (i/2) \log i
		$$
		with probability tending to $1$ as $t \to \infty$.	
	\end{remark}
	
	\begin{remark}[Future work] One of the main motivations behind our bounds in Theorem \ref{thm:main_thm} is to prove a new characterization of the KPZ sheet from time $0$ to time $t$ in terms of the KPZ line ensemble \cite{dauvergne2026+}, based on a version of the geometric Robinson-Schensted-Knuth isometry (see \cite{corwin2021invariance} and \cite[Theorem 3.3]{dauvergne2022hidden}) which comes from extension of a discrete result in \cite{noumi2004tropical}. It turns out that the characterization requires an asymptotic bound on the KPZ line locations as the crucial technical estimate.
	\end{remark}
	\subsection{Should we be surprised? A discussion of Theorem \ref{thm:main_thm}}
	\label{S:comparison}
	
	Before moving into the proof of Theorem \ref{thm:main_thm}, we give two heuristics for understanding the bulk behaviour of the KPZ line ensemble.
	
	A good starting point for understanding the KPZ line ensemble is to first consider its more accessible cousin, the (parabolic) Airy line ensemble. The parabolic Airy line ensemble is a collection of non-intersecting paths $\mathcal A = \{ \mathcal A_{n}:\R \to \R, n \in \N \}$ satisfying the ordering constraint $\mathcal A_{i} > \mathcal A_{i + 1}$ for all $i \ge 1$. The process $x \mapsto \mathcal A(x) + x^2$ is stationary. The Airy line ensemble was first constructed in \cite{prahofer2002scale} as a scaling limit of the polynuclear growth model, and realized as a system of non-intersecting locally Brownian paths in \cite{corwin2013browniangibbspropertyairy}. 
	
	The paper \cite{corwin2013browniangibbspropertyairy} shows that the Airy line ensemble satisfies a remarkably useful resampling property, known as the \textbf{Brownian Gibbs property}. For any finite rectangle $R := [a, b] \times \{1, \dots, k\} \subset \mathbb{N} \times \mathbb{R}$, the conditional law of $\{\mathcal{A}_i(x) : (x, i) \in R\}$ given the values of $\mathcal{A}$ on $R^c$, is that of a collection of $k$ Brownian bridges connecting up the endpoints $\mathcal{A}_i(a), \mathcal{A}_i(b), i = 1, \dots, k$, and constrained so that all paths are non-intersecting. In fact, this property and the parabolic shape of the top line together characterize the object, see \cite{aggarwal2023}. 
	
Like the Airy line ensemble, the KPZ$_t$ line ensemble $\mathcal H^{(t)}$ is also stationary under a parabolic shift: $x \mapsto \mathcal H^{(t)}(x) + x^2/(2t)$. Moreover, it satisfies a soft version of the Brownian Gibbs property, known as an $\Ham$-Brownian Gibbs property. For a rectangle $R$ as above, given the values of $\mathcal H^{(t)}$ on $R^c$, the conditional law of $\mathcal H^{(t)}$ on $R$ is that of $k$ Brownian bridges connecting up the endpoints $\mathcal{H}_i^{(t)}(a), \mathcal{H}_i^{(t)}(b), i = 1, \dots, k$, reweighted by a Radon-Nikodym derivative proportional to
	\begin{align}
	\label{E:RN-intro}
\exp \big( - \mathcal{E}_{\Ham} (B) \big) := \exp\left( - \sum_{i = 1}^{n} \int_a^b \Ham( B_{i+1}(x) - B_i(x))dx \right), \qquad \Ham(x) = e^x.
\end{align}
Notice that this Radon-Nikodym derivative provides a double exponential penalty whenever two lines in the ensemble violate the ordering $\mathcal H_i >  \mathcal H_{i+1}$, and so while lines are not strictly ordered in the KPZ$_t$ line ensembles, the formula \eqref{E:RN-intro} suggests that they strongly prefer to be that way. Further strengthening the connection between the KPZ$_t$ and Airy line ensembles is the scaling limit result of \cite{wu2023convergence, virag2020heat, dimitrov2021characterization, aggarwal2023} discussed above. If $ \{ \mathcal{H}_n^{(t)} \}_{n = 1 }^{\infty} $ denotes the KPZ$_t$ line ensemble, then
\begin{align*}
	\Big\{ t^{-1/3} \mathcal{H}_n^{(t)}(t^{2/3} x) + \frac{t^{4/3}}{24} \Big\}_{n \in \mathbb{N}} \xrightarrow[t \to \infty]{d} \mathcal A
\end{align*}
almost surely on compact subsets of $(x, n) \in \mathbb{R} \times \mathbb{N}$. To see why we might expect the above convergence, first note that the rescaling of $\mathcal H^{(t)}$ on the left-hand side above is stationary after addition of the parabola $x^2$, and satisfies an $\Ham$-Brownian Gibbs property with $\Ham(x) = e^{t^{1/3} x}$. In the $t \to \infty$ limit, this Gibbs property becomes the usual Brownian Gibbs property, which can equivalently be defined using \eqref{E:RN-intro} with $\Ham(x) = \infty \mathbf{1}(x \ge 0)$.

Given the many parallels between the KPZ and Airy line ensembles, it may be quite surprising that their bulk behaviour differs so dramatically. Indeed, it is not immediately clear why the Gibbs property \eqref{E:RN-intro}, which seems to encourage the line ordering $\mathcal H_1 > \mathcal H_2 > \cdots$ should result in the opposite ordering asymptotically in $n$. At the highest level, we can justify this by imagining that unlike the Brownian Gibbs property, the soft $\Ham$-Brownian Gibbs property \eqref{E:RN-intro} is too weak to compensate for the downward parabolic pull on the lines without the smallest indexed lines sagging.

An alternate point of view on the KPZ line ensembles comes through directed polymers. At the level of the KPZ equation, the connection to a directed polymer is given in \cite{alberts2014continuum}. The extension to the KPZ line ensemble comes from the  
the multi-layer stochastic heat equation \cite{O_Connell_2015}, which we describe formally. 
Let $\xi = \xi(t, x)$ be a space-time white noise on $\R^2$. 
For $t > 0$, define
\begin{equation}
	\label{E:wick}
	\mathcal{Z}^t_n(x) = p(t, x)^{n} \, \mathbb{E}_{0^n}^{x^n} \Big[ : \exp : \Big( \sum_{i=1}^n \int_0^t \xi(s, B_i(s)) \, ds \Big) \Big],
\end{equation}
where $p(t, x) = (2 \pi t)^{-1/2} e^{-x^2/2t}$ is the Gaussian transition kernel and $\mathbb{E}_{0^n}^{x^n}$ is the distribution of $n$ non-intersecting Brownian bridges $B_1, \ldots, B_n$ each starting at location $0$ at time $0$ and ending at location $x$ at time $t$. Since $\xi$ is defined only as a distribution, the integrals $\int \xi(s, B_i(s)) \, ds$ are ill-defined, and thus need to be renormalized correctly. Here $: \exp :$ is the Wick exponential, which effectively is a shorthand for rigorously interpreting \eqref{E:wick} through the following Wiener chaos expansion:
\begin{align*}
	Z^{t}_n(x) \defeq p(t, x)^n  \Big(1 +  \sum_{k = 0}^{\infty} \int_{0 < t_1 < \cdots t_k < t} \int_{\R^k} R^{(n)}_k\big( (t_1, x_1) , \ldots , (t_k, x_k)  \big) \xi(dt_1, dx_1) \ldots \xi(dt_k , dx_k) \Big).
\end{align*}
Here $R^{(n)}_k$ is the $k$-point correlation function for $n$ non-intersecting Brownian bridges started at location $0$, time $0$ and ending at location $x$, time $t$. 
The intuition behind \eqref{E:wick} is that $Z^t_n$ is the partition function for a multi-path Brownian polymer in a white noise background. This point of view is further strengthened by the fact that we can alternate define $Z^t_n$ by taking a limit of the partition function for non-intersecting semi-discrete or lattice polymers, see \cite{Nica_2021, corwin2017intermediate} and Section \ref{subsec:KPZ_Line_Ensemble_defn}.

Just as the KPZ equation is connected to the stochastic heat equation, the KPZ line ensembles are connected to the multi-layer stochastic heat equation. The precise relationship was established by \cite{Nica_2021}. With our choices of normalization, we have that
\begin{align}
	\label{E:Htn}
	\mathcal{H}^{(t)}_n = \log \big( \frac{\mathcal{Z}^t_{n} }{\mathcal{Z}^t_{n - 1}}  \big) + \log (t^{1 - n} (n - 1)!).
\end{align}
The constant factor in \eqref{E:Htn} is, of course, the diverging piece of $e_n^{(t)}$. The remaining piece of $\mathcal{H}^{(t)}_n$ can be viewed as the (multiplicative) gain in the partition function $\mathcal{Z}^t_{n}$ as we add an extra path to our non-intersecting continuum polymer. If we have executed the normalization of $\mathcal{Z}^t_{n}$ in a nice way, it is reasonable to expect that this gain is of lower order, which greatly removes the mystery behind Theorem \ref{thm:main_thm}. However, while \eqref{E:Htn} gives a clear heuristic, we do not know how to prove Theorem \ref{thm:e_moments} from the polymer description and in the proof we will only ever work directly with $\mathcal{H}^{(t)}$. In particular, the factor $e_n^{(t)}$ in our theorem is derived from a connection to the one-dimensional Toda lattice, whereas in the derivation of \eqref{E:Htn} in \cite{Nica_2021} this constant comes from the volume of the $n$-dimensional simplex.

	\subsection{A more promising heuristic, the 1d Toda Lattice, and a proof sketch}

	To begin to understand the bulk behaviour of the KPZ$_t$ line ensembles, first consider the following resampling problem in either the Airy or KPZ$_t$ line ensembles. 
	\begin{description}
		\item[Parabolic boundary problem.] Fix the $(n+1)$st curve of a line ensemble to lie along the deterministic parabola $-x^2/(2t)$ on a large interval $[-T, T]$. Sample $n$ curves above this lower boundary, using either the Brownian Gibbs property or the $\mathbf{H}$-Brownian Gibbs property with $\Ham(x) = e^x$, with appropriate boundary conditions at $\pm T$.
	\end{description}
	In the Brownian Gibbs setting, the strict ordering constraint forces each curve to lie above the one below it, effectively requiring all $n$ curves to stay above the parabola. Lifting $n$ Brownian paths above a parabolic barrier over a long interval carries a substantial entropic (or Wiener) cost, but the ordering of the curves is preserved. Nonetheless, curves of large index are compressed more because of this entropic cost, which forces the gaps between Airy lines to decrease with increasing index.
	
	In contrast, under the $\mathbf{H}$-Brownian Gibbs property, disorder among the curves is not strictly forbidden but instead penalized via a double-exponential energy. As a result, the top few lines may still remain ordered to avoid this penalty. However, for sufficiently large $n$, the cost of maintaining all curves above the parabola outweighs the penalty associated with a few disorder violations. Thus, beyond a certain index, it becomes energetically favourable for some curves to violate the ordering, leading to a breakdown of the monotonicity in the lower part of the ensemble.
	
	Since the KPZ$_t$ line ensemble is stationary with respect to the parabolic shift $-x^2/(2t)$, one can analyze the balance between the Wiener cost and the interaction energy imposed by the $\Ham$-Brownian Gibbs property. This balance yields constraints on the distribution of gaps between successive curves. In particular, for a line ensemble governed by the $\mathbf{H}$-Brownian Gibbs property to be stationary under this parabolic shift, it turns out that the exponential moments of the inter-line gaps must take specific values.
	\begin{thm} \label{thm:e_moments}Let $\{ \mathcal{H}_{i}\}_{i = 1}^{\infty}$ be the KPZ$_t$ line ensemble. Then for all $i \ge 1$:
		\begin{align*}
			\E \big[ \exp(\mathcal{H}_{i +1}(0) - \mathcal{H}_i(0))\big] =  \frac{i}{t}.
		\end{align*} 
	\end{thm}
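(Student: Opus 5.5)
We will derive the identity from an integration by parts formula for $\Ham$-Brownian Gibbs line ensembles, combined with the stationarity of $x \mapsto \mathcal{H}^{(t)}(x) + x^2/(2t)$. Fix an interval $[-T,T]$ and condition on everything outside $[-T,T]\times\{1,\dots,k\}$ for some $k > i$. By the $\Ham$-Brownian Gibbs property, the conditional law of the top $k$ lines on $[-T,T]$ is that of Brownian bridges $B_1,\dots,B_k$ reweighted by $\exp(-\mathcal{E}_\Ham(B))$, where the energy includes the interaction $\int e^{B_{k+1}-B_k}$ with the fixed line $B_{k+1}=\mathcal{H}_{k+1}$.

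\textbf{Key step: a Cameron--Martin perturbation in one line.} For a smooth $\phi$ compactly supported in $(-T,T)$, shift the $j$th bridge, $B_j \mapsto B_j + s\phi$, and differentiate at $s=0$. Under Wiener measure this shift has density $\exp\bigl(-s\int \phi' dB_j - \tfrac{s^2}{2}\int (\phi')^2\bigr)$. Invariance of the total mass of the reweighted measure then gives the Gaussian integration by parts identity
\[
\E\Big[\int \phi''(x) \mathcal{H}_j(x)\,dx\Big] = \E\Big[\int \phi(x)\big( e^{\mathcal{H}_{j}(x)-\mathcal{H}_{j-1}(x)} - e^{\mathcal{H}_{j+1}(x)-\mathcal{H}_j(x)}\big)dx\Big].
\]
Here $\int \phi' dB_j = -\int \phi'' B_j$, and the derivative of $\mathcal{E}_\Ham$ in direction $\phi$ on line $j$ produces the bracket, with the convention $e^{\mathcal{H}_1-\mathcal{H}_0}\equiv 0$. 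This identity is a stochastic Toda lattice equation. We first prove it conditionally and then take expectations, and we need integrability. For integrability we need $\E|\mathcal{H}_j(x)|<\infty$ and $\E e^{\mathcal{H}_{j+1}-\mathcal{H}_j}<\infty$. These should follow by induction from the identity itself, since the terms have signs, together with standard one-point tail bounds for the top lines.

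\textbf{Use stationarity.} Write $\mathcal{H}_j(x) = -x^2/(2t) + \tilde{\mathcal{H}}_j(x)$ with $\tilde{\mathcal{H}}_j$ stationary. Then $\E \mathcal{H}_j(x) = -x^2/(2t) + c_j$, and the gap $\mathcal{H}_{j+1}(x)-\mathcal{H}_j(x)$ is stationary in law. Let $g_j := \E e^{\mathcal{H}_{j+1}(0)-\mathcal{H}_j(0)}$. The left side becomes $\int \phi''(x)(-x^2/(2t))\,dx = -t^{-1}\int\phi$ after two integrations by parts. The right side is $(g_{j-1}-g_j)\int\phi$. Hence $g_j - g_{j-1} = 1/t$ with $g_0=0$, which gives $g_i = i/t$ by summing over $j=1,\dots,i$.

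\textbf{Main obstacle.} The hardest part is making the integration by parts rigorous. Specifically, we must justify differentiating under the conditional expectation in $s$ and then removing the conditioning. This requires uniform integrability of $\int\phi' d B_j$ and of $e^{\mathcal{H}_{j+1}-\mathcal{H}_j}$ under the reweighted bridge measure, with random boundary data. I would first prove the identity for a general finite $\Ham$-Brownian Gibbs ensemble on $[-T,T]$ with fixed boundary data, where everything is bounded, since $e^{-\mathcal{E}_\Ham}\le 1$ and Gaussian moments are finite. For the exponential gap term I would use the a priori bound obtained from the identity with nonnegative test functions $\phi$, inducting on $j$, and then apply monotone convergence to pass to the unconditional expectation.
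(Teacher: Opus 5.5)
Your route is the paper's. Your Cameron--Martin identity is the weak form of Theorem \ref{thm:int_parts} with $F=1$: the kernel $\operatorname{Cov}^{T,a,b}$ there is just the test function $\phi$ with $-\phi''=\delta_b-\delta_a$ and $\phi(\pm T)=0$. The paper proves that identity by discretizing the Boltzmann weight and using finite-dimensional Gaussian integration by parts, rather than by shifting the path. Your induction on $j$, which uses the sign of $e^{\mathcal{H}_{j+1}-\mathcal{H}_j}$ to get finiteness of $\E e^{\mathcal{H}_{j+1}(0)-\mathcal{H}_j(0)}$, is also what the paper does.

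The step that does not work as you describe it is the integrability of the lines themselves. You need $\E|\mathcal{H}_j(x)|<\infty$ both to take expectations of the left-hand side and to evaluate it as $-t^{-1}\int\phi$ via stationarity. (Integrability of the increments $\mathcal{H}_j(x)-\mathcal{H}_j(0)$ would suffice, since $\int\phi''=0$.) This does not follow from the identity plus top-line tail bounds. For $j\ge 2$, the inductive gap bounds control only $\mathcal{H}_j(x)^+$, through $\mathcal{H}_j=\mathcal{H}_1+\sum_{k<j}(\mathcal{H}_{k+1}-\mathcal{H}_k)$. The conditional identity with $\phi\ge 0$ gives only the one-sided bound $\E_{\mathcal F}\int\phi''\mathcal{H}_j\le\E_{\mathcal F}\int\phi\,e^{\mathcal{H}_j-\mathcal{H}_{j-1}}$. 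Nothing in your argument controls the lower tail of $\mathcal{H}_j$; very negative gaps are only softly penalized by the Gibbs weight. So you cannot exclude that the expectation of the left side is $-\infty$, which is exactly the scenario in which $\E e^{\mathcal{H}_{j+1}-\mathcal{H}_j}=\infty$, and you cannot identify its value. The paper supplies this from an external result, Lemma \ref{lemma:opt_tran} (the optimal-transport estimate of Wu), which gives $\E|\mathcal{H}_n(x)|<\infty$ and $\E\mathcal{H}_n(x)=\E\mathcal{H}_n(0)-x^2/(2t)$ for every $n$. Only the base case $j=1$ can be handled by Lemma \ref{lemma:kpzeqn_tails} alone. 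With that input added, the rest of your argument goes through.
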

	Theorem \ref{thm:e_moments} is a special case of a more general balance-of-costs theorem for $\Ham$-Brownian line ensembles, recorded in Theorem \ref{thm:int_parts}. Theorem \ref{thm:int_parts} is proven by applying a Gaussian integration by parts argument to discretized ensembles, and then taking a limit.
	
	An immediate consequence of Theorem \ref{thm:e_moments} is that for $i \ll t$, we have $\mathcal{H}_i > \mathcal{H}_{i+1}$ with high probability, which is consistent with the strict ordering observed in the Airy line ensemble. However, when $i \gg t$, the exponential moment of the gap diverges, suggesting a breakdown of the ordering and the emergence of reverse monotonicity in the bulk of the ensemble.

	Given Theorem \ref{thm:e_moments}, it is easy to guess Theorem \ref{thm:main_thm}. Indeed, if we assume that all the lines $\mathcal H_i$ are deterministic, then Theorem \ref{thm:e_moments} forces $\mathcal H_i(0) = \log(t^{1-n}(n-1)!) + c$ for some constant $c$. We can establish that $c = t/24$ by the known asymptotics of $\mathcal H_1$. The difficulty in moving from Theorem \ref{thm:e_moments} to the more difficult Theorem \ref{thm:main_thm} is that an exponential moment can easily lie about the typical behaviour of a random variable.
	
	Let us now push further the problem of balancing the Wiener cost against the interaction energy in the parabolic boundary problem above. Recall that we can view the density of the Wiener measure of at a path $f$ as proportional to an exponentiated Dirichlet energy:
	\begin{align*}
		\mathcal{W}(f) \propto \exp\big( - \mathcal{E}_{\mathrm{dir}}(f) \big), \qquad \mathcal{E}_{\textrm{dir}} (f) = \sum_{i = 1}^n \int_{-T}^T \frac{1}{2} |\dot{f}|^2. 
	\end{align*}
	Consequently, the density of a $\mathbf{H}$-Brownian motion can be formally expressed as 
	$$
	\mathcal{P}_{\Ham}(f) \propto \exp\big(- \mathcal{E}_{\Ham}(f) - \mathcal{E}_{\mathrm{dir}}(f)\big).
	$$ 
	The most probable path under this measure corresponds to the minimizer of the combined energy functional 
	$$
	\mathcal{E}(f) = \mathcal{E}_{\Ham}(f) + \mathcal{E}_{\mathrm{dir}}(f).
	$$ 
	Applying a standard Euler–Lagrange variational principle to $\mathcal{E}$ reveals that this optimal path satisfies the equations of the one-dimensional Toda lattice
	\begin{align*}
		\ddot{f_i} &= e^{f_i - f_{i - 1}} - e^{f_{i + 1} - f_i }, \qquad f_0 = + \infty, \quad f_{n+1}(x) = - x^2/2t. 
	\end{align*}
	It is reasonable to expect that at large scales, paths in the parabolic boundary problem concentrate around an energy minimizer. Moreover, the parabolic stationarity of the KPZ$_t$ line ensemble suggests that we should have an energy minimizer whose paths are all shifted parabolas
		\begin{align*}
		f_i(x) = -\frac{x^2}{2t} + c_i, \qquad i = 1, \dots, n.
	\end{align*}
	Indeed, it is not hard to check that there is a unique such solution with the given values of $f_{n+1}, f_0$. The condition $\ddot{f}_i = -t^{-1}$ together with the Toda equations implies that
	\begin{align*}
		e^{f_{i+1} - f_i} = \frac{i}{t}, \qquad \text{or equivalently} \qquad c_i = -\sum_{j=i}^n \log(\frac{i}{t}).
	\end{align*}
	This matches the gap spacing in our main theorems. 
	
The proof of Theorem \ref{thm:main_thm} uses this Toda heuristic to bound the KPZ$_t$ line ensembles. This consists of two steps. In Section \ref{sec:toda_bdary}, we study the parabolic boundary problem above, where we specify that the side boundary conditions exactly match those of the parabolic Toda solution above. The setup here is amenable to a change of measure, where we remove the parabola and shift all endpoints to sit at the same location. The cost of doing this is that we change the Hamiltonian $e^x$ to the line-index dependent Hamiltonian $i(e^x - x - 1)$. Since this Hamiltonian increases steeply away from $0$, this suggests we have strong concentration around a flat solution. We establish this concentration crudely, using not much more than a bound on the partition function.
	
	It remains to relate the parabolic boundary problem back to the KPZ line ensembles. We do this in Section \ref{sec:bounding-final}. The basic idea here should feel familiar to researchers using Brownian Gibbs arguments (in particular, it is reminiscent of Chapter 2 of \cite{aggarwal2023}): in essence, we use monotone couplings for $\Ham$-Brownian line ensembles, raising and lowering boundary conditions appropriately and increasing or decreasing parabolic apertures to gain comparisons with different parabolic boundary problems. Our initial inputs to help construct these monotone couplings are the exponential moment control from Theorem \ref{thm:e_moments}, and a strong modulus of continuity for the KPZ line ensemble from \cite{wu2025optimaltransport}. With these initial inputs, we can recursively control line locations at $0$, and suprema and infima of lines to give an inequality chain which will eventually bound the moderate upper and lower tails of line locations in terms of their deep upper tails. Plugging in the bound on the deep upper tail from Remark \ref{R:small-n} (which itself is just Theorem \ref{thm:e_moments} and Markov's inequality) then gives the result.

	\subsection{Open problems}
	
	This paper provides only a first step toward understanding the overall distribution of the KPZ line ensemble. Indeed, we expect that there is a lot of room to improve the estimates in the present paper and bring in new tools (i.e., we have not used gap monotonicity), revealing new qualitative properties of the KPZ line ensemble. First, we expect that not only does the KPZ line ensemble eventually drift to $+\infty$ with the line index, but it is eventually totally ordered in a rigid way that closely matches the spacing in Theorem \ref{thm:e_moments}.
	
	\begin{con}
		Fix $t > 0$. Then almost surely as $n \to \infty$,
		\begin{align*}
			\mathcal{H}^{(t)}_{n +  1}(0) - \mathcal{H}^{(t)}_n (0) = \log(n/t) + o (1)
		\end{align*}
	\end{con}
	The reason to expect the $o(1)$ error here can be seen by considering the following $\Ham$-Brownian resampling problem with three lines. Let $n \gg t$, and let $\mathcal{H}^{(t)}_{n +  1}, \mathcal{H}^{(t)}_{n-1}$ be fixed at locations $e_{n+1}^{(t)} - x^2/(2t), e_{n-1}^{(t)} - x^2/(2t)$ on a long interval. Since $\mathcal{H}^{(t)}_{n +  1} - \mathcal{H}^{(t)}_{n-1} = \log(n(n-1)/t^2) \gg 0$, if we resample the line $\mathcal{H}^{(t)}_n$ according to the weighting in \eqref{E:RN-intro}, a balance of costs argument suggests that the line $\mathcal{H}^{(t)}_n$ should stay within $O(n^{-1/4 + \epsilon})$ of the Toda location $e_{n}^{(t)} - x^2/(2t)$. Continuing this line of reasoning, it is reasonable to expect that lines themselves also become rigid in the large $n$ limit.
	
	\begin{con}
		Fix $t > 0$. Then as $n \to \infty$, the sequence of functions
	$
	\mathcal{H}^{(t)}_n - \mathcal{H}^{(t)}_n(0)
$
converges almost surely, uniformly on compact sets to the deterministic parabola $-x^2/(2t)$.
	\end{con}
We are less confident on how tightly the point process $\mathcal{H}^{(t)}_n(0)$ itself concentrates around the Toda locations $e_{n}^{(t)} - x^2/(2t)$. It does not seem unreasonable to expect a lower order deterministic correction here, or even a random term which survives as $n \to \infty$. 

We end by conjecturing an analogue of the celebrated Airy line ensemble characterization from \cite{aggarwal2023}. 
	
	\begin{con}
		\label{C:airy-con}
		Let $\Lag = \{ \Lag_n \}_{n = 1}^{\infty}$ be a collection of random curves with the $\Ham$-Brownian Gibbs property with $\Ham(x) = e^x$. Suppose for any $\epsilon > 0$, there exists a constant $\kappa(\epsilon) > 0$ with 
		\begin{align*}
			\prob \big( |\Lag_1(x) - \frac{x^2}{2t}| \leq \epsilon x^2 + \kappa(\epsilon) \big) \geq 1 - \epsilon
		\end{align*}
		for all $x \in \R$.
		Then $\Lag$ is equal in distribution to the KPZ$_t$ line ensemble, up to an independent affine shift. 
	\end{con}

While there are fewer known applications for Conjecture \ref{C:airy-con} when compared to the Airy characterization in \cite{aggarwal2023}, we expect that it may be much easier to establish. For example, arguably the most difficult piece of \cite{aggarwal2023} is a global law, establishing Airy point locations at a law of large numbers level for non-intersecting line ensembles with a parabolic first curve. We believe that the methods in the present paper could give the same result in the positive temperature setting, with modifications required in order to eliminate our reliance on the modulus of continuity for the KPZ line ensemble from \cite{wu2025optimaltransport}.

\subsubsection*{Conventions Throughout the Paper}
We often suppress the dependence of objects on $t$ or other subscripts and superscripts, e.g., we will often write $\mathcal{H}_n$ instead of $\mathcal{H}_n^{(t)}$. We write $\prob_{\mathcal{F}}(A) \defeq \E [ \mathbbm{1}_A | \mathcal{F}]$ for the probability of the event $A$ conditioned on a $\sigma$-algebra $\mathcal{F}$.  Next, we use the shorthand $[a, b]_{\Z} \defeq [a, b] \cap \Z$. Finally, our constants may change from line to line within a proof.  

	\section{Preliminaries}
	
	In this section, we define $\mathbf{H}$-Brownian line ensembles and introduce the KPZ line ensemble as the edge scaling limit of the O'Connell-Yor polymer. Additionally, we define the multi-layer stochastic heat equation and clarify its precise relation to the KPZ line ensemble.

	\subsection{$\mathbf{H}$-Brownian Gibbs property and line ensembles} 
	\label{S:gibbs-section}
	\begin{defn} 
		Consider intervals $\Sigma \subset \N$ and $\Lambda \subset \R$. Let $C(\Sigma \times \Lambda, \R)$ be the set of continuous functions from $\Sigma \times \Lambda \to \R$ endowed with the topology of uniform convergence on compact sets. For $f \in C(\Sigma \times \Lambda, \R)$, we write $f_i(x) = f(i, x)$ and view $f$ as a sequences of functions $f_i, i \in \Sigma$.
		
		A \textbf{$(\Sigma \times \Lambda)$-indexed line ensemble $\Lag$} is a random variable taking values in $C(\Sigma \times \Lambda, \R)$. 
	\end{defn}
	
	Now fix $k_1 < k_2 \in \N, [a, b] \subset \R$ and two vectors $\vec{x}, \vec{y} \in \R^{k_2 - k_1 + 1}$. We define the \textbf{free Brownian bridge measure} $\prob_{\textrm{free}}^{k_1, k_2, [a, b], \vec{x}, \vec{y}}$ be the law of $k_2 - k_1 +1$ independent Brownian bridges $B = (B_{k_1}, \dots, B_{k_2}) 
	\in  C(\{ k_1 , \ldots , k_2\} \times [a, b])$ with $B_i(a) = x_i, B_i(b) = y_i$. We simply write $\prob_{\textrm{free}}$ (and similarly $\E_{\textrm{free}}$ when the choices of $k_1, k_2, [a, b], \vec{x}, \vec{y}$ are clear from context. $\Ham$-Brownian line ensembles are constructed by tilting the free Brownian bridge measure through an interaction Hamiltonian $\mathbf{H}$. 
    
	\begin{defn} \label{defn:HBMelon_defn}
		Consider a measurable function $\mathbf{H}: [-\infty, \infty) \to [0,  \infty]$ with $\mathbf{H}(-\infty) = 0$, referred to as the \textbf{Hamiltonian}, together with lower index and upper index boundary conditions $f, g:[a, b] \to \R \cup \{\pm \infty\}$. For $h \in C(\{ k_1 , \ldots , k_2\} \times [a, b])$ define its \textbf{Boltzmann weight} with respect to $\Ham, f, g$ by
		\begin{align*}
			W_{\mathbf{H}}^{f, g} (h) \defeq \exp \left(- \sum_{i = k_1-1}^{k_2} \int_{a}^{b} \mathbf{H} \circ \Delta_i h (s) \, ds  \right), \qquad \text{ where } \qquad \Delta_i h = h_{i+1} - h_i.
		\end{align*}
		where $h_{k_1 - 1} = f, h_{k_2 + 1} = g$. 
		We define the \textbf{$\Ham$-Brownian bridge measure} $\prob_{\mathbf{H}}^{f, g} = \prob_{\mathbf{H}}^{k_1, k_2, [a, b], \vec{x}, \vec{y}, f, g}$ on $C(\{ k_1 , \ldots , k_2\} \times [a, b])$ by the Radon-Nikodym derivative formula
		\begin{align*}
			\frac{d \prob_{\mathbf{H}}^{f, g} }{d \prob_{\textrm{free}}} (h) \defeq \frac{1}{Z^{f, g}_\Ham} W_{\mathbf{H}}^{f, g} (h), 
		\end{align*}
		where $Z^{f, g}_\Ham = Z_{\mathbf{H}}^{k_1, k_2, (a, b), \vec{x}, \vec{y}, f, g}$ is the partition function
		\begin{align}
			Z_{\mathbf{H}}^{f, g} \defeq \E_{\textrm{free}} [W_{\Ham}^{f, g} (B)].
		\end{align}
		We say $\Lag$ is an \textbf{$\Ham$-Brownian line ensemble} if $\Lag \sim \prob_\Ham^{f, g}$.
	\end{defn}
	
	For the purposes of our analysis, it is useful to extend the definition of an $\Ham$-Brownian line ensemble to allow for a vector of Hamiltonians, $\vec{\Ham} = (\Ham_{k_{1} - 1} , \ldots , \Ham_{k_2})$. All parts of our definition are kept the same except we simply redefine the Boltzmann weight as:
	\begin{align}
		W_{\mathbf{H}}^{f, g} (h) \defeq \exp \left(- \sum_{i = k_1-1}^{k_2} \int_{a}^{b} \mathbf{H}_i \circ \Delta_i h(s) \, ds  \right).
	\end{align}
	We are now ready to define the $\Ham$-Brownian Gibbs property. 
	\begin{defn} Consider a Hamiltonian $\Ham$, intervals $\Sigma \subset \N$ and $\Lambda \subset \R$, and a $(\Sigma \times \Lambda)$-indexed line ensemble $\Lag$. We say $\Lag$ satisfies the $\Ham$-Brownian Gibbs property if the following property holds for every $k_1 \le k_2 \in \Sigma$ and $[a, b] \subset \Lambda$.
		
		Set $\vec{x} = (\Lag_{k_1}(a) , \ldots ,\Lag_{k_2}(a))$,  $\vec{y} = (\Lag_{k_1}(b) , \ldots , \Lag_{k_2}(b))$ and set $f=  \Lag_{k_1 - 1}$ and $g = \Lag_{k_2 + 1}$ (if $k_1 - 1 \not\in \Sigma$, then let $f = + \infty$ and if $k_2 + 1 \not\in \Sigma$, let $g = - \infty$; since $\Ham(-\infty) = 0$ this amounts to dropping the term involving $f$ or $g$ in the Boltzmann weight). Define the exterior $\sigma$-algebra $\mathcal{F}_{\textrm{ext}}$ by:
		\begin{align}
			\mathcal{F}_{\textrm{ext}} \defeq \sigma \big\{    \Lag \big|_{\Sigma \times \Lambda \setminus (\{k_1, \dots, k_2\} \times [a, b])}  \big\}
		\end{align}
		Then the conditional law of $\Lag|_{[k_1, k_2]_{\Z} \times (a, b)}$ given $\mathcal{F}_{\textrm{ext}}$ is $\prob_{\Ham}^{k_1, k_2, [a, b], \vec{x}, \vec{y}, f, g}$.

	\end{defn}
	
	It is straightforward to check that for a compact domain $\Sigma \times \Lambda$, a $(\Sigma \times \Lambda)$-indexed line ensemble satisfies the $\Ham$-Brownian Gibbs property if and only if it is a $\Ham$-Brownian line ensemble. However, when we move to non-compact domains, it is no longer possible to directly define a notion of a $\Ham$-Brownian line ensemble through a Radon-Nikodym derivative. However, we can still define line ensembles with the $\Ham$-Brownian Gibbs property, and this property is extremely useful for analysis of these objects.
	
	We note in passing that the non-intersecting Brownian Gibbs property for the Airy line ensemble can be defined through the Hamiltonian $\Ham_{\textrm{ord}} (x) = \infty \mathbf{1}(x \geq 0)$.
	
	\subsection{The O'Connell-Yor and KPZ line ensembles} \label{subsec:KPZ_Line_Ensemble_defn}
	
	The KPZ line ensemble is the scaling limit of a line ensemble connected to the O'Connell-Yor polymer. As there is some inconsistency regarding the definition of the KPZ line ensemble in the literature, we precisely define both the O'Connell-Yor polymer and the scaling limit to the KPZ line ensemble here. 
	
	\begin{defn}
		For $a \le b \in \R$ and $n \le m \in \Z$ an \textbf{up-right path} $\phi$ from $(a, n)$ to $(b, m)$ is a non-decreasing, cadlag, function from $[a, b]$ to $[n, m]_\Z$. Paths from $(a, n)$ to $(b,m)$ are in bijection with sequences $a = s_n \le s_{n+1} \le \cdots \le s_{m-1} \le b$, defined by setting $s_i = \inf \{s \in [a, b] : \phi(s) > i\}$. We call the times $s_i$ \textbf{jump times} for the path $\phi$. We say that a $k$-tuple of paths $\phi = (\phi_1, \dots, \phi_n)$ from $(a, n)$ to $(b,m)$ is \textbf{non-intersecting} if $\phi_1 > \phi_2 > \cdots > \phi_n$.

		Next, let $B = \{ B_i \}_{i = 1}^{\infty}$ be an infinite sequence of independent standard Brownian motions, let $\phi$ be a path from $(0, 1)$ to $(s, N)$ for some $s \ge 0, N \in \N$ and define the energy of $\phi$ by:
		\begin{align}
			E(\phi) &= B_1(s_1) + (B_2(s_2) - B_2(s_1)) + \cdots + (B_{N}(s) - B_{N}(s_{N-1})).
		\end{align}
		For $n \in \{1, \dots, N\}$ and $t > 0$ define the $n$-path \textbf{O'Connell-Yor polymer partition function}
		\begin{align}
			Z^{N}_{n}(t) &= \int_{D_{n, t}} e^{\sum_{i = 1}^n E(\phi_i)} d \mu(\phi).
		\end{align}
		Here $D_{n, t}$ is the space of non-intersecting $n$-tuples of paths $\phi$ from $(0, 1)$ to $(t, N)$. Using the correspondence between paths and jump times, we can view $D_{n, t}$ as a subset of $(\R^N)^n$. The measure $\mu$ is simply Lebesgue measure on $(\R^N)^n$. Next, define the \textbf{O'Connell-Yor line ensemble} $X^N$ by
		\begin{align}
			X^{N}_{n}(s) = \log \big(  \frac{Z^{N}_n(s)}{Z^{N}_{n - 1} (s)} \big), \qquad (n, s) \in [1, N]_{\Z} \times (0, \infty).
		\end{align} 
		with the convention that $Z^N_{0} = 1$. 
	\end{defn}	
	
	O'Connell \cite{o2012directed} demonstrated that the process $X^n$ has a description as a Doob$-h$ transform, and identified the generator for the process. From this description, Corwin and Hammond \cite{corwin2016kpz} demonstrated that the O'Connell-Yor line ensemble possesses an $\mathbf{H}$-Brownian Gibbs property (this can also be extracted from the earlier work of Katori \cite{katori2011connell, katori2012survival}).
	
	\begin{lemma}[Proposition 3.4, \cite{corwin2016kpz}]
		The O'Connell-Yor line ensemble $X^n$ possesses an $\Ham$-Brownian Gibbs property with the Hamiltonian $\Ham(x) = e^x$. 
	\end{lemma}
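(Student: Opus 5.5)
The plan is to derive the Gibbs property from O'Connell's description of $X^N$ as a Doob $h$-transform of Brownian motion, and then to identify its finite-dimensional transition densities with a Feynman--Kac tilt of free Brownian motion.

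\textbf{Step 1: generator and ground state.} By \cite{o2012directed}, $X^N$ is a diffusion with generator
\[
\mathcal{L} = \tfrac12 \psi_0^{-1}\Big(\Delta - 2\sum_{i=1}^{N-1} e^{x_{i+1}-x_i}\Big)\psi_0 ,
\]
where $\psi_0$ is a $GL(N)$ Whittaker function. Up to the eigenvalue normalization, $\psi_0$ is a positive eigenfunction of the quantum Toda Hamiltonian $H = \Delta - 2\sum_i e^{x_{i+1}-x_i}$ (the initial condition from the narrow wedge is handled by the entrance law). By Feynman--Kac, the transition density of $X^N$ from $x$ to $y$ over an interval $[a,b]$ is
\[
\frac{\psi_0(y)}{\psi_0(x)}\; p^{\mathrm{free}}_{b-a}(x,y)\;
\E_{\mathrm{free}}^{x\to y}\Big[\exp\Big(-\sum_i\int_a^b e^{B_{i+1}-B_i}\Big)\Big],
\]
with a harmless multiplicative constant coming from the eigenvalue. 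In particular, conditional on $X^N(a)=x$ and $X^N(b)=y$, the path law on $[a,b]$ is free Brownian bridges reweighted by $W_\Ham$ with $\Ham(x)=e^x$. The $h$-factors $\psi_0(y)/\psi_0(x)$ cancel in the bridge law, by the Markov property.

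\textbf{Step 2: reduce to sub-rectangles.} The claimed property concerns the lines $k_1,\dots,k_2$ only, with $f = X_{k_1-1}$ and $g = X_{k_2+1}$ held fixed. By Step 1, the joint law of all lines on $[a,b]$, given their endpoints and everything outside $[a,b]$, has density proportional to $W_\Ham$ over all $N$ lines with respect to free bridges. Conditioning further on the lines outside $[k_1,k_2]$ inside $[a,b]$, the Boltzmann weight factorizes. The terms not involving lines $k_1,\dots,k_2$ become constants, the independent free bridges factor, and what remains is exactly $W_\Ham^{f,g}$ over lines $k_1..k_2$. This gives $\prob_\Ham^{k_1,k_2,[a,b],\vec x,\vec y,f,g}$. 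Here we use that the Markov property in time makes the conditional law on $[a,b]$ depend only on the values at $a$ and $b$.

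\textbf{Main obstacle.} The delicate points are justifying the Feynman--Kac representation for the unbounded potential, and the behaviour at time $0$, where all lines start from $-\infty$ (entrance law). Working on $[a,b]$ with $a>0$ avoids time $0$. For the potential, I would truncate it by $\min(e^x,M)$ and use monotone convergence.
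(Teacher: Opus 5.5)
Your proposal is correct and is essentially the argument the paper points to. The paper gives no proof of its own; it cites Corwin--Hammond's derivation from O'Connell's Doob $h$-transform description, and that derivation is your route: Feynman--Kac identifies the bridge law of the full ensemble on $[a,b]$ as Brownian bridges tilted by $\exp(-\sum_i\int_a^b e^{X_{i+1}-X_i})$, and factorizing this weight passes the property to the sub-rectangle $[k_1,k_2]_\Z\times[a,b]$.

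One small refinement to your ``main obstacle'': truncating the potential is not the right fix. Truncation destroys the harmonicity of $\psi_0$, and the Feynman--Kac weight is already well defined for the nonnegative potential. The substantive input is that $\psi_0$ is invariant, rather than merely excessive, for the killed semigroup, i.e.\ the $h$-process does not explode; this is supplied by O'Connell's result.
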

	
	We define the KPZ line ensemble as a scaling limit of the O'Connell-Yor line ensemble.
	
	\begin{thm}
		For $N \in \N, t > 0,$ and $x \in \R$ let
		\begin{align}
			C(N, t, x) \defeq (t^{1/2} N^{-1/2})^{N-1} \exp \Big(  N + \frac{\sqrt{tN} +x}{2} + xt^{-1/2} N^{1/2}\Big)
		\end{align}
		and define
		\begin{align}
			\mathcal{H}^{t, N}_{n} (x) \defeq X^{N}_n (\sqrt{tN} + x) - \log \big( C(N , t, x) \big)
		\end{align}
		Then as $N \to \infty$, the process $\mathcal{H}^{t, N}$ converges in distribution to an $(\N\times \R)$-indexed line ensemble $\mathcal H^{(t)}$, where the underlying topology is uniform-on-compact convergence on subsets of $\N \times \R$. The process $\mathcal H^{(t)}$ is the \textbf{KPZ$_t$ line ensemble}, and as with the O'Connell-Yor line ensembles, $\mathcal H^{(t)}$  possesses an $\Ham$-Brownian Gibbs property with the Hamiltonian $\Ham(x) = e^x$. Finally, the process $(x, n) \mapsto \mathcal H^{(t)}_n(x) + x^2/(2t)$ is stationary in $x$.
	\end{thm}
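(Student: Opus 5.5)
The plan is to assemble the statement from three existing inputs: tightness from \cite{corwin2016kpz}, identification of finite-dimensional limits from \cite{Nica_2021}, and the symmetries of the multi-layer stochastic heat equation \cite{O_Connell_2015}. The only genuinely new work is bookkeeping, namely checking that our normalization $C(N,t,x)$ matches theirs.

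\textbf{Step 1: tightness and the Gibbs property.} Corwin and Hammond prove that, under an affine recentring of $X^N$ around time $\sqrt{tN}$, the O'Connell--Yor line ensemble is tight in the uniform-on-compact topology on $\N\times\R$. Their argument uses only the $\Ham$-Brownian Gibbs property (Proposition 3.4 of \cite{corwin2016kpz}) and one-point tightness of the top curve. I would first check that our $\log C(N,t,x)$ differs from their centring by a deterministic function that is affine in $x$ and converges as $N\to\infty$. Adding an affine function to all lines simultaneously preserves the $\Ham$-Brownian Gibbs property, since the Hamiltonian sees only differences and Brownian bridges are invariant under affine tilts. So tightness of $\mathcal H^{t,N}$ follows.

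Any subsequential limit inherits the $\Ham$-Brownian Gibbs property with $\Ham(x)=e^x$. I would obtain this by the standard argument: for a fixed rectangle, write the resampling identity as an equality of expectations of bounded continuous functionals. The Boltzmann weight $W_\Ham^{f,g}$ and the partition function $Z_\Ham^{f,g}$ depend continuously on the boundary data and endpoints, and $Z_\Ham^{f,g}$ is strictly positive since $\Ham<\infty$ on $\R$. Hence the identity passes to the limit by weak convergence, using Skorokhod representation.

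\textbf{Step 2: uniqueness of the limit.} To show all subsequential limits agree, I would invoke \cite{Nica_2021}. There the rescaled O'Connell--Yor partition functions $Z^N_n(\sqrt{tN}+x)$, after dividing by an explicit deterministic factor, are shown to converge in finite-dimensional distributions, jointly in $n$ and $x$, to the multi-layer SHE partition functions $\mathcal Z^t_n(x)$ of \eqref{E:wick}. Taking logarithms of ratios gives
\[
\mathcal H^{t,N}_n(x) \to \log\bigl(\mathcal Z^t_n(x)/\mathcal Z^t_{n-1}(x)\bigr)+\log\bigl(t^{1-n}(n-1)!\bigr),
\]
which is \eqref{E:Htn}. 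Here I must match the exponential factors in $C(N,t,x)$, including the Stirling-type term $(t^{1/2}N^{-1/2})^{N-1}e^{N}$, with Nica's normalization; this is where the $\log(t^{1-n}(n-1)!)$ constant appears. Finite-dimensional convergence together with tightness yields convergence of the whole ensemble.

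\textbf{Step 3: stationarity.} From \eqref{E:wick}, $\mathcal Z^t_n(x)/p(t,x)^n$ is an expectation over $n$ non-intersecting Brownian bridges from $0$ to $x$ of a Wick exponential of white noise. The shear map $(s,y)\mapsto(s,y+sx/t)$ sends bridges ending at $0$ to bridges ending at $x$ and preserves the law of $\xi$. Hence the process $x\mapsto \mathcal Z^t_n(x)/p(t,x)^n$, jointly in $n$, is stationary. Since $\log p(t,x)=-x^2/(2t)-\tfrac12\log(2\pi t)$, the $n$-dependence cancels in the ratio, and $\mathcal H_n^{(t)}(x)+x^2/(2t)$ is stationary jointly in $n$.

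The main obstacle I expect is the normalization matching in Steps 1 and 2. The statement is only correct if the scaling conventions of \cite{corwin2016kpz} and \cite{Nica_2021} are reconciled exactly with $C(N,t,x)$, including the linear-in-$x$ terms $x/2+xt^{-1/2}N^{1/2}$ that produce the homogeneous Gibbs property and the centred parabola. I would handle this by expanding $\log C$ and the Brownian scaling of $E(\phi)$ term by term.
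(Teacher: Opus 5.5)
Your proposal is correct and follows the paper's route. The paper simply cites \cite{corwin2016kpz} for tightness and the limiting $\Ham$-Brownian Gibbs property, and \cite{Nica_2021} for uniqueness of the limit and stationarity, which is exactly your Steps 1 and 2. Your Step 3 only makes explicit the shear-invariance argument for the multi-layer SHE behind the stationarity claim, and your normalization check is trivial here, since $C(N,t,x)$ is exactly the centring used in \cite{Nica_2021} and in the published version of \cite{corwin2016kpz}.
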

	
	The tightness of $\mathcal{H}^{t, N}$ and the limiting Gibbs property (which is an easy consequence of tightness) was shown in \cite{corwin2016kpz}. Uniqueness of the limit point and the resulting stationarity was proven by Nica \cite{Nica_2021}.

	\begin{remark}
		Note that our definition of the KPZ line ensemble as the scaling limit of the line ensembles $\mathcal{H}^{t, N}$ coincides with that of \cite{Nica_2021} and the originally published version of \cite{corwin2016kpz}. However, in the most recent arXiv version of \cite{corwin2016kpz}, the KPZ line ensemble was redefined as the scaling limit $\tilde {\mathcal H}^t$ of the line ensembles $\tilde{\mathcal{H}}^{t, N}$, given
		\begin{align} \label{modified_ensemble}
			\tilde{\mathcal{H}}^{t, N}_n(x) = \mathcal{H}^{t, N}_n(x) - \log \big( t^{1 - n} (n - 1)! \big).
		\end{align}
		The scaling limits differ by the line index dependent shift $\log \big(t^{1 - n} (n - 1)! \big)$. The advantage of working with $\tilde {\mathcal H}^t$ is that it yields a simpler connection to the multi-layer stochastic heat equation due to \eqref{E:Htn}. However, introducing the line-index-dependent shift does not preserve the $\mathbf{H}$-Brownian Gibbs property with $\mathbf{H}(x) = e^x$ and instead results in a line ensemble with a Gibbs property with a vector of Hamiltonians of changing strength from line to line.
	\end{remark}

	\subsection{Key Inputs}
		
	\subsubsection{Monotonicity for $\Ham$-Brownian line ensembles}
	
	The first two useful lemmas are from \cite{corwin2016kpz} and concern stochastic monotonicity of $\Ham$-Brownian line ensembles with convex Hamiltonians $\Ham$. 
	The case of vector Hamiltonians $\vec{\Ham}$ is not proven in \cite{corwin2016kpz}, but the lemmas still hold in the vector Hamiltonian case and the proofs are almost identical. For completeness, we include proofs for vector-valued Hamiltonians in the appendix. 
	
	\begin{lemma}[Non-vector Hamiltonian case given in Lemma 2.6 of \cite{corwin2016kpz}] \label{lemma:stoch_dom1}
		Fix $k_1 < k_2 \in \N$, $a < b \in \R$, and two vectors $\vec{x}, \vec{y} \in \R^{k_2 - k_1 + 1}$. Fix either a convex Hamiltonian $\Ham$, or a vector of convex Hamiltonians $\Ham = (\Ham_{k_1 -1}, \ldots , \Ham_{k_2})$. Fix two pairs of measurable functions $f^{i} , g^{i}:[a, b] \to \R \cup \{\pm \infty\}$ for $i = 1, 2$ such that $f^{1} \leq f^{2}$ and $g^{1} \leq g^{2}$. 
		
		Let $\Lag^{(i)}$ for $i = 1,2$ be two $\Ham$-Brownian line ensembles distributed according to $\prob_{\Ham}^{(i)} \defeq \prob_{\Ham}^{\vec{x}, \vec{y}, [a, b], f^{i}, g^{i}}$. Then there exists a coupling of $\Lag^{(1)}, \Lag^{(2)}$ so that $\Lag^{(1)}_{j}(x) \leq \Lag^{(2)}_j(x)$ for all $j \in [k_1, k_2]_{\Z}$, $x \in [a, b]$.
	\end{lemma}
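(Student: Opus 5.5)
The plan is the standard Corwin--Hammond route: discretize Brownian bridges into random walks with Gaussian increments, construct a monotone Markov chain whose stationary law is the discretized $\Ham$-Brownian bridge measure, couple the chains started from ordered configurations so that order is preserved at every step, and pass to the limit. Concretely, fix a mesh size $\delta = (b-a)/M$ and consider the discrete measure on $(h_j(a + \ell\delta))_{j \in [k_1,k_2]_\Z,\, \ell = 1,\dots,M-1}$. Its density is proportional to
\begin{align*}
\prod_{j,\ell} \exp\Big(-\frac{(h_j(x_{\ell+1}) - h_j(x_\ell))^2}{2\delta}\Big) \exp\Big(-\delta \sum_{i = k_1 -1}^{k_2} \sum_\ell \Ham_i(h_{i+1}(x_\ell) - h_i(x_\ell))\Big),
\end{align*}
with the endpoints fixed to $\vec x, \vec y$ and $h_{k_1-1} = f^{m}$, $h_{k_2+1} = g^{m}$ evaluated at the grid points. (If $f,g$ are only measurable, we first approximate by step or continuous functions, or evaluate on a grid where the Riemann sums converge; I would take $f,g$ continuous first and handle general measurable boundaries by monotone approximation at the end.)

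The key step is to run the heat-bath (Gibbs sampler) dynamics that resamples one coordinate $h_j(x_\ell)$ from its conditional law given all others, using a common uniform variable and inverse-CDF coupling for both chains $m=1,2$. The conditional density of $u = h_j(x_\ell)$ is proportional to $\exp(-\phi(u))$ with
\begin{align*}
\phi(u) = \frac{(u - h_j(x_{\ell-1}))^2 + (h_j(x_{\ell+1}) - u)^2}{2\delta} + \delta\,\Ham_{j-1}(u - h_{j-1}(x_\ell)) + \delta\,\Ham_{j}(h_{j+1}(x_\ell) - u).
\end{align*}
One must check that raising any neighbour, including $f$ or $g$, stochastically raises this conditional law. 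For the quadratic terms this is immediate. For the interaction terms the argument is this: because $\Ham_{j-1}$ and $\Ham_j$ are convex, the log-density ratio between raised and unraised neighbours is monotone in $u$, i.e.\ a monotone likelihood ratio. Raising $h_{j-1}$ replaces $\Ham_{j-1}(u - c)$ by $\Ham_{j-1}(u-c')$ with $c'>c$, and convexity makes $\Ham_{j-1}(u-c) - \Ham_{j-1}(u-c')$ nondecreasing in $u$. The same holds on the other side. Monotone likelihood ratio gives stochastic domination, so the inverse-CDF coupling preserves $h^{(1)} \le h^{(2)}$ coordinatewise. This is where convexity of each $\Ham_i$ is used, and the vector case changes nothing because each term involves only one $\Ham_i$. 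Values $\Ham_i = \infty$ and boundaries equal to $\pm\infty$ need care: either truncate $\Ham_i$ to $\min(\Ham_i, K)$ and let $K\to\infty$, or note directly that the likelihood-ratio argument survives with indicator factors.

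Next comes the limit argument. Start chain $1$ at a configuration below the start of chain $2$, say both flat at $\mp R$ and large. The chains are irreducible and aperiodic on $\R^{\text{grid}}$ with the discrete measures as stationary laws, so each converges in total variation. Ordered couplings at each time therefore yield, via tightness and Skorokhod/Strassen, an ordered coupling of the two stationary discrete measures.

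Finally, let $\delta \to 0$. The interpolated discrete measures converge weakly to $\prob_\Ham^{(m)}$, since the random walk bridges converge to Brownian bridges and the Riemann sums of $\Ham_i$ converge to the integrals. The set $\{(\Lag^{(1)}, \Lag^{(2)}) : \Lag^{(1)} \le \Lag^{(2)}\}$ is closed, so ordered couplings persist in the weak limit. I expect the main obstacle to be this last convergence step when $\Ham$ is unbounded or discontinuous, or when $f,g$ are merely measurable or infinite. The weight $W$ is then not a continuous bounded functional. I would handle it by proving the lemma first for bounded continuous convex truncations $\Ham_i^{K}$ with continuous $f,g$, then using monotone convergence of Boltzmann weights in $K$ and in the boundary approximation to get weak convergence of the measures, assuming the partition functions stay positive. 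Closedness of the ordering carries the coupling through.
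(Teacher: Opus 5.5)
Your argument is correct for finite-valued convex Hamiltonians. That is all the paper's appendix proof treats, and all that $\Ham(x)=e^x$ requires. Your route is a variant of the same Corwin--Hammond strategy the paper uses: an order-preserving Markov chain on a discretization, with convexity supplying the order preservation, followed by two limits.

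The implementations differ. The paper replaces each line by a simple random walk bridge with steps $\pm 2n^{-1}$ on the mesh $n^{-2}\Z$, and runs Metropolis flips of local minima/maxima driven by a shared uniform. Order can only break at a site where the two configurations agree, and there convexity gives $R^1\le R^2$. You keep Gaussian increments, use heat-bath updates with inverse-CDF coupling, and prove a full monotone-likelihood-ratio comparison of the one-site conditional laws.

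The paper's choice buys a finite state space, so convergence to stationarity is automatic. Yours needs a Harris-ergodicity input for a continuous-state Gibbs sampler, for instance a systematic scan, whose one-sweep kernel has an everywhere positive density. In exchange, your discrete law is exactly the grid marginal of the $[a,b]$, multi-line analogue of the measure $\prob_{\Ham_N}$ with Riemann-sum weight $W_{\Ham_N}$ from Section 3. Conditionally on grid values, the lines under that measure are independent Brownian bridges between grid points. Adding the same bridge fluctuations to both ensembles therefore lifts your ordered coupling to full paths. The $\delta\to 0$ step is then just the weak convergence $\prob_{\Ham_N}\to\prob_{\Ham}$ already noted there, with no invariance principle needed.

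The one step that would fail as written is your treatment of extended-valued or discontinuous $\Ham_i$. Truncating to $\min(\Ham_i,K)$ destroys convexity: $\min(e^x,K)$ has a concave kink at $x=\log K$. In fact a bounded convex function on $\R$ is constant, so ``bounded continuous convex truncations'' do not exist. For the truncated Hamiltonian, the increment $\Ham^K(v+\Delta)-\Ham^K(v)$ is no longer nondecreasing in $v$. That monotonicity is exactly what your likelihood-ratio step uses, so your argument does not apply to the truncated problem.

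If you want this generality (which goes beyond the paper's proof), approximate from below by finite convex functions instead. For example, $\Ham_i^K(x)=\inf_y\{\Ham_i(y)+K|x-y|\}$ is convex, $K$-Lipschitz, and increases to $\Ham_i$ when $\Ham_i$ is lower semicontinuous. The Boltzmann weights then decrease monotonically to the limit, and the partition functions converge by bounded convergence. When the limiting partition function is positive, Scheff\'e's lemma gives total-variation convergence, which carries the ordered coupling through. Boundedness of $W$ was never the issue, since $W\le 1$ always holds because $\Ham\ge 0$; only its continuity was.
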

	
	\begin{lemma}[Non-vector Hamiltonian case given in Lemma 2.7 of \cite{corwin2016kpz}]  \label{lemma:stoch_dom2}
		Fix $k_1 < k_2 \in \N$, $a < b \in \R$, and a pair of measurable functions $f, g:[a, b] \to \R \cup \{\pm \infty\}$. Fix either a convex Hamiltonian $\Ham$, or a vector of convex Hamiltonians $\Ham = (\Ham_{k_1 -1}, \ldots , \Ham_{k_2})$. Fix two pairs of vectors $\vec{x}^{(i)}, \vec{y}^{(i)} \in \R^{k_2 - k_1 + 1}$ for $i = 1, 2$ where $x^{(1)}_j \leq x^{(2)}_j$ and $y^{(1)}_j \leq y^{(2)}_j$.
		
		Let $\Lag^{(i)}$ for $i = 1,2$ be two $\Ham$-Brownian line ensembles distributed according to $\prob_{\Ham}^{(i)} \defeq \prob_{\Ham}^{\vec{x}^{(i)}, \vec{y}^{(i)}, [a, b], f, g}$. Then there exists a coupling of $\Lag^{(1)}, \Lag^{(2)}$ so that $\Lag^{(1)}_{j}(x) \leq \Lag^{(2)}_j(x)$ for all $j \in [k_1, k_2]_{\Z}$, $x \in [a, b]$.
	\end{lemma}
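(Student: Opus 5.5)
The plan is to follow the Markov chain coupling scheme of \cite{corwin2016kpz}. First I would prove the statement for a discretized version of $\prob_{\Ham}^{(i)}$ by constructing a monotone coupling of two single-site heat-bath (Gibbs sampler) chains. Then I would pass to the continuum limit as the mesh tends to zero. Throughout, $f, g$ are held fixed and only the endpoint vectors differ.

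\textbf{Discretization.} Fix a mesh $\delta = (b-a)/M$ and grid points $s_k = a + k\delta$, $k = 0, \dots, M$. Consider the measure on $\{h_j(s_k): j \in [k_1,k_2]_\Z,\ 1 \le k \le M-1\}$ with the boundary values $h_j(s_0) = x^{(i)}_j$ and $h_j(s_M) = y^{(i)}_j$ frozen, and with density proportional to
\[
\exp\Big(-\sum_{j,k}\frac{(h_j(s_{k+1})-h_j(s_k))^2}{2\delta} - \delta\sum_{j=k_1-1}^{k_2}\sum_{k} \Ham_j\big(h_{j+1}(s_k)-h_j(s_k)\big)\Big),
\]
where $h_{k_1-1} = f$ and $h_{k_2+1} = g$ are evaluated at the grid points. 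To avoid technicalities at the grid points I would first replace $f, g$ by continuous approximations (or mollified versions), and truncate $\Ham_j$ to be finite, for instance $\min(\Ham_j, L)$, which is still convex on the region where it is finite after a suitable modification. The case of infinite $\Ham$ would be recovered in the last step.

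\textbf{Monotone coupling.} The key point is that, given everything else, the conditional law of the single coordinate $u = h_j(s_k)$ has log-density
\[
-\frac{(u-h_j(s_{k-1}))^2 + (h_j(s_{k+1})-u)^2}{2\delta} - \delta\Ham_{j-1}\big(u - h_{j-1}(s_k)\big) - \delta\Ham_j\big(h_{j+1}(s_k)-u\big).
\]
Convexity of each $\Ham_j$ makes this log-supermodular in $u$ and each of its neighbours $h_j(s_{k\pm1})$, $h_{j\pm1}(s_k)$. Concretely, for $u<u'$ the difference $\Ham(u'-v)-\Ham(u-v)$ is nonincreasing in $v$, and the Gaussian cross term has the right sign. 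Hence the conditional law is increasing in the monotone-likelihood-ratio sense, and therefore stochastically increasing, in the neighbouring values.

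I would run two heat-bath chains, one for each $i$, updating the same site with the same uniform variable via inverse CDFs. Start the first chain at a very low configuration and the second at a very high one. The ordering $h^{(1)} \le h^{(2)}$ is preserved at every step, because the frozen boundary values satisfy $x^{(1)} \le x^{(2)}$ and $y^{(1)} \le y^{(2)}$ and the shared $f, g$ are identical. Each chain is an irreducible aperiodic Harris chain with the target measure as its stationary law. Taking a subsequential limit in law of the pair (tightness follows from the Gaussian confinement) gives a coupling of the two discrete stationary measures supported on $\{h^{(1)}\le h^{(2)}\}$.

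\textbf{Continuum limit.} Linearly interpolate the discrete configurations. As $\delta \to 0$, the discrete measures converge weakly to $\prob_{\Ham}^{(i)}$. The Gaussian part converges to independent Brownian bridges. The Riemann sums $\delta\sum_k \Ham_j(\cdot)$ converge to $\int \Ham_j\circ\Delta_j h$ for continuous paths, since $\Ham_j$ is continuous on the interior of its domain after truncation. The Boltzmann weights are bounded by $1$, so the partition functions converge by dominated convergence provided the limiting $Z>0$. The couplings are tight, and the event $\{\Lag^{(1)}_j(x)\le \Lag^{(2)}_j(x)\ \forall j,x\}$ is closed in the uniform topology, so it survives the weak limit. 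Finally I would remove the truncations of $\Ham_j$, $f$ and $g$ by a monotone limit argument in the same way.

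I expect this last step to be the main obstacle. The difficulty is justifying convergence of the Riemann-sum energies, and hence of the Radon–Nikodym derivatives, for merely measurable $f, g$ and for convex $\Ham$ that may take the value $+\infty$ (as in $\Ham_{\rm ord}$). The single-site monotonicity itself is a short convexity computation.
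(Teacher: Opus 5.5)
Your proposal is essentially the paper's argument. The appendix also runs the Corwin--Hammond monotone Markov-chain coupling on a discretization and then passes to the limit; the only difference is that it uses $\pm 2n^{-1}$ random-walk bridges with Metropolis updates driven by a shared uniform (started from the lowest configurations) where you use a Gaussian grid with heat-bath/inverse-CDF updates, and order preservation comes from the same convexity inequality for $\Ham_{j-1}$ and $\Ham_j$ that gives your monotone likelihood ratio.

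One correction: $\min(\Ham_j,L)$ is not convex in general (e.g.\ $\min(e^x,L)$), so to cover $+\infty$-valued Hamiltonians use a convexity-preserving approximation such as $\inf_y\{\Ham_j(y)+L|x-y|\}$; for finite convex $\Ham_j$, the only case the paper treats or needs, no truncation is required.
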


	\subsubsection{Properties of the KPZ line ensemble}
	
	The follow result concerns the single point upper and lower tails of the KPZ Equation, or equivalently the top curve of the KPZ line ensemble.
	
	\begin{lemma}[Theorem 1.1 of \cite{Corwin_2020_lowertail}, Theorem 1.11 of \cite{corwin2020kpzequationtailsgeneral}] \label{lemma:kpzeqn_tails} Fix $t_0> 0$. Then there exists $d > 0$ depending only on $t_0$ such that for all $t > t_0$ and $m > 0$:
		\begin{align*}
			\prob_{\Ham} (\mathcal H^{(t)}(0) + \frac{t}{24} \leq -m t^{1/3}) & \leq 2 \exp \big(   -d m^{5/2} \big), \\
			\prob_{\Ham} (\mathcal H^{(t)}(0) + \frac{t}{24} \geq m t^{1/3}) & \leq 2 \exp \big(   -d m^{3/2} \big).
		\end{align*}
	\end{lemma}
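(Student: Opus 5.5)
This lemma is a restatement of known tail bounds for the narrow-wedge KPZ equation. The plan is to reduce it to the cited theorems and then adjust constants.

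The first step is to identify the top curve with the Cole--Hopf solution in the normalization used by \cite{Corwin_2020_lowertail, corwin2020kpzequationtailsgeneral}. By \eqref{E:Htn} with $n=1$, the constant $\log(t^{0}\,0!)$ vanishes and $\mathcal Z^t_0 = 1$. Hence $\mathcal H^{(t)}_1(0) = \log \mathcal Z^{t}_1(0)$, and $\mathcal Z^t_1$ is the solution of \eqref{eqn:SHE} with $\mathcal Z(0,\cdot)=\delta_0$, by the identification of \cite{Nica_2021}. The cited papers work with the centered quantity $\mathfrak h(t,0) := \log \mathcal Z^{\mathrm{nw}}(t,0) + t/24$. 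I would check carefully that their initial-data normalization and noise convention match ours, namely the same Dirac mass and the same white noise variance, with no extra Gaussian-kernel factor. With that check done, $\mathcal H^{(t)}_1(0) + t/24$ has the law of their $\mathfrak h(t,0)$. This is consistent with $e^{(t)}_1 = t/24$.

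The second step is to quote the cited tail bounds. \cite[Theorem 1.1]{Corwin_2020_lowertail} gives the lower tail: for every $t_0>0$ there exist $s_0, c>0$ such that $\prob(\mathfrak h(t,0) \le -s t^{1/3}) \le \exp(-c s^{5/2})$ for all $s \ge s_0$ and $t \ge t_0$. \cite[Theorem 1.11]{corwin2020kpzequationtailsgeneral}, specialized to narrow-wedge data, gives the upper tail $\prob(\mathfrak h(t,0) \ge s t^{1/3}) \le \exp(-c s^{3/2})$ in the same range. If either theorem is stated only for large $s$ or with a $t$-dependent threshold, I would note that the threshold can be chosen uniformly over $t \ge t_0$; this uniformity is part of those statements.

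The third step is to cover all $m>0$ by shrinking constants. For $m \ge s_0$, the cited bounds already give the claim with $d = c$, even without the factor $2$. For $0 < m < s_0$, choose $d \le \min\{c,\ \log 2 / s_0^{5/2},\ \log 2 / s_0^{3/2}\}$. Then $2\exp(-d m^{5/2}) \ge 2e^{-\log 2} = 1$ and likewise for the exponent $3/2$, so the bounds hold trivially. Hence a single $d$, depending only on $t_0$, works for both tails.

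The main obstacle is bookkeeping rather than mathematics. One must confirm that the $t/24$ centering and the $t^{1/3}$ scaling in the cited papers match the present scaling of $\mathcal H^{(t)}$, given the inconsistencies between conventions discussed above, and that the lower-tail estimate is uniform down to $t_0$ rather than only for large $t$.
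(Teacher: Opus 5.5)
Your proposal is correct and matches the paper's approach: the paper gives no argument beyond importing these bounds from the cited theorems, and your step of shrinking $d$ so that $2e^{-dm^{p}}\ge 1$ below the threshold is exactly the bookkeeping needed to cover all $m>0$ uniformly in $t>t_0$. One caution on your first step: the cited works scale at time $2T$ with centering $T/12$, and the narrow-wedge normalization $\E\,\mathcal Z^{\mathrm{nw}}(t,0)=(2\pi t)^{-1/2}$ can introduce a deterministic $O(\log t)$ shift relative to $\mathcal H^{(t)}_1(0)+t/24$, so you should not expect exact equality in law; both discrepancies are harmless, since $|\log t|\le C(t_0)\,t^{1/3}$ on $[t_0,\infty)$ and they only enlarge the threshold $s_0$ before you shrink $d$.
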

	Note that in Theorem 1.1 of \cite{Corwin_2020_lowertail}, a stronger and much more precise lower tail bound is obtained, This was later optimized in 
\cite{tsai2022exact}. For our purposes, the crude bound above suffices.
	
	We next include a novel result of \cite{wu2025optimaltransport} which uses Cafarelli's contraction theorem from optimal transport theory to control the regularity of the lines in the KPZ line ensemble. The result in \cite{wu2025optimaltransport} applies to more general line ensembles such as $\beta$-Dyson Brownian motion and other $\Ham$-Gibbsian line ensembles, but we only include the result for the KPZ line ensemble.
	
	\begin{lemma}[Theorem 2.8 and Proposition 3.4 in \cite{wu2025optimaltransport}] \label{lemma:opt_tran} 
		Fix $t > 0$, let $\mathcal H^{(t)}$ be the KPZ line ensemble with parameter $t$. Then for all $t, n,$ and $x \in \R$, the expectation $\E |\mathcal H^{(t)}_n(x)| < \infty$ and $\E \mathcal H^{(t)}_n(x) = \E \mathcal H^{(t)}_n(0) - \tfrac{x^2}{2t}$. Moreover, if we let $\hat{\mathcal{H}}^t_n(x) = \mathcal H^{(t)}_n(x)+ \tfrac{x^2}{2t}$, then there exists a universal constants $d > 0$ such that for all $[a, b] \subset \R$, $j \in \N$ and $m > 0$ we have:
		\begin{align*}
			\prob \Big( \sup_{x,y \in [a, b] , \, t \neq s}   \frac{|\hat{\mathcal{H}}^t_j(x) - \hat{\mathcal{H}}^t_j(y) |}{\sqrt{|x - y| \log (2 (b - a)/(x- y))}}  \geq m \Big) \leq 2 e^{- dm^2}.
		\end{align*}
	\end{lemma}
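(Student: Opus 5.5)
The statement is quoted from \cite{wu2025optimaltransport}, so we only sketch how we would prove it. The idea is to combine the $\Ham$-Brownian Gibbs property with Caffarelli's contraction theorem to obtain Gaussian concentration for increments, and then to chain.

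\emph{Finite means and the parabolic mean formula.} Given integrability, the formula $\E \mathcal H^{(t)}_n(x) = \E \mathcal H^{(t)}_n(0) - x^2/(2t)$ is immediate from stationarity of $x \mapsto \hat{\mathcal H}^t_n(x)$. Integrability is the only issue, and we would prove it by induction on $n$.
\begin{itemize}
\item For $n=1$, Lemma \ref{lemma:kpzeqn_tails} gives both tails.
\item For the inductive step, we resample line $n$ on $[-1,1]$ given lines $n-1$ and $n+1$ and its endpoints. Lemmas \ref{lemma:stoch_dom1} and \ref{lemma:stoch_dom2} sandwich $\mathcal H_n(0)$ between $\Ham$-Brownian bridges whose boundary data are controlled by $\mathcal H_{n-1}$.
\item The lower tail of $\mathcal H_n$ should then follow from the upper tail of $\mathcal H_{n-1}$ and a Brownian-bridge estimate. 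We expect an upper bound to need only crude control, for instance via an exponential moment of the gap, which is what Theorem \ref{thm:e_moments} later makes precise.
\end{itemize}

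\emph{Log-concavity and Caffarelli.} Fix a window $[a-L,b+L]$ and the lines $1,\dots,j+1$ on it. Conditional on $\mathcal F_{\ext}$, the law of $(\mathcal H_1,\dots,\mathcal H_j)$ on the window is the free Brownian bridge law tilted by $\exp(-\sum_i\int e^{\Delta_i h})$. Since $h\mapsto \int e^{h_{i+1}-h_i}$ is convex, this is a log-concave perturbation of a Gaussian measure. Caffarelli's theorem, applied to finite-dimensional discretizations and passed to the limit, then shows the conditional law is the push-forward of the Brownian bridge law under a map that is $1$-Lipschitz in the Cameron--Martin ($H^1$) norm. The increment functional $h\mapsto h_j(x)-h_j(y)$ has Cameron--Martin Lipschitz constant $\sqrt{|x-y|}$. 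Hence, conditionally on $\mathcal F_{\ext}$,
\[
\prob\big(|\Delta_{x,y}-\E[\Delta_{x,y}\mid\mathcal F_{\ext}]|>m\sqrt{|x-y|}\big)\le 2e^{-m^2/2}.
\]

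\emph{Main obstacle: the conditional means.} We must show that $\E[\mathcal H_j(x)-\mathcal H_j(y)\mid\mathcal F_{\ext}]+(x^2-y^2)/(2t)$ is itself Gaussian-small, of size $O(\sqrt{|x-y|})$, uniformly in $j$. Our first attempt is a variance decomposition. Unconditionally, stationarity gives $\E[\hat{\mathcal H}_j(x)-\hat{\mathcal H}_j(y)]=0$. We would then let $L\to\infty$: the conditional mean is a martingale in the enlarging exterior $\sigma$-algebra, and its fluctuations should be governed by the same contraction applied on larger windows. This would give a sub-Gaussian bound on the full increment with variance proxy $C|x-y|$.

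If this fails, the fallback is to use Caffarelli directly on the O'Connell--Yor prelimit. There one writes the whole line ensemble as a log-concave tilt of independent Brownian motions, via O'Connell's Doob transform, and passes the bound through the scaling limit.

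\emph{Chaining.} Once $\prob(|\hat{\mathcal H}_j(x)-\hat{\mathcal H}_j(y)|>m\sqrt{|x-y|})\le 2e^{-dm^2}$ holds uniformly in $j$, $x$ and $y$, we apply a Garsia--Rodemich--Rumsey or dyadic chaining argument on $[a,b]$. The logarithmic factor pays for the union bound over dyadic scales. This yields the Lévy-type modulus $\sqrt{|x-y|\log(2(b-a)/|x-y|)}$ with a tail $2e^{-dm^2}$ in which $d$ is universal.
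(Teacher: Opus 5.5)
The paper gives no proof of this lemma; it is quoted from \cite{wu2025optimaltransport}. So your sketch has to stand on its own. The conditional Caffarelli bound and the final chaining are fine, but the two substantive steps are not carried out.

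\textbf{First gap: the centering.} As the window grows, the exterior $\sigma$-algebras \emph{decrease}. So $\E[\Delta_{x,y}\mid\mathcal F_{\ext}]$ is a reverse martingale, converging to $\E[\Delta_{x,y}\mid\mathcal G_\infty]$ with $\mathcal G_\infty=\bigcap\mathcal F_{\ext}$. With line $j+1$ kept in the exterior, $\mathcal G_\infty\supseteq\sigma(\mathcal H_{j+1},\mathcal H_{j+2},\dots)$, so the limit is genuinely random. Even if you also let the number of resampled lines go to infinity, you need triviality of the tail $\sigma$-algebra of $\mathcal H^{(t)}$, i.e.\ extremality of its law among $\Ham$-Brownian Gibbs measures. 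You neither prove nor cite this.

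The variance decomposition cannot replace it. In $\operatorname{Var}\Delta=\E\operatorname{Var}(\Delta\mid\mathcal F)+\operatorname{Var}(\E[\Delta\mid\mathcal F])$ the contraction bounds only the first term. Telescoping over larger windows can at best relate the conditional mean to $\E[\Delta\mid\mathcal G_\infty]$; it cannot make that limit deterministic.

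Your fallback is the right kind of idea, since a deterministic starting point makes the centering deterministic. However, none of its content is checked:
\begin{itemize}
\item Relative to Wiener measure from a deterministic $x^0$, O'Connell's diffusion has density $\exp(-\int_0^s\sum_i e^{X_{i+1}-X_i}\,du)\,\psi_0(X_s)/\psi_0(x^0)$. You therefore need the class-one Whittaker function $\psi_0$ to be log-concave. This does follow from Givental's integral representation and Pr\'ekopa's theorem.
\item The KPZ scaling uses the entrance law at $-\infty$, which you must approximate by deterministic starts.
\item You must show that the uniformly sub-Gaussian centered increments, and their centerings, pass through the limit $N\to\infty$.
\end{itemize}
Only after this does stationarity force $\E[\hat{\mathcal H}^t_j(x)-\hat{\mathcal H}^t_j(y)]=0$.

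\textbf{Second gap: integrability.} Resampling only line $n$ on $[-1,1]$, the neighbour $\mathcal H_{n-1}$ is a soft ceiling that can only push $\mathcal H_n$ down. Its upper tail is therefore irrelevant to a lower bound. Any lower bound must come from the endpoints $\mathcal H_n(\pm1)$, which by stationarity have the law of $\mathcal H_n(0)$ up to a deterministic shift, so the induction is circular.

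The upper-tail input you propose, Theorem \ref{thm:e_moments}, is unavailable here. Its proof uses this very lemma, both for the Fubini step \eqref{E:Fubini} and for $\E[\mathcal H_n(b)-\mathcal H_n(a)]=(a^2-b^2)/(2t)$. The fallback does not supply integrability either: it controls only increments, and the one-point functional $h\mapsto h_n(\sqrt{tN}+x)$ has Cameron--Martin norm $(\sqrt{tN}+x)^{1/2}\to\infty$.

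A non-circular route does exist once the increment bound is in hand. The comparison in Lemma \ref{lemma:curv_adj_lemma} resamples lines $1,\dots,n-1$ over a long interval against a Toda boundary, rather than line $n$ over $[-1,1]$. It bounds $p^{\pm}_{n,t}(a)$ by three kinds of terms:
\begin{itemize}
\item exponentially small terms;
\item the top-line tails $p^{\pm}_{1,t}(a/8)$, controlled by Lemma \ref{lemma:kpzeqn_tails};
\item $p^{\mp}_{i,t}(\kappa)$ for $i<n$.
\end{itemize}
This closes an induction on $n$ from the top line and uses neither integrability nor Theorem \ref{thm:e_moments}. That, however, is not the argument you describe.
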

	
	\subsubsection{One more fact}
	
	We finish by recording a standard bound on the probability of a Brownian bridge staying in a tube.
	\begin{lemma} \label{lemma_tube} Let $L > 0$ and let $B: [0, L] \to \R$ be a Brownian bridge with $B(0) = B(L) = 0$. Then for all $r > 0$ and $d > \pi^2/8$, there exists a constant $c_d > 0$ such that
		\begin{align*}
			\prob \Big( \sup_{t \in [0, L]} |B_t| \leq r \Big) \geq c_d \exp(- d L/r^2).
		\end{align*}
	\end{lemma}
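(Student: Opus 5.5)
The plan is to reduce to Brownian motion by scaling, then use the Markov property at the hitting time of level $r$ together with the reflection principle. Brownian scaling gives $B_t = \sqrt{L}\,\tilde B_{t/L}$, where $\tilde B$ is a standard bridge on $[0,1]$. The event becomes $\{\sup_{[0,1]}|\tilde B| \le r/\sqrt L\}$. It therefore suffices to show $\prob(\sup_{[0,1]}|\tilde B|\le \rho)\ge c_d e^{-d/\rho^2}$ for all $\rho>0$.

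\textbf{Small $\rho$.} Use the classical eigenfunction expansion of the Brownian bridge sup-norm. For the process killed on exiting $(-\rho,\rho)$, the transition density from $0$ to $0$ in time $1$ is
\begin{align*}
q_\rho(1) = \frac{1}{\rho}\sum_{k\ge 0} \cos^2\Big(\frac{(2k+1)\pi}{2}\cdot 0\Big) e^{-(2k+1)^2\pi^2/(8\rho^2)} ,
\end{align*}
with eigenfunctions $\rho^{-1/2}\cos((2k+1)\pi x/(2\rho))$. The bridge probability is $q_\rho(1)/p_1(0)$, where $p_1(0)=(2\pi)^{-1/2}$. Keeping only the $k=0$ term, and noting that all terms are nonnegative since the cosines equal $1$ at $0$, gives
\begin{align*}
\prob\Big(\sup|\tilde B|\le\rho\Big)\ge \sqrt{2\pi}\,\rho^{-1}e^{-\pi^2/(8\rho^2)}.
\end{align*}
For $\rho\le 1$, the factor $\rho^{-1}\ge 1$, so we may take $c_d=\sqrt{2\pi}$ even with $d=\pi^2/8$. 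The step requiring care is nonnegativity of the series, which is immediate here since all terms are positive at the center.

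\textbf{Large $\rho$.} For $\rho\ge 1$ the probability is monotone increasing in $\rho$. Hence it is at least $\prob(\sup|\tilde B|\le 1)\ge \sqrt{2\pi}e^{-\pi^2/8}>0$, while $e^{-d/\rho^2}\le 1$. Taking $c_d=\sqrt{2\pi}e^{-\pi^2/8}$ covers this range. Combining the two ranges proves the lemma for every $d\ge\pi^2/8$, and so in particular for every $d>\pi^2/8$.

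\textbf{Main obstacle.} The only real input is the spectral formula for Brownian motion killed outside an interval, which is standard (e.g.\ via separation of variables for the heat equation with Dirichlet boundary conditions). If one prefers to avoid it, an alternative route uses a lower bound by chaining: split $[0,1]$ into $\asymp \rho^{-2}$ intervals of length $\rho^2/K$ and use the Markov property, requiring the path at each checkpoint to lie in $[-\rho/2,\rho/2]$ while staying in the tube. Each step has probability bounded below by a constant. This yields $e^{-C/\rho^2}$ with a nonsharp $C$, and recovering the sharp constant $d>\pi^2/8$ essentially needs the principal Dirichlet eigenvalue $\pi^2/(8\rho^2)$. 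That is why I would use the spectral route directly.
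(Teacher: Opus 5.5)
Your proof is correct, but it takes a different route from the paper. The paper rescales to $r=1$ and couples the bridge with a free Brownian motion $W$ so that $|B_t|\le |W_t|$. This reduces the problem to the tail $\prob(T\ge L)$ of the exit time $T$ of $W$ from $(-1,1)$. The paper then reads off the decay rate from the Laplace transform $\E e^{\theta T}=1/\cos(\sqrt{2\theta})$, obtained by optional stopping for the martingale $\cos(\sqrt{2\theta}W_t)e^{\theta t}$, which blows up as $\theta\uparrow\pi^2/8$. You instead work with the bridge itself. You write the tube probability exactly as the ratio of the Dirichlet heat kernel on $(-\rho,\rho)$ at $(0,0)$ to the free Gaussian density, and keep only the principal term of the eigenfunction expansion.

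The paper's route is more elementary in that it needs only an explicit martingale rather than the spectral expansion. However, it relies on the bridge-versus-Brownian-motion comparison. Also, the blow-up of $\E e^{\theta T}$ by itself only determines $\limsup_{L\to\infty}L^{-1}\log\prob(T\ge L)$. An additional step (e.g.\ submultiplicativity of $L\mapsto\prob(T\ge L)$ via the Markov property) is needed to get a bound valid for every $L$. Your route gives the non-asymptotic bound for all $\rho$ at once, with the explicit constant $c=\sqrt{2\pi}e^{-\pi^2/8}$, and even at the endpoint $d=\pi^2/8$.

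Two cosmetic points. First, your opening sentence announces a hitting-time/reflection-principle argument that you never carry out, so it should be removed. Second, the cosines you list are only the even Dirichlet eigenfunctions. The odd ones, $\rho^{-1/2}\sin(k\pi x/\rho)$, vanish at $0$, which is why they are absent from $q_\rho(1)$. In any case, on the diagonal every term of the expansion is a positive exponential times the square of an eigenfunction value, hence nonnegative, so discarding terms is always legitimate for a lower bound.
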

	
	\begin{proof}
		By Brownian scaling, it suffices to prove the result when $r = 1$. Next, let $W_t$ be a standard Brownian motion on $[0, \infty)$ and let $T = \min \{t \ge 0: |W_t| = 1\}$. Now, $|W_t|$ is a Brownian motion, reflected off of the line $x = 0$, whereas $|B_t|$ is a similarly reflected Brownian motion, conditioned on $B_L = 0$. Therefore we can couple $W_t, B_t$ so that $|B_t| \le |W_t|$ for all $t \in [0, L]$, and so
		\begin{equation}
			\label{E:BM-compare}
			\prob \Big( \sup_{t \in [0, L]} |B_t| \leq 1 \Big) \ge \prob( T \ge L).
		\end{equation}
		Next, fixing $\theta \in (0, \pi^2/8)$, and using that $t \mapsto \cos(\sqrt{2 \theta} W_t) \exp(\theta t)$ is a martingale, we have that
		\begin{align*}
			1= \E \cos(\sqrt{2 \theta} W_{t \wedge T}) \exp(\theta (t \wedge T)) &\ge \cos(\sqrt{2 \theta}) \E \exp(\theta (t \wedge T)),
		\end{align*}
		which upon taking $t \to \infty$ implies that $\E \exp(\theta T) \le 1/\cos(\sqrt{2 \theta})$. This exponential bound for all $\theta < \pi^2/8$ gives us the required integrability to take $T \to \infty$ in the equality above for any fixed $\theta < \pi^2/8$ to get that $\E\exp (\theta T) = 1/\cos(\sqrt{2 \theta})$. The result follows since $\cos(\sqrt{2 \theta}) \to 0$ as $\theta \to \pi^2/8$.
	\end{proof}

	\section{Exponential moment of gaps in the KPZ line ensemble}
	
	In this section, we prove Theorem \ref{thm:e_moments}. We first establish an integration-by-parts formula for more general $\Ham$-Brownian line ensembles.
	For this section we will fix any continuous Hamiltonian $\Ham: \R \cup \{-\infty\} \to [0, \infty)$. In the remaining sections, we specialize to the choice $\Ham(x) = e^x$.

	\begin{thm} \label{thm:int_parts} Let $\mathbf{H}$ be any continuously differentiable Hamiltonian such that $|\Ham'(x)| = \exp(o(x^2))$ as $x \to \pm \infty$. Let $\Lag$
		be an $\mathbf{H}$-Brownian line ensemble with domain $[k_1, k_2]_\Z \times [-T, T]$, and law $\prob_{\Ham} = \prob_{\Ham}^{k_1, k_2, [a, b], \vec{x}, \vec{y}, f, g}$ for vectors $\vec{x}, \vec{y} \in \R^{k_2 - k_1 + 1}$ and boundary conditions $f, g:[a, b] \to \R$.
		
		Let $a < b \in [-T, T]$ and $i \in [k_1, k_2]_\Z$, define the random variable $Y = \Lag_i(b) - \Lag_i(a)$, and let $\mu_{a, b} = (\Lag_i(T) - \Lag_i(-T))(b - a)/(2T)$. Let $F: \R \to \R$ be any continuously differentiable function with bounded derivative. Then
		\begin{align*}
			&\E_{\Ham} \big[ (Y - \mu_{a, b})F(Y) \big] = \E_{\Ham} \big[ F'(Y)  \big] \big( (b - a) - \frac{(b - a)^2}{2T} \big)  \\
			& +  \int_{-T}^{T} \E_{\Ham}  [F(Y) (\Ham' (\Delta_i\Lag(s))  - \Ham'(\Delta_{i-1}\Lag(s)))]  \operatorname{Cov}^{T, a, b}(s) \, ds ,
		\end{align*}
		where
		\begin{align}
			\operatorname{Cov}^{T, a, b} (s) \defeq  \frac{(b \wedge s + T) (T - b \vee s)}{2 T }  - \frac{ (a \wedge s + T ) (T - a \vee s)  }{2 T} .
		\end{align}
	\end{thm}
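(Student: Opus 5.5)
The identity is Gaussian integration by parts (Stein's lemma) under the free Brownian bridge measure, combined with the Radon--Nikodym formula of Definition~\ref{defn:HBMelon_defn}. First write
\[
\E_{\Ham}\big[(Y-\mu_{a,b})F(Y)\big] = \frac{1}{Z_\Ham^{f,g}}\,\E_{\textrm{free}}\big[(Y-\mu_{a,b})F(Y)\,W_\Ham^{f,g}(B)\big].
\]
Under $\prob_{\textrm{free}}$ the line $B_i$ is a Brownian bridge on $[-T,T]$ from $x_i$ to $y_i$. The quantity $\mu_{a,b}$ is deterministic (it equals $(y_i-x_i)(b-a)/(2T)$), and it is exactly the mean of $Y=B_i(b)-B_i(a)$. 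Hence $Y-\mu_{a,b}$ is a centered linear functional of the Gaussian process $B_i$. Its covariance with $B_i(s)$ is
\[
\operatorname{Cov}(B_i(b)-B_i(a),B_i(s)) = \operatorname{Cov}^{T,a,b}(s),
\]
using the bridge covariance $(s\wedge u+T)(T-s\vee u)/(2T)$. Its variance is $\operatorname{Cov}^{T,a,b}(b)-\operatorname{Cov}^{T,a,b}(a)=(b-a)-(b-a)^2/(2T)$. The other lines are independent of $B_i$ and can be frozen by conditioning.

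Formally, Gaussian integration by parts gives
\[
\E[(Y-\mu)G] = \int \operatorname{Cov}(Y,B_i(s))\,\E\big[\tfrac{\delta G}{\delta B_i(s)}\big]\,ds
\]
for $G=F(Y)W$. The derivative falling on $F(Y)$ produces $F'(Y)W$ times the variance of $Y$. For the derivative falling on $W$, note that $B_i$ enters the Boltzmann weight only through $\int\Ham(B_{i+1}-B_i)$ and $\int\Ham(B_i-B_{i-1})$, with the conventions $B_{k_1-1}=f$ and $B_{k_2+1}=g$. Thus
\[
\frac{\delta W}{\delta B_i(s)} = W\,\big(\Ham'(\Delta_i B(s))-\Ham'(\Delta_{i-1}B(s))\big).
\]
Dividing by $Z$ and converting back to $\E_\Ham$ yields exactly the claimed formula.

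To make this rigorous, I would discretize. Fix a mesh of $[-T,T]$ containing $a$ and $b$, of width $\delta\to 0$, and replace each $\int_{-T}^T\Ham(\Delta_j B(s))\,ds$ by its Riemann sum over mesh points. The discretized weight $W_\delta$ is then a $C^1$ function of the finite Gaussian vector $(B_j(s_k))$. Finite-dimensional Stein's lemma, $\E[X_0\,G(X)]=\sum_k\operatorname{Cov}(X_0,X_k)\,\E[\partial_kG(X)]$, applies directly, and its right-hand side is a Riemann sum for the $ds$-integral. Integrability holds because $W_\delta\le 1$ (as $\Ham\ge 0$), $F'$ is bounded, $F$ grows at most linearly, and $|\Ham'(x)|=\exp(o(x^2))$ is integrable against Gaussian tails.

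The main obstacle is passing $\delta\to0$. By continuity of paths and of $\Ham$, we have $W_\delta\to W$ almost surely, and also $\Ham'(\Delta B(s_k))\to\Ham'(\Delta B(s))$. I would dominate everything by $C(1+|Y|)\exp\big(o(\sup_s|B|^2)\big)$ times terms involving the boundary functions $f,g$, and use Fernique-type Gaussian tails for $\sup|B_j|$ to apply dominated convergence. The terms with $f$ and $g$ need those functions to be locally bounded; I would first prove the identity for continuous bounded $f,g$ and then extend by approximation if necessary. Finally, I would check $Z>0$, which is immediate since $\Ham$ is finite and continuous.
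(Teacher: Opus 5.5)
Your proposal is correct and follows essentially the same route as the paper: express $\E_\Ham$ through the Radon--Nikodym derivative against free bridges, discretize the Boltzmann weight on a mesh containing $a,b$, apply finite-dimensional Gaussian integration by parts with the bridge covariance, and pass to the limit using the $\exp(o(x^2))$ bound on $\Ham'$ against Gaussian tails. Your variations are minor and all fine. You use a mesh containing $a,b$ directly rather than dyadic points plus a continuity argument, and dominated convergence via Fernique rather than uniform integrability. You also explicitly require continuous bounded $f,g$, which the paper's Riemann-sum step tacitly assumes.
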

	\begin{remark}If we specialize to the KPZ line ensemble with Hamiltonian $\Ham(x) = e^{x}$, then, given the additional assumption that $\mathcal H^{(t)}_{i+1} - \mathcal H^{(t)}_{i} \xrightarrow[t \to \infty]{} -\infty$ for all $i$, Theorem \ref{thm:int_parts} implies that for any index $i$, conditional on $\mathcal H^{(t)}|_{(\{i\} \times [a, a + 2b])^c}$ the increment $\mathcal H^{(t)}_i(a + b) - \mathcal H^{(t)}_i(a)$ converges to a normal random variable with mean $[\mathcal H^{(t)}(a + 2b) - \mathcal H^{(t)}(a)]/2$ and variance $b/2$. When combined with the stationarity of $\mathcal H^{(t)}(x) + x^2/(2t)$, this implies that the process $\mathcal{H}^{(t)} - \mathcal{H}^{(t)}(0)$ converges as $t \to \infty$ to a line ensemble of infinitely many independent Brownian motions. This identifies one joint stationary law of the multi-layer stochastic heat equation. A family of these joint stationary laws can be constructed by adding in a slope.
	\end{remark}    
	
	To prove Theorem \ref{thm:int_parts} we will introduce an approximation of the $\Ham$-Brownian line ensemble which uses a discretization of the Boltzmann weight. More precisely, let $N \in \N$, $T > 0$, let $\delta = \delta(N, T) = \frac{2T}{2^N}$ and define the mesh:
	\begin{align*}
		\Pi_N &= \{ -T, -T + \delta, - T + 2 \delta , \ldots, T - \delta, T\} = \{ t_0 = -T < t_1 < \cdots < t_{2^N} = T \}.
	\end{align*}
	Given $k_1, k_2, [a, b], \vec{x}, \vec{y}, f$, and $g$, for $h \in C([k_1, k_2]_\Z \times [a, b])$ define the discretized Boltzmann weight
	\begin{equation}
		\label{E:disc-boltzmann}
		W_{\mathbf{H}_N}(h) \defeq \exp \left(- \delta \sum_{i = k_1-1}^{k_2} \sum_{j=1}^{2^N} \mathbf{H} \circ \Delta_i h (t_j) \right).
	\end{equation}
	We define the partition function $Z_{\Ham^N}$ and the measure $\prob_{\Ham^N}$ exactly as before, but with the discretized Boltzmann weight $W_{\mathbf{H}_N}$ in place of $W_{\mathbf{H}}^{f, g}$. Observe that as $N \to \infty$, we have
	\begin{equation}
		\label{E:weak-cvg}
		W_{\mathbf{H}_N} \to W_{\mathbf{H}} = W_{\mathbf{H}}^{f, g}, \qquad Z_N := \E_{\textrm{free}} W_{\mathbf{H}_N} (\Lag) \to Z = \E_{\textrm{free}} W_{\mathbf{H}} (\Lag),
	\end{equation}
	where the first convergence above is pointwise. Indeed, the first convergence follows from the continuity of $\Ham$, which ensures convergence of the Riemann sum in \eqref{E:disc-boltzmann} to an integral, and the second convergence is simply the bounded convergence theorem, using that all Boltzmann weights are bounded above by $1$. Together these facts imply weak convergence of $\prob_{\Ham^N}$ to $\prob_{\Ham}$, justifying our discretization.

	\begin{proof}[Proof of Theorem \ref{thm:int_parts}] First, it suffices to prove that the equality holds conditionally on the lines $\Lag_{i-1}, \Lag_{i+1}$, as then the unconditional equality can be recovered by taking an expectation. By the $\Ham$-Brownian Gibbs property (and re-indexing), this reduces the theorem to the case of a single path $\Lag_1$ with deterministic upper and lower boundary conditions $f = \Lag_0, g = \Lag_2$. Moreover, it is enough to prove the equality for $a, b \in \bigcup_{N\in \N} \Pi_N$, as the general result then follows by rational approximation, as all terms are continuous functions of $a, b$ (this uses the tail bound on $|\Ham'|$ to justify continuity of the integral in $a, b$). In this setting, we first prove an approximate version of this equality for the measures $\prob_{\Ham_N} = \prob_{\Ham_N}^{f, g}$, where $N$ is large enough so that $a < b \in \Pi_N$. We have
		\begin{align} \label{int_parts_init}
			\E_{\Ham_N} \big[  (Y- \mu_{a, b}) F(Y) \big] = Z_{N}^{-1} \E_{\textrm{free}} \big[\bar \Lag_1(b) F(Y) W_{\mathbf{H}_N}(\Lag)  \big] - Z_{N}^{-1} \E_{\textrm{free}} \big[\bar \Lag_1(a) F(Y)  W_{\mathbf{H}_N}(\Lag)  \big],
		\end{align}
		where $\bar \Lag(x) = \Lag(x) - \E_{\textrm{free}} \Lag_1(x)$. 
		Under the measure $\prob_{\textrm{free}}$, the discretized path $(\Lag_1(x) : x \in \Pi_N \setminus \{-T, T\})$ is a centered Gaussian random vector with covariance
		\begin{align} \label{bbridge_cov}
			\E \bar \Lag_1(x)\bar \Lag_1(y) = \frac{(T + x \wedge y) (T - x \vee y)}{2 T}.
		\end{align}
		Using Gaussian integration by parts, the first term on the right-hand side of \eqref{int_parts_init} is equal to
		\begin{align*}
			Z_N^{-1}\sum_{k = 1}^{2^N - 1} \E_{\textrm{free}} \big[ \bar \Lag_1(b) \bar \Lag_1(t_k) \big] \E_{\textrm{free}} \big[ \frac{\partial }{\partial \Lag_{i}(t_k)}  F \big(\Lag_i(b) - \Lag_i(a) \big)  \exp \big( - \sum_{i = 0}^1 \sum_{j = 0}^{2^N} \delta \Ham (\Delta_i \Lag(t_j)) \big) \big] .
		\end{align*}
		After inputting the covariances from \eqref{bbridge_cov} and computing partial derivatives, this becomes
		\begin{align*}
			&\sum_{k = 1}^{2^N - 1} \big( \frac{(T + b \wedge t_k) (T - b \vee t_k)}{2 T}   \big)  \cdot \delta \cdot Z_N^{-1}\E_{\textrm{free}} \big[ F(Y)  \big( \Ham'(\Delta_1 \Lag(t_k))  - \Ham'(\Delta_0 \Lag(t_k)) W_{\Ham_N}(\Lag)  \big] \\
			& + \frac{(T + b) (T - b)}{2T} Z_{N}^{-1} \cdot \E_{\textrm{free}} \big[ F'(Y) W_{\Ham_N}(\Lag) \big] - \frac{(T + a) (T - b)}{2T} \cdot Z_N^{-1}\E \big[ F'(Y) W_{\Ham_N}(\Lag)  \big].
		\end{align*}
		Now, upon taking $N \to \infty$, the last two terms above converge to the same objects with $Z_N \mapsto Z, W_{\Ham_N} \mapsto W_{\Ham}$. This uses \eqref{E:weak-cvg}. To deal with the Riemann sum above, observe that by taking the expectation outside, we can write this as $\E_{\textrm{free}} [\phi_n^b(\Lag)]$ for a sequence of functions $\phi_n^b$ converging pointwise to the limit
		$$
		\phi^b(\Lag) = Z^{-1} W_\Ham (\Lag) F(Y) \int_{-T}^{T}  \frac{(T + b \wedge s) (T - b \vee s)}{2T} (\Ham' \circ \Delta_1 \Lag - \Ham' \circ \Delta_0 \Lag)(s) ds.
		$$
		We can extend this pointwise convergence to convergence in expectation, $\E_{\textrm{free}} [\phi_n^b(\Lag)] \to \E_{\textrm{free}} [\phi^b(\Lag)]$ by appealing to the tail bound $|\Ham'(x)| = \exp(o(x^2))$ as $x \to \pm \infty$, which implies uniform integrability of the functions $\phi_n$ since the difference $\Delta_i \Lag$ has uniform Gaussian tails. In summary, as $N \to \infty$, $Z_{N}^{-1} \E_{\textrm{free}} \big[\bar \Lag_1(b) F(Y) W_{\mathbf{H}_N}(\Lag)  \big]$ converges to
		\begin{align*} 
			\E_{\textrm{free}} \phi^b(\Lag) + Z^{-1} \E_{\textrm{free}} [F'(Y) W_\Ham(\Lag)] \big( \frac{(T - b) (b - a)}{2T} \big).
		\end{align*}
		An identical argument for the second term of \eqref{int_parts_init} gives
		\begin{align*}
			\lim_{N\to \infty}	Z_{N}^{-1} \E_{\textrm{free}} \big[\bar \Lag_1(a) F(Y)  W_{\mathbf{H}_N}(\Lag)  \big]  = \E_{\textrm{free}} \phi^a(\Lag)- \E_{\textrm{free}} [F'(Y) W_\Ham(\Lag)] \big(  \frac{(T + a) (b - a)}{2T}\big).
		\end{align*}
		Subtracting the two expressions and rewriting the expectations in terms of $\E_\Ham$ concludes the proof. 
	\end{proof} 
	
	\begin{proof}[Proof of Theorem \ref{thm:e_moments}]
		In the proof, let $\mathcal H = \mathcal H^{(t)}$ be the KPZ$_t$ line ensemble. We will prove the theorem inductively, by showing that for all $n \in \N$, the exponential moments $\E e^{\mathcal{H}_{n + 1}(0) - \mathcal{H}_n(0)}, \E e^{\mathcal{H}_n(0) - \mathcal{H}_{n-1}(0)}$ both exist and we have the formula
		\begin{align}
			\label{E:exponential-gap}
			\E e^{\mathcal{H}_{n + 1}(0) - \mathcal{H}_n(0)} - \E e^{\mathcal{H}_{n}(0) - \mathcal{H}_{n - 1}(0)}  &= \frac{1}{t},
		\end{align}
		where $\mathcal H_0 = \infty$. We prove the base case and the inductive step together, pointing out the minor differences as we go. Fix $n \in \N$ and (for the inductive step when $n \ge 2$) assume that the claims hold for $n -1$.
		
		Fix $T > 0$, and define the exterior $\sigma$-algebra and its corresponding conditional distribution as 
		\begin{align}
			\mathcal{F}_T = \sigma ( \mathcal{H}|_{(\{i\} \times [-T, T])^c}), \qquad \prob_{T} = \prob \big( \cdot | \mathcal{F}_{T} \big) 
		\end{align}
		Now let $a < b \in [-T, T]$.  Applying Theorem \ref{thm:int_parts} with the Hamiltonian $\Ham(x) = e^x$ and $F = 1$ gives
		\begin{align} 
			\nonumber
			\E \big[ \mathcal{H}_n(b) - \mathcal{H}_n(a)  \big] &= \E \big[ \E_T [ \mathcal{H}_n(b) - \mathcal{H}_n(a) -  (\mathcal{H}_n(T) + \mathcal{H}_n(-T))(b-a)/(2T) ]\big] \\
			&= \E   \int_{-T}^{T} \E_T \big[ e^{\mathcal{H}_{n + 1}(s) - \mathcal{H}_n(s)} - e^{\mathcal{H}_{n}(s) - \mathcal{H}_{n - 1}(s)} \big] \textrm{Cov}^{T, a, b} (s) \, ds   \\
			\label{E:Fubini}
			&= \int_{-T}^{T} \E \big[ e^{\mathcal{H}_{n + 1}(s) - \mathcal{H}_n(s)} - e^{\mathcal{H}_{n}(s) - \mathcal{H}_{n - 1}(s)} \big] \textrm{Cov}^{T, a, b} (s) \, ds \\
			\label{eqn:emoments_init}
			&= (\E e^{\mathcal{H}_{n + 1}(0) - \mathcal{H}_n(0)} - \E e^{\mathcal{H}_{n}(0) - \mathcal{H}_{n - 1}(0)}) \int_{-T}^{T} \textrm{Cov}^{T, a, b} (s)   \, ds.
		\end{align}
		In the first equality, we have used the symmetry $\E\mathcal H_n(T) = \E\mathcal H_n(-T)$. The existence of the unconditional expectations in this equality uses Lemma \ref{lemma:opt_tran}. The second equality follows from Theorem \ref{thm:int_parts}. For the third equality, we require that $\E e^{\mathcal{H}_{n}(s) - \mathcal{H}_{n - 1}(s)} < \infty$ to justify the use of Fubini's theorem. In the $n = 1$ case, this is equivalent to the claim that $\E e^{\mathcal{H}_{n}(s)} < \infty$ which follows from the tail bounds in Lemma \ref{lemma:kpzeqn_tails}. In the $n \ge 1$ case this uses the inductive hypothesis. Note that a priori, the second expectation $\E e^{\mathcal{H}_{n+1}(s) - \mathcal{H}_{n}(s)}$ could equal $+ \infty$. The equality \eqref{E:Fubini} rules this out, showing that the left-hand side of \eqref{E:exponential-gap} is well-defined.
		The final equality uses the stationarity of $\mathcal H$.
		
		Next, we have that $\E[\mathcal{H}_n(b) - \mathcal{H}_n(a)] = (a^2 - b^2)/(2t)$ by Lemma \ref{lemma:opt_tran}. Therefore after evaluating the covariance integral in \eqref{eqn:emoments_init}, we have:
		\begin{align*}
			- \frac{b^2}{2t} + \frac{a^2}{2t} &= \big(- \frac{b^2}{2} + \frac{a^2}{2} \big) \big( \E e^{\mathcal{H}_{i + 1}(0) - \mathcal{H}_i(0)} - \E^{\mathcal{H}_{i}(0) - \mathcal{H}_{i - 1}(0)} \big),
		\end{align*}
		which simplifies to \eqref{E:exponential-gap}.
	\end{proof}
	
	\section{$\Ham$-Brownian line ensembles with stationary Toda boundary} \label{sec:toda_bdary}

	In this section, we study an $\Ham$-Brownian line ensemble $\Lag = (\Lag_1 , \dots, \Lag_n)$ on the interval $[-T, T]$ with a parabolic lower boundary, and side boundary conditions chosen so that we can explicitly compute the most favourable configuration for $\Lag$. Our goal in the section will be to show concentration around this favourable configuration. Throughout this section we set $\Ham(x) = e^x$.
	Let
	\begin{align*}
		f_i(x) &= -\frac{x^2}{2t} + e_i, \qquad e_i := \sum_{j=1}^{i-1} \log\!\left(\frac{j}{t}\right) + \frac{t}{24},
	\end{align*}
	and let $\vec{e} = (e_1 , \ldots , e_n)$. We define
	\begin{align*}
		\Lag \sim \prob_{\Ham}^{t, n} := \prob_{\Ham}^{1, n, (-T, T), \vec{e} - T^2/(2t), \vec{e} - T^2/(2t), \infty, f_{n + 1}}.
	\end{align*}
	That is, $\Lag$ is an $\Ham$-Brownian line ensemble on $[-T, T]$ with no upper boundary, parabola lower boundary $f_{n+1}$, and $\Lag(-T) = \Lag(T) = \vec{e} - T^2/(2t)$. We suppress the superscript $n$ and $t$ on $\prob_{\Ham}^{t, n}$ when these values are clear from context. Now, let $\prob_{\textrm{free}} = \prob^{\vec{x}}_{\textrm{free}} =  \prob_{\textrm{free}}^{1, n, (-T, T), \vec{x} , \vec{x})}$ be the law of $n$ independent Brownian bridges $B$ on $[-T, T]$ with $B(-T) = B(T) = \vec{x}$. 
	The Radon-Nikodym derivative of $\Lag \sim \prob_\Ham$ against $ \prob^{\vec{e} - T^2/(2t)}_{\textrm{free}}$ is given by:
	\begin{align*}
		\frac{d \prob_{\Ham}}{d \prob^{\vec{e} - T^2/(2t)}_{\textrm{free}}}(\Lag)  = Z^{-1} \exp \Big( - \sum_{i = 1}^{n} \int_{-T}^{T} e^{(\Lag_{i + 1} - \Lag_{i})(s)} \,ds \Big), \qquad Z = \E_{\textrm{free}}^{\vec{e} - T^2/(2t)} \exp \left( - \sum_{i=1}^{n} \int_{-T}^{T} e^{B_{i+1} - B_i} \right),
	\end{align*} 
	where $\Lag_{n + 1} = B_{n+1} = f_{n + 1}$ is the parabolic lower boundary. Our goal in this section is to show that top line $\Lag_1$ cannot deviate too far above the deterministic function $f_1$. This concentration is much easier to study when we analyze the shifted configuration
	\begin{align} \label{eqn:shifted_config}
		(\hat \Lag_1, \dots, \hat \Lag_n, 0) = (\Lag_1, \dots, \Lag_{n+1}) - (f_1, \dots, f_{n+1}).
	\end{align}
	This configuration equals $0$ at $\pm T$, and has law which is absolutely continuous with respect to $\prob^{\vec{0}}_{\textrm{free}}$. The next lemma computes the Radon-Nikodym derivative.
	\begin{lemma}
		\label{L:translated-RND}	
		Let $\prob_{\hat{\Ham}_t} = \prob_{\hat{\Ham}_t, (n)}$ be the law of $\hat \Lag =  (\hat \Lag_1, \dots, \hat \Lag_n)$, and let $\Phi(x) = e^x - 1 - x$. Then
		$$
		\frac{d\prob_{{\hat{\Ham}_t}}}{d\prob^{\vec{0}}_{\textrm{free}}}(\hat \Lag) = \hat Z^{-1} \hat W_n(\hat \Lag), \qquad \hat W_n(g) := \exp \Big( - \frac{1}{t} \sum_{i = 1}^{n} \int_{-T}^{T} i \Phi (g_{i + 1} - g_i)   \Big),
		$$
		where $\hat Z = \E^{\vec{0}}_{\textrm{free}} \hat W(B)$ and $g_{n+1} = 0$. Moreover, with $Z$ as above we have that
		$$
		Z  = \hat Z \exp(-\mathcal{E}^{n, t}_{\min}), \qquad \mathcal{E}^{n, t}_{\min} :=\frac{nT^3}{3t^2}+ \frac{Tn(n+1)}{t}.
		$$
	\end{lemma}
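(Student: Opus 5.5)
The plan is a direct Cameron--Martin (Girsanov) computation. Under $\prob^{\vec{e} - T^2/(2t)}_{\textrm{free}}$ we write $\Lag_i = f_i + \hat \Lag_i$. Since $f_i(\pm T) = e_i - T^2/(2t)$ matches the prescribed endpoint values, each $\hat \Lag_i$ is a bridge vanishing at $\pm T$. The deterministic shift
$$
h(x) := f_i(x) - (e_i - T^2/(2t)) = \frac{T^2 - x^2}{2t}
$$
is the same for every $i$, vanishes at $\pm T$, and is smooth. So it lies in the Cameron--Martin space of the Brownian bridge on $[-T,T]$.

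\textbf{Step 1 (Cameron--Martin factor).} By the Cameron--Martin theorem for Brownian bridges, for any bounded measurable $G$,
$$
\E^{\vec{e}-T^2/(2t)}_{\textrm{free}} G(\Lag) = \E^{\vec 0}_{\textrm{free}}\Big[ G(f + B)\prod_{i=1}^n \exp\Big(-\int_{-T}^T h'\, dB_i - \tfrac12 \int_{-T}^T (h')^2\Big)\Big].
$$
Here $h'(x) = -x/t$. Stochastic integration by parts, using $B_i(\pm T)=0$, gives $\int h'\,dB_i = \tfrac1t\int_{-T}^T B_i(s)\,ds$. We also have $\tfrac12\int_{-T}^T x^2/t^2\,dx = T^3/(3t^2)$. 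So the Cameron--Martin density is
$$
\exp\Big(-\frac{nT^3}{3t^2} - \frac1t\sum_{i=1}^n \int_{-T}^T B_i\Big).
$$
I will double-check the sign convention of the shift here, since the cancellation in Step 2 depends on it.

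\textbf{Step 2 (rewriting the interaction).} Since $e_{i+1}-e_i = \log(i/t)$ and the parabolic parts cancel in differences, we get
$$
e^{\Lag_{i+1}-\Lag_i} = \frac{i}{t}\, e^{B_{i+1}-B_i}, \qquad B_{n+1}=0.
$$
Writing $e^y = \Phi(y) + 1 + y$, the energy $\sum_{i=1}^n \int e^{\Lag_{i+1}-\Lag_i}$ splits into three pieces.
\begin{itemize}
\item The $\Phi$ part is $\frac1t\sum_i i\int\Phi(B_{i+1}-B_i)$.
\item The constant part is $\frac{2T}{t}\sum_{i=1}^n i = \frac{Tn(n+1)}{t}$.
\item The linear part is $\frac1t\int\sum_{i=1}^n i(B_{i+1}-B_i)$. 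By summation by parts with $B_{n+1}=0$, this equals $-\frac1t\sum_{i=1}^n\int B_i$.
\end{itemize}
Hence the Boltzmann weight $\exp(-\sum_i\int e^{\Lag_{i+1}-\Lag_i})$ equals
$$
\hat W_n(B)\, \exp\Big(-\frac{Tn(n+1)}{t} + \frac1t\sum_i\int B_i\Big).
$$

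\textbf{Step 3 (cancellation and conclusion).} Multiplying the two factors, the linear terms $\pm\frac1t\sum_i\int B_i$ cancel exactly. This cancellation is the key point and the place where sign errors are most likely, so I expect it to need the most care. What remains is
$$
\exp(-\mathcal{E}^{n,t}_{\min})\,\hat W_n(B).
$$
Taking $G\equiv 1$ in Step 1 gives $Z = \exp(-\mathcal{E}^{n,t}_{\min})\,\hat Z$. For general $G$, the law of $\hat\Lag = \Lag - f$ under $\prob_\Ham$ has density
$$
Z^{-1}\exp(-\mathcal{E}^{n,t}_{\min})\,\hat W_n = \hat Z^{-1}\hat W_n
$$
with respect to $\prob^{\vec 0}_{\textrm{free}}$, which is the claim.
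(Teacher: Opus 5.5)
Your proposal is correct and follows essentially the same route as the paper: a Cameron--Martin shift by the parabola $(T^2-x^2)/(2t)$, stochastic integration by parts to turn the density into $\exp\bigl(-\frac{nT^3}{3t^2}-\frac{1}{t}\sum_i\int B_i\bigr)$, and the identity $e^y=1+y+\Phi(y)$ with summation by parts so that the linear terms cancel. The paper simply does the constant endpoint shift and the parabolic shift as two separate steps; your signs are all consistent and the Step 3 cancellation occurs as written, the only cosmetic slip being that $Z=\hat Z e^{-\mathcal{E}^{n,t}_{\min}}$ comes from applying Step 1 with $G$ equal to the Boltzmann weight (equivalently, test function $1$ in the final density identity), not literally $G\equiv 1$.
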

	\begin{proof}
		We will first shift the endpoints of $\Lag$ and then shift by the parabola. Let
		$$
		(L_1, \dots, L_{n+1}) = (\Lag_1, \dots, \Lag_{n+1}) - \vec{e} + T^2/(2t),
		$$
		and let $\mu$ be the law of $L= (L_1, \dots, L_n)$.
		Then
		$$
		\frac{d\mu}{d\prob^{\vec{0}}_{\textrm{free}}}(L) = Z^{-1} \tilde W(L), \qquad \tilde W(h) = \exp \Big( - \sum_{i=1}^{n} \int_{-T}^T \frac{i}{t} \, e^{h_{i+1} - h_i} \Big), 
		$$
		where $h_{n+1}(x) = (T^2- x^2)/(2t) =: p(x)$, and $\E^{\vec{0}}_{\textrm{free}} \tilde W(B)$ is still equal to the original partition function $Z$. We now shift by the parabola $p$. First, let $\prob_B$ be the law of a Brownian bridge $B$ on $[-T, T]$ with $B(\pm T) = 0$ and let $\prob_{B-p}$ be the law of $\hat B = B - p$. By the Cameron-Martin theorem we have
		\begin{equation}
			\label{E:cam-mart}
			\frac{d\prob_{B - p}}{d\prob_{B}}(\hat B)= \exp\left( -\int_{-T}^{T} \dot{p}_i \, d \hat B_i - \frac12 \sum_{i=1}^n \int_{-T}^{T} \dot{p}_i^2 \right) = \exp\left(-\frac{T^3}{3 t^2} - \frac{1}{t} \int_{-T}^{T} \hat B_i(x) dx \right),
		\end{equation}
		where in the equality we have used stochastic integration by parts to rewrite the stochastic integral as a Lebesgue integral. Next, letting $\mu^k$ denote the $k$-fold product of a measure $\mu$, we have that
		\begin{equation*}
			\frac{d\prob_{{\hat{\Ham}_t}}}{d\prob_B^k}(\hat \Lag) = 	\frac{d\prob_{{\hat{\Ham}_t}}}{d\prob_{B-p}^k}(\hat \Lag) 	\frac{d\prob_{B-p}^k}{d\prob_{B}^k}(\hat \Lag) = \frac{d\mu}{d\prob_B^k}(\hat \Lag + p) 	\frac{d\prob_{B-p}^k}{d\prob_{B}^k}(\hat \Lag),
		\end{equation*}
		and so
		$$
		\frac{d\prob_{{\hat{\Ham}_t}}}{d\prob_B^k}(\hat \Lag) = Z^{-1} \exp \Big( - \sum_{i=1}^{n} \int_{-T}^T \frac{i}{t} \, e^{\hat \Lag_{i+1} - \hat \Lag_i} \Big) \exp\left(-\frac{T^3}{3 t^2} - \frac{1}{t} \int_{-T}^{T} \hat \Lag_i(x) dx \right),
		$$
		where $\hat \Lag_{n+1} = 0$.
		Now, we can write
		$$
		\sum_{i=1}^{n} i e^{\hat \Lag_{i+1} - \hat \Lag_i} +  \hat \Lag_i = \frac{n(n+1)}{2} + \sum_{i=1}^n i \Phi(\hat \Lag_{i+1} - \hat \Lag_i),
		$$
		which allows us to simplify the previous display to get
		\begin{equation}
			\label{E:Z-final}
			\frac{d\prob_{{\hat{\Ham}_t}}}{d\prob_B^k}(\hat \Lag) = Z^{-1} \exp (-\mathcal{E}^{n, t}_{\min}) \exp \Big( - \frac{1}{t} \sum_{i = 1}^{n} \int_{-T}^{T} i \Phi (\hat \Lag_{i + 1} - \hat \Lag_i)(x) \, dx   \Big).
		\end{equation}
		Since both sides above integrate to $1$ under the measure $\prob_B^k = \prob^{\vec{0}}_{\textrm{free}}$, we get the desired evaluation for the partition function $Z$. Setting $\hat Z = Z \exp(\mathcal{E}^{n, t}_{\min})$ gives the desired Radon-Nikodym derivative formula. 
	\end{proof}
	
	The remainder of the section is devoted to controlling the behaviour of the top line of the ensemble $\hat \Lag$. We begin with a simple lower bound on $\hat{Z}$. 
	
	\begin{lemma} \label{lemma:part_lowbnd} There exists a  universal constant $d > 0$ so that
		$$
		\hat{Z}  \geq \frac{1}{2} \exp \left( -  d T n \max \left( 1 ,  t^{-1/2} n^{1/2} \right) \right).
		$$
	\end{lemma}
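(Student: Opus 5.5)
We bound $\hat Z = \E^{\vec 0}_{\textrm{free}} \hat W_n(B)$ from below by restricting to a tube event. Fix $r \in (0, 1]$, to be optimized, and let $A_r$ be the event that every bridge satisfies $\sup_{x \in [-T,T]} |B_i(x)| \le r/2$ for $i = 1, \dots, n$. On $A_r$ every gap obeys $|B_{i+1} - B_i| \le r$, including the bottom gap $|0 - B_n| \le r/2$. Since $\Phi(x) = e^x - 1 - x \le x^2$ for $|x| \le 1$, on $A_r$ we get
$$
\frac{1}{t} \sum_{i=1}^n \int_{-T}^{T} i \, \Phi(B_{i+1} - B_i) \le \frac{2T r^2}{t} \sum_{i=1}^n i \le \frac{2T r^2 n^2}{t}.
$$
Hence $\hat Z \ge \exp(-2T r^2 n^2/t) \, \prob^{\vec 0}_{\textrm{free}}(A_r)$.

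The bridges are independent. Apply Lemma~\ref{lemma_tube} with $L = 2T$, radius $r/2$ and a fixed $d_0 > \pi^2/8$ (say $d_0 = 2$). This gives
$$
\prob(A_r) \ge c^n \exp(-8 d_0 T n / r^2).
$$
The prefactor $c^n$ is awkward, because the lemma's target has only a factor $1/2$ with no $n$-dependence. It is harmless when $T$ is bounded below, since $c^n = e^{-n\log(1/c)}$ is absorbed into $e^{-dTn}$ once $T \gtrsim 1$. This is the main obstacle. For small $T$, Lemma~\ref{lemma_tube} itself is lossy, and I would prove the needed fact directly: for a standard Brownian bridge on $[0, L]$,
$$
\prob\big(\sup |B| \le \rho\big) \ge \tfrac{1}{2}\exp(-d' L/\rho^2)
$$
uniformly in $L/\rho^2$. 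Near $L/\rho^2 = 0$ the left side tends to $1$. The product of $n$ copies of $1 - \epsilon$ must still be compared with $\tfrac12 e^{-dTn}$, so in that regime I would instead use $\prob(\sup|B| > \rho) \le 2e^{-2\rho^2/L}$, which gives
$$
\log \prob(\sup |B| \le \rho) \ge -C\, e^{-2\rho^2/L} \ge -C' L/\rho^2.
$$
Combining this with the large-$L/\rho^2$ regime from the eigenvalue asymptotics, we get $\prob(\sup|B| \le \rho) \ge \exp(-d' L/\rho^2)$ for all $L, \rho$, with no prefactor. This yields
$$
\hat Z \ge \exp\big(-2Tr^2 n^2/t - C T n / r^2\big).
$$

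It remains to optimize over $r \in (0,1]$. If $n \le t$, take $r = 1$: the exponent is at most $2Tn^2/t + CTn \le (2+C)Tn$. If $n > t$, take $r = (t/n)^{1/4} < 1$. Then $r^2 n^2/t = n^{3/2} t^{-1/2}$ and $n/r^2 = n^{3/2} t^{-1/2}$, so the exponent is at most $(2+C)\, T n \cdot t^{-1/2} n^{1/2}$. In both cases
$$
\hat Z \ge \exp\big(-dTn \max(1, t^{-1/2} n^{1/2})\big),
$$
which is even stronger than the claim, so the factor $\tfrac12$ leaves slack for any constant prefactor in the tube estimate.
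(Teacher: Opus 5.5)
Your proof is correct and follows the paper's argument: restrict $\hat Z$ to the event that all $n$ bridges stay in a tube of width $O(r)$, bound $\Phi$ quadratically there, control the tube probability with Lemma~\ref{lemma_tube}, and take $r=\min(1,(t/n)^{1/4})$ (the paper writes $\max$, which is a typo; your choice is the right one). Your two-regime tube estimate is a genuine gain in rigor. For small $L/\rho^2$ you use $\prob(\sup|B|>\rho)\le 2e^{-2\rho^2/L}$, and for large $L/\rho^2$ you absorb $c_d$ into the exponent. The paper instead asserts $\prob(\mathcal S_r)\ge\frac12 e^{-dTn/r^2}$ directly from Lemma~\ref{lemma_tube}, but that lemma alone only gives $c_d^n e^{-dTn/r^2}$, which leaves an $n$-dependent prefactor when $T/r^2$ is small.
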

	
	\begin{proof}
		Fix $r > 0$, and let
		\begin{align*}
			\mathcal S_r &= \Big\{  \sup_{i = 1, \ldots, n} \sup_{s \in [-T, T]} |B_i(s)| \leq r \Big\}
		\end{align*}
		be the event where $n$ Brownian bridges each stay in a tube of width $r$. By Lemma \ref{lemma_tube}, there exists an absolute constants $d > 0$ such that $\prob^{\vec{0}}_{\textrm{free}}(\mathcal{S}_r) \geq \frac{1}{2} \exp (- \frac{d T n}{r^2})$ for all $r > 0 $. In addition, on $\mathcal S_r$, we have $|B_{i + 1} - B_i| \leq 2r$, and therefore $\Phi(B_{i + 1} - B_i) \leq e^{2r} - 1 - 2r \leq r^2$ for $r \leq 1$. Therefore for $r \leq 1$, $\hat{Z}$ is lower bounded by 
		\begin{align*}
			\E^{\vec{0}}_{\textrm{free}} \Big[ \mathbbm{1}_{\mathcal{S}_r} \exp \Big( - t^{-1} \sum_{i = 1}^{n} \int_{-T}^{T} i \Phi (B_{i + 1} - B_i) \Big) \Big]
			&\geq \prob^{\vec{0}}_{\textrm{free}} (\mathcal{S}_r) \cdot \exp \Big( - t^{-1} \sum_{i = 1}^{n} \int_{-T}^{T} i r^2  \Big) \\ 
			&\geq  \frac{1}{2} \exp \Big( - \frac{dT n }{r^2} -  \frac{T n (n + 1) r^2}{t}  \Big)
		\end{align*}
		Now set $r = \max \left( 1 , t^{1/4} n^{-1/4} \right)$. This gives the desired bound after a short computation.
	\end{proof} 
	
	Since $\hat W \le 1$, Lemma \ref{lemma:part_lowbnd} has the following immediate corollary.
	
	\begin{cor} \label{cor:trivial_case}
		There exists a universal constant $d > 0$ such that for any measurable event $A$, 
		\begin{align*}
			\prob_{\hat{\Ham}_t} \left( A \right) & \leq \hat{Z}^{-1} \prob^{\vec{0}}_{\textrm{free}}(A) \le 2\exp \left(d T n \max \left( 1 ,  t^{-1/2} n^{1/2} \right) \right) \prob^{\vec{0}}_{\textrm{free}}(A).
		\end{align*}
	\end{cor}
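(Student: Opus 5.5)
The corollary follows by writing $\prob_{\hat{\Ham}_t}(A)$ through its Radon--Nikodym derivative against $\prob^{\vec{0}}_{\textrm{free}}$ from Lemma \ref{L:translated-RND}:
\[
\prob_{\hat{\Ham}_t}(A) = \hat Z^{-1}\,\E^{\vec{0}}_{\textrm{free}}\big[\mathbbm{1}_A \hat W_n(B)\big].
\]
We then bound the weight $\hat W_n$ above by $1$, and bound the normalizing constant $\hat Z$ below using Lemma \ref{lemma:part_lowbnd}.

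For the first step, note that $\Phi(x) = e^x - 1 - x \ge 0$ for all real $x$, by convexity of the exponential: $e^x \ge 1 + x$. Each integrand $i\,\Phi(g_{i+1}-g_i)$ in the exponent of $\hat W_n$ is therefore nonnegative, so $0 \le \hat W_n \le 1$ pointwise. Hence
\[
\E^{\vec{0}}_{\textrm{free}}\big[\mathbbm{1}_A \hat W_n(B)\big] \le \prob^{\vec{0}}_{\textrm{free}}(A),
\]
which gives the first inequality $\prob_{\hat{\Ham}_t}(A) \le \hat Z^{-1}\prob^{\vec{0}}_{\textrm{free}}(A)$.

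For the second inequality, take reciprocals in Lemma \ref{lemma:part_lowbnd}:
\[
\hat Z^{-1} \le 2\exp\!\big(dTn\max(1, t^{-1/2}n^{1/2})\big),
\]
with the same universal constant $d$. Multiplying this by $\prob^{\vec{0}}_{\textrm{free}}(A)$ completes the proof. Every step is immediate from earlier results; the only point to check is the nonnegativity of $\Phi$, which ensures $\hat W_n \le 1$.
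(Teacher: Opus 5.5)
Your proof is correct and follows the same route as the paper: you use the Radon--Nikodym formula from Lemma \ref{L:translated-RND} and bound $\hat W_n \le 1$ via $\Phi \ge 0$, which gives the first inequality. The second inequality then comes from inverting the lower bound on $\hat Z$ in Lemma \ref{lemma:part_lowbnd}.
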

	
	
	We are now ready to bound the supremum of the top line $\hat L_1$.
	\begin{lemma} \label{lemma:top_line_melon}
		For all $\epsilon > 0$, there exist constants $c, d > 0$ so that for any $t, T > 0$, $n \in \N$ and $a > c T n^{\epsilon} \max (1, t^{-1/4} n^{1/4} )$ we have
		\begin{align*}
			\prob_{\hat{\Ham}_t} (\sup_{x \in [-T, T]}  \hat{\Lag}_1(x) \geq a) & \leq \exp (- d a^2/T).
		\end{align*}
	\end{lemma}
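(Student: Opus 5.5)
The naive route is to combine Corollary~\ref{cor:trivial_case} with the Brownian bridge maximum: for a bridge on $[-T,T]$ pinned at $0$, $\prob(\sup \hat\Lag_1 \ge a) = \exp(-a^2/T)$. That gives
\[
\prob_{\hat{\Ham}_t}\Big(\sup \hat\Lag_1 \ge a\Big) \le 2\exp\big(dTnM - a^2/T\big), \qquad M := \max(1, t^{-1/4}n^{1/4})^2 ,
\]
which only works once $a \gtrsim T\sqrt{nM}$. That is much worse than the stated threshold $a > cTn^{\epsilon}\max(1,t^{-1/4}n^{1/4})$. The loss comes from paying the full $n$-line partition cost to control a single line. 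The plan is therefore to pay the partition-function cost only for a small number of top lines at a time, and to use the monotone couplings (Lemmas~\ref{lemma:stoch_dom1}, \ref{lemma:stoch_dom2}, which apply to the convex vector Hamiltonian $\frac{i}{t}\Phi$) to decouple those lines from the rest.

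\textbf{Step 1 (one-block estimate).} Fix $j \le n$ and condition on $\hat\Lag_{j+1}$. By the Gibbs property, lines $1,\dots,j$ form a vector-Hamiltonian ensemble with weights $\frac{i}{t}\Phi$, zero endpoints, and lower boundary $\hat\Lag_{j+1}$. On $\{\sup \hat\Lag_{j+1} \le b\}$, Lemma~\ref{lemma:stoch_dom1} lets me replace the lower boundary by the constant $b$, which only raises the lines.

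For this comparison ensemble I lower bound the partition function by repeating Lemma~\ref{lemma:part_lowbnd}, but with tubes of radius $r$ around a deterministic profile $\psi_b$. The profile rises from $0$ to $b$ within time $T/2$ of the endpoints and equals $b$ in between. The Cameron--Martin cost of $\psi_b$ is $O(jb^2/T)$, and the interaction cost inside the tube is controlled exactly as in Lemma~\ref{lemma:part_lowbnd}. This should give
\[
\hat Z_{j,b} \ge \tfrac12 \exp\big(-dTj\max(1,t^{-1/2}j^{1/2}) - Cjb^2/T\big).
\]
Combining this with $\hat W \le 1$ and the free bridge maximum bound, for $a \ge 2b$ I expect
\[
\prob\Big(\sup \hat\Lag_1 \ge a \,\Big|\, \sup\hat\Lag_{j+1}\le b\Big) \le 2\exp\big(dTj\max(1,t^{-1/2}j^{1/2}) + Cjb^2/T - a^2/(4T)\big).
\]

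\textbf{Step 2 (multiscale chain).} I choose indices $n=j_0>j_1>\dots>j_K=1$ with $j_{k+1}\approx j_k n^{-\epsilon}$, so that $K=O(1/\epsilon)$, together with thresholds $a_0 \le a_1 \le \dots \le a_K=a$. I then apply Step 1 to the block of lines $j_{k+1}+1,\dots,j_k$, conditioning on the line just below the block. Since Step 1 only tracks the top line of a block, I also need a version controlling every line of the block at once. That can be obtained from the same free-bridge bound plus a union bound over the $j_k$ lines, which costs only a factor $j_k \le e^{\log n}$.

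The base of the chain is the bottom line, which is pinned at $0$. A union bound over the $K$ levels then gives the result, provided the thresholds are chosen so that every exponent is at most $-da^2/T$.

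\textbf{Main obstacle.} The difficulty is the term $Cjb^2/T$. Lifting $j$ lines to height $b$ costs Dirichlet energy proportional to $j$. If the thresholds must grow by a factor $\sqrt{j_k/j_{k+1}} = n^{\epsilon/2}$ per level, then after $1/\epsilon$ levels they have grown by $\sqrt n$, and the gain from Step 2 is lost.

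To avoid this, my first attempt is to lower bound the partition function more cleverly. Instead of lifting all $j$ lines to $b$, I would use a \emph{staircase} profile in which only the lines near the bottom of the block are raised toward $b$, while the lines near the top stay near $0$. The gaps created this way are negative, and $\Phi(x)\approx |x|$ there, so the staircase costs only about $\frac{i}{t}\cdot\frac{b}{j}\cdot T$ per line instead of $b^2/T$. The thresholds then need to grow only additively, by about $Tn^{\epsilon}\max(1,t^{-1/4}n^{1/4})$ per level. Summing over $O(1/\epsilon)$ levels gives exactly the threshold claimed in the lemma, with the Gaussian tail $\exp(-da^2/T)$ supplied at the last level by the free bridge bound for $\hat\Lag_1$.
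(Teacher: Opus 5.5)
\textbf{There is a genuine gap: your multiscale chain never improves on the naive threshold.}

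Already the first block, lines $j_1+1,\dots,n$, contains $n-j_1\ge n/2$ lines. You also cannot discard the lines above it: with the shifted Hamiltonians $\frac{i}{t}\Phi$, an upper boundary at $+\infty$ has infinite energy since $\Phi(-\infty)=\infty$, so Lemma~\ref{lemma:stoch_dom1} gives no way to remove it. Hence Step~1 at this level costs a partition-function factor that you can only bound by $e^{dTnM_n^2}$, where $M_n=\max(1,t^{-1/4}n^{1/4})$. Against this, each individual line gives you only the single-bridge tail $e^{-a_0^2/(4T)}$, and the union bound over the block merely adds a factor $n$. So you need $a_0\gtrsim T\sqrt n\,M_n$, and since $a_0\le a$ this is exactly the naive bound you started from.

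The staircase profile does not touch this term. It only attacks $Cjb^2/T$, which is self-inflicted anyway. By Lemma~\ref{lemma:stoch_dom2} you may also raise the endpoints to $b$. The comparison ensemble is then simply $b$ plus a sample of $\prob_{\hat\Ham_t,(j)}$, with partition function $\hat Z_j$ and no Cameron--Martin cost. For $a\ge 2b$ the free tail $e^{-(a-b)^2/T}$ is still at most $e^{-a^2/(4T)}$.

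\textbf{The missing idea} is to make the gain scale with the number of lines, using the independence of the lines under $\prob^{\vec 0}_{\textrm{free}}$, and to look for \emph{one} low line rather than controlling every line of a block.
Under the free measure, the probability that every $\hat\Lag_k$, $\ell\le k\le 2\ell$, exceeds $a/2$ somewhere is $e^{-(\ell+1)a^2/(4T)}$. So a single application of Corollary~\ref{cor:trivial_case}, with one payment of $e^{dTnM_n^2}$, shows the following: for $a\ge cTM_n\sqrt{n/\ell}$, with probability at least $1-e^{-a^2/(2T)}$, some $k\in[\ell,2\ell]$ has $\sup\hat\Lag_k\le a/2$.

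Take the largest such $k$. That event is measurable with respect to lines $k,\dots,2\ell$, so the Gibbs property of lines $1,\dots,k-1$ is preserved. Now raise the lower boundary and the endpoints to $a/2$ via Lemmas~\ref{lemma:stoch_dom1} and~\ref{lemma:stoch_dom2}. Writing $p_n(a)$ for the probability in the lemma for the $n$-line ensemble, this yields
$p_n(a)\le\max_{\ell\le k\le 2\ell}p_{k-1}(a/2)+e^{-a^2/(2T)}$.

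With $n/\ell\le n^{\epsilon}$, each step cuts the number of lines by a factor $n^{\epsilon}$ at the price of halving $a$. After $O(1/\epsilon)$ steps you reach $O(1)$ lines, where Corollary~\ref{cor:trivial_case} applies directly. The threshold and $d$ lose only a factor $2^{O(1/\epsilon)}$. This is the paper's argument.
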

	
	\begin{proof}
		Setting some notation, we define
		\begin{align*}
			p_n(a) = \prob_{\hat{\Ham}_t, (n)} (\sup_{x \in [-T, T]}  \hat{\Lag}_1(x) \geq a),
		\end{align*}
		and set $M_n = \max (1, t^{-1/4} n^{1/4} )$. We start by proving the following claim.\\
		
		\noindent \textbf{Claim:} \qquad There exists an absolute constant $c > 0$ such that for any choices of $T, t > 0 $ and $n \in \N, \ell \in \{2, \dots, \lfloor n/2 \rfloor\}$ , for any $a > c T M_n \sqrt{n/\ell}$, we have that
		\begin{equation}
			\label{E:pnpell}
			p_n(a) \le \max_{\ell \le k \le 2 \ell} p_{k-1}(a/2) + \exp(-a^2/(2T)).
		\end{equation}
		
		\noindent \textbf{Proof of claim:} \qquad For any $\ell \in \{2, \ldots, \lfloor n/2 \rfloor \}$ consider the event
		\begin{align*}
			A_{\ell} = \bigcup_{k=\ell}^{2\ell} D_k, \qquad D_k \defeq \left\{ \sup_{x \in [-T, T]} \hat{\Lag}_k(x) \leq \frac{a}{2} \right\}. 
		\end{align*}
		By Corollary \ref{cor:trivial_case}, we have
		\begin{align*}
			\prob_{\hat{\Ham}_t} (A_{\ell}^c) & \leq \hat{Z}^{-1} \prob^{\vec{0}}_{\textrm{free}} (A_{\ell}^c) \le \exp(d T n M_n^2 - \frac{a^2 \ell}{T}) \le \exp(-a^2/(2T)),
		\end{align*}
		where the final inequality uses the lower bound on $a$ and the upper bound on $n/\ell$, choosing $c > 0$ sufficiently large.
		Therefore
		\begin{align} \label{easy_step}
			p_n(a) & \leq \prob_{\hat{\Ham}_t} \left( \Big\{ \sup_{x \in [-T, T]}  \hat{\Lag}_1(x) \geq a  \Big\} \cap  A_{\ell} \right) +  \exp(-a^2/(2T)).
		\end{align}
		Now, \eqref{easy_step} is upper bounded by
		\begin{align} \label{union_bound}
			\max_{\ell \le k \le 2 \ell} \prob_{\hat{\Ham}_t} \Big(\sup_{x \in [-T, T]}  \hat{\Lag}_1(x) \geq a  \, \Big| \, D_k \Big) +  \exp(-a^2/(2T)).
		\end{align}		
		Now fix $k \in \{ \ell , \ldots , 2 \ell\}$, and consider a line ensemble $\tilde \Lag = (\tilde \Lag_1, \dots, \tilde \Lag_{k-1})$ such that $\tilde \Lag - a/2 \sim \prob_{\hat \Ham_t, (k-1)}$. That is, $\tilde{\Lag}$ has the same distribution as $\prob_{\hat \Ham_t, (k-1)}$, translated upwards by $\frac{a}{2}$. By Lemmas \ref{lemma:stoch_dom1}, \ref{lemma:stoch_dom2}, if $\hat{\Lag} \sim \prob_{\hat \Ham_t}(\cdot \;|\; D_k)$, then $\tilde{\Lag}$ stochastically dominates $(\hat{\Lag}_1, \dots, \hat{\Lag}_{k-1})$. Therefore \eqref{union_bound} is upper bounded by the right-hand side of \eqref{E:pnpell}, as desired. \hfill $\blacksquare$ \\

		Given the claim, the lemma follows by discrete calculus. Indeed, for fixed $\epsilon > 0$, construct a sequence of integers $\ell_1 = 2 < \dots < \ell_{\lceil 2/\epsilon \rceil} = n$ such that $2\ell_i/\ell_{i-1} \le n^\epsilon$ for all $n$. We can do this as long as $n$ is sufficiently large given $\epsilon$. Then for $a > c 2^{\lceil 2/\epsilon \rceil} T M_n n^\epsilon$, for any $i \in \{\ell_j -1, \dots 2 \ell_j - 1\}$, since $i/\ell_{j - 1} \leq 2 \ell_{j} / \ell_{j - 1} \leq n^{\epsilon}$, taking $(n, \ell, a)  = (i, \ell_{j  - 1}, a/2^{\lceil 2/\epsilon \rceil - j})$ in the above claim gives that 
		\begin{equation*}
			\label{E:pnpell}
			p_i(a/2^{\lceil 2/\epsilon \rceil - j}) \le \max_{\ell_{j-1} \le k \le 2 \ell_{j-1}} p_{k-1}(a/2^{{\lceil 2/\epsilon \rceil} - j + 1}) + \exp(-a^2/(2^{2{\lceil 2/\epsilon \rceil} - 2j + 1}T)).
		\end{equation*}
		Chaining together these inequalities with initial tail bounds on $p_1, p_2, p_3$ from Corollary \ref{cor:trivial_case} gives the desired upper bound on $p_n(a) = p_{\ell_{\lceil 1/\epsilon\rceil}}(a)$.
	\end{proof}

	\section{Bounding the KPZ line ensemble}
	\label{sec:bounding-final}
	In this section, we prove Theorem \ref{thm:main_thm}. Recall from the statement of that theorem that
	\begin{align*}
		e_n^{(t)} &= \frac{t}{24} + \sum_{i = 1}^{n-1} \log( \frac{i}{t} )  = \log (t^{- (n - 1)} (n - 1)! ) + \frac{t}{24} = n \log n - n - (n - 1) \log t + \frac{t}{24} + O(\log n).
	\end{align*}
	Our goal is to show that $\mathcal{H}^{(t)}_n = \mathcal{H}_n$, the $n$th line of the KPZ$^{(t)}$ line ensemble, concentrates around the shifted parabola $e_n^{(t)} - x^2/(2t)$. With this in mind, define
	\begin{align*}
		\hat{\mathcal{H}}_n(x) = \mathcal{H}_n(x) - e_n^{(t)} + \frac{x^2}{2t},
	\end{align*}
	and for $x, T> 0$ and $T > 0$, define
	\begin{align*}
		p_{n,t}^{-}(x) &= \prob \left(\hat{\mathcal{H}}_n(0) \leq - x \right) ,\\
		p_{n,t}^{+}(x) &= \prob \left(\hat{\mathcal{H}}_n(0) \geq  x \right) ,\\
		q_{n,t}^{-}(x, T) &= \prob \left( \sup_{y \in [-T, T]} \hat{\mathcal{H}}_n(y) \leq - x \right) ,\\
		q_{n,t}^{+}(x, T) &= \prob \left( \inf_{y \in [-T, T]} \hat{\mathcal{H}}_n(y) \geq  x \right).
	\end{align*}
	We will prove Theorem \ref{thm:main_thm} by deriving inequalities between the four functions above. Before doing so, we record an a priori bound on $p_{n, t}^+$ which follows from the exponential moment formula for the gaps in the KPZ line ensemble, Theorem \ref{thm:e_moments}.

	\begin{lemma} \label{lemma:endpts_expmm} Let $n \in \N$ and $t > t_0 > 0$. Then for all $x > 0$,
		\begin{align*}
			p_{n,t}^{+}(x) & \leq p_{1, t}^+(x/2) + (n-1) e^{-x/(2n)} \le 2\exp(-d x^{3/2} t^{-1/2}) + n \exp(-x/(2n)),
		\end{align*}
		where $d > 0$ depends only on $t_0$.
	\end{lemma}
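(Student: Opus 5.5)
We telescope $\hat{\mathcal{H}}_n(0)$ into the top line plus the centred gaps, bound the top-line piece with Lemma \ref{lemma:kpzeqn_tails}, and bound each gap piece by Markov's inequality applied to the exact exponential moment in Theorem \ref{thm:e_moments}.

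\textbf{Decomposition.} Since $e_n^{(t)} = e_1^{(t)} + \sum_{i=1}^{n-1}\log(i/t)$ and $e_1^{(t)} = t/24$, at $x=0$ we have
\begin{align*}
\hat{\mathcal{H}}_n(0) = \hat{\mathcal{H}}_1(0) + \sum_{i=1}^{n-1} G_i, \qquad G_i := \mathcal{H}_{i+1}(0) - \mathcal{H}_i(0) - \log(i/t).
\end{align*}
Theorem \ref{thm:e_moments} gives $\E e^{G_i} = (t/i)\cdot(i/t) = 1$ exactly.

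\textbf{Union bound.} If $\hat{\mathcal{H}}_n(0) \ge x$, then either $\hat{\mathcal{H}}_1(0) \ge x/2$ or $\sum_{i=1}^{n-1} G_i \ge x/2$. In the second case some $G_i \ge x/(2(n-1)) \ge x/(2n)$ (interpret this as vacuous when $n=1$). Markov's inequality applied to $e^{G_i}$ gives
\begin{align*}
\prob\big(G_i \ge x/(2n)\big) \le e^{-x/(2n)} \, \E e^{G_i} = e^{-x/(2n)}.
\end{align*}
Summing over the $n-1$ indices yields the first inequality, $p^+_{n,t}(x) \le p^+_{1,t}(x/2) + (n-1)e^{-x/(2n)}$.

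\textbf{Top-line tail.} Note that $p^+_{1,t}(x/2) = \prob\big(\mathcal{H}_1(0) + t/24 \ge x/2\big)$. Apply Lemma \ref{lemma:kpzeqn_tails} with $m = x t^{-1/3}/2$, so that $m t^{1/3} = x/2$. This gives the bound $2\exp(-d\, m^{3/2}) = 2\exp\big(-d\, 2^{-3/2} x^{3/2} t^{-1/2}\big)$. Absorb $2^{-3/2}$ into $d$, which depends only on $t_0$, and bound $n-1 \le n$. This gives the second inequality.

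The only care needed is in the bookkeeping: the identity $e_n^{(t)} - e_1^{(t)} = \sum_{i<n}\log(i/t)$ must match the telescoped gaps, and Theorem \ref{thm:e_moments} must apply to every index $i \le n-1$. There is no real obstacle beyond this.
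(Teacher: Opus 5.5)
Your proof is correct and is essentially the paper's argument: you telescope $\hat{\mathcal{H}}_n(0)$ into $\hat{\mathcal{H}}_1(0)$ plus centred gaps, take a union bound, apply Markov's inequality using the exact moment from Theorem \ref{thm:e_moments}, and finish with Lemma \ref{lemma:kpzeqn_tails}. One harmless sign slip: with $e_1^{(t)} = t/24$ as you state, $p^+_{1,t}(x/2) = \prob(\mathcal{H}_1(0) - t/24 \ge x/2)$, not the probability you wrote, but this is at most $\prob(\mathcal{H}_1(0) + t/24 \ge x/2)$, so Lemma \ref{lemma:kpzeqn_tails} still gives the stated bound.
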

	
	\begin{proof} Notice that  
		\begin{align*} 
			\left\{ \hat{\mathcal{H}}_n(0) \geq x + e_{n}^{(t)} \right\} \subset 	\left\{ \hat{\mathcal{H}}_1(0) \geq  \frac{x}{2}  \right\} \cup \bigcup_{i = 1}^{n - 1}   \left\{  \mathcal{H}_{i + 1}(x) - \mathcal{H}_i(x) > \log(\frac{i}{t}) +  \frac{x}{2n}   \right\} .
		\end{align*}
		Thus, by Theorem \ref{thm:e_moments}, a union bound, and Markov's inequality we have 
		\begin{align*}
			p_{n,t}^{+}(x)  & \leq \prob \big(\hat{\mathcal{H}}_1(0)\geq  \frac{x}{2}  \big) + \sum_{i = 1}^{n-1} \prob \big(  \mathcal{H}_{i + 1}(x) - \mathcal{H}_i(x) > \log(\frac{i}{t}) +  \frac{x}{2n}   \big)  \\
			&\leq p_{1, t}^+(x/2) + (n-1) e^{-x/(2n)}.
		\end{align*}
		The second inequality uses Lemma \ref{lemma:kpzeqn_tails}.
	\end{proof}
	
	Next, we obtain inequalities for the functions $q^\pm, p^\pm$ as follows. First, we can upper bound $q_{n,t}^{-}$ using $p_{n,t}^{+}$. Next, we bound $p_{n,t}^{-}$ from $q_{n,t}^{-}$. By a symmetric argument, we are able to bound $q_{n,t}^{+}$ from $p_{n,t}^{-}$ and then bound $p_{n,t}^{+}$ from $q_{n,t}^{-}$. Closing this loop gives self-referential inequalities for $p_{n, t}^\pm$ which together prove Theorem \ref{thm:main_thm}. The next lemma proves our building block inequalities. 
	
	\begin{lemma} \label{lemma:curv_adj_lemma}
		First, for any $a, T, t > 0$ and $n \in \N$ we have 
		\begin{align}
			p_{n,t}^{\pm}(a) \leq 2\exp(- d a^2/T) + q_{n,t}^{\pm} (\frac{a}{2}, T), \label{eq:second_ineq}
		\end{align}
		where $d > 0$ is a universal constant.
		Next, for any $\epsilon > 0$ there exist constants $c, d > 0$ depending only on $\epsilon$ such that the following holds. For any $T, a, \kappa, t > 0$ and $n \in \N$ satisfying
		\begin{align}
			\label{E:param-ranges}
			4 t^{1/2} (a + \kappa)^{1/2} (1 + n/a)^{1/2} &\leq T \leq \frac{a}{c n^{\epsilon} M }
		\end{align} 
		where $M = \max(1, t^{-1/4} n^{1/4})$, we have:
		\begin{align}
			q_{n,t}^{\pm} (a, T) & \leq \exp \left(  - d a^2 / T \right) + p_{1, t}^{\pm}(\frac{a}{4}) + 2 \sum_{i = 1}^{n - 1} p_{i , t}^{\mp}(\kappa). \label{eq:first_ineq}
		\end{align}
	\end{lemma}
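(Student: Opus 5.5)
The two inequalities have different flavours. \eqref{eq:second_ineq} is a pure regularity statement and follows from the modulus of continuity in Lemma~\ref{lemma:opt_tran}. \eqref{eq:first_ineq} is a Gibbs resampling comparison with the stationary Toda boundary problem of Section~\ref{sec:toda_bdary}, combined with Lemma~\ref{lemma:top_line_melon}. I treat the $-$ signs; the $+$ signs are symmetric, with all inequalities reversed.

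\emph{Proof of \eqref{eq:second_ineq}.} Suppose $\hat{\mathcal H}_n(0)\le -a$. Then either $\sup_{[-T,T]}\hat{\mathcal H}_n\le -a/2$, which is the event defining $q^-_{n,t}(a/2,T)$, or there is $y\in[-T,T]$ with $|\hat{\mathcal H}_n(y)-\hat{\mathcal H}_n(0)|\ge a/2$. The function $\hat{\mathcal H}_n$ differs from the process $\hat{\mathcal H}^t_n$ of Lemma~\ref{lemma:opt_tran} by the constant $e^{(t)}_n$, so its increments are the same. Apply that lemma on the interval $[-T,T]$ (length $2T$). For $x,y\in[-T,T]$ the quantity $|x-y|\log(4T/|x-y|)$ is at most a universal constant times $T$. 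Hence the second event forces the normalized modulus to exceed $c\,a/\sqrt{T}$, which has probability at most $2e^{-da^2/T}$.

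\emph{Proof of \eqref{eq:first_ineq}, setup.} Work on the event $E=\{\sup_{[-T,T]}\hat{\mathcal H}_n\le -a\}$, intersected with the good event
\[
G=\{\hat{\mathcal H}_i(\pm T)\le \kappa \text{ for all } i\le n-1\}.
\]
By stationarity of $\hat{\mathcal H}$ and a union bound over $i\le n-1$ and the two endpoints, $\prob(G^c)\le 2\sum_{i=1}^{n-1}p^+_{i,t}(\kappa)$, which is the sum in \eqref{eq:first_ineq}. Condition on $\mathcal F_{\ext}$ for the lines $1,\dots,n-1$ on $[-T,T]$. On $E\cap G$ we have:
\begin{itemize}
\item the lower boundary satisfies $\mathcal H_n\le f_n-a$;
\item the endpoints satisfy $\mathcal H_i(\pm T)\le e_i-T^2/(2t)+\kappa$.
\end{itemize}
By Lemmas~\ref{lemma:stoch_dom1} and~\ref{lemma:stoch_dom2}, the lines $\mathcal H_1,\dots,\mathcal H_{n-1}$ are then stochastically dominated by any $\Ham$-Brownian ensemble whose lower boundary and endpoints are at least these values.

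\emph{Choice of dominating ensemble.} I would dominate by a vertically shifted copy of the $(n-1)$-line Toda ensemble $\prob^{t',n-1}_\Ham$ for a modified aperture $t'$. The natural choice is $t'=\lambda t$ with $\lambda<1$, so the parabola bends down faster, together with a shift $s$. The requirements are:
\begin{enumerate}
\item[(i)] its lower boundary dominates the true one on $[-T,T]$: $-x^2/(2t')+e'_n+s\ge -x^2/(2t)+e_n-a$;
\item[(ii)] its endpoints dominate the true ones: $e'_i-T^2/(2t')+s\ge e_i-T^2/(2t)+\kappa$ for each $i\le n-1$;
\item[(iii)] its top line at $0$ sits low: $e'_1+s\le e_1-a/2$.
\end{enumerate}
The steeper parabola gains $T^2(1/t'-1/t)/2$ of height at $\pm T$ relative to $0$. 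The changed gaps $\log(i/t')$ versus $\log(i/t)$ cost about $n\log(1/\lambda)\approx n(1-\lambda)$ in total. Take $1-\lambda\approx a/(a+n)$. Then the gap cost is of order $a$, and the gain at $\pm T$ is of order $T^2(1-\lambda)/t$. The lower bound $T\ge 4t^{1/2}(a+\kappa)^{1/2}(1+n/a)^{1/2}$ makes this gain exceed $2(a+\kappa)$. That surplus is what allows (i)--(iii) to hold simultaneously, with $s\approx -3a/4$.

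\emph{Conclusion.} By (iii) and domination, $\mathcal H_1(0)\le e_1-a/4$ unless the top line of the Toda ensemble rises more than $a/4$ above its Toda location. Lemma~\ref{lemma:top_line_melon} with $t'$ and $n-1$ lines bounds this by $e^{-da^2/T}$, which needs $a/4>cTn^\epsilon M$; this is the upper constraint on $T$ in \eqref{E:param-ranges}, since $t'\asymp t$ keeps $M$ unchanged up to constants. The remaining event $\hat{\mathcal H}_1(0)\le -a/4$ has probability $p^-_{1,t}(a/4)$. Summing the three contributions gives \eqref{eq:first_ineq}. For $q^+$ the mirror argument uses a wider aperture $t'>t$, raises the boundary, and controls the endpoints from below using the $p^-_{i,t}(\kappa)$.

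\emph{Main obstacle.} The hardest step is the bookkeeping in (i)--(iii). The gap shift $\sum_{j<i}\log(t/t')$ depends on $i$, so (ii) must be checked for every $i\le n-1$, and for all $x\in[-T,T]$ in (i). The constant $4$ and the factor $(1+n/a)$ in \eqref{E:param-ranges} should come out of optimizing $\lambda$ here. If $\lambda$ cannot be chosen cleanly, I would fall back on a slightly different shift $s$ and absorb the losses into the constant $d$.
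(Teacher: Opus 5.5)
Your argument for \eqref{eq:second_ineq} is the same as the paper's. Your overall architecture for \eqref{eq:first_ineq} also matches the paper: a union bound off the endpoint event and $\{\hat{\mathcal H}_1(0)\le -a/4\}$, conditioning on $\mathcal F_{\ext}$ for lines $1,\dots,n-1$, a monotone coupling into a vertically shifted Toda ensemble with modified aperture $t'$, then Lemma~\ref{lemma:top_line_melon}.

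However, the key step fails as written. For the $q^-$ case you take $t'=\lambda t$ with $\lambda<1$, and then your own requirements (ii) and (iii) contradict each other. Take $i=1$ in (ii) and subtract (iii):
\[
\frac{T^2}{2t}-\frac{T^2}{2t'}\ \ge\ \kappa+\frac a2>0 ,
\]
which forces $t'>t$. Geometrically, the dominating top line must sit $\kappa$ above the true Toda endpoints at $\pm T$ while sitting $a/2$ below the Toda location at $0$. So its center-to-edge drop $T^2/(2t')$ must be smaller than $T^2/(2t)$. A steeper parabola makes this drop larger, and no choice of shift $s$ can repair that. Your claim that the steeper parabola ``gains'' height at $\pm T$ relative to $0$ has the sign reversed. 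Correspondingly, the mirror argument for $q^+$ needs the steeper aperture $t'<t$, not the wider one.

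The fix, which is the paper's proof, is to use a flatter parabola. Replace $f_n$ by $g_n(x)=-x^2/(2t')+e^{(t)}_n-a\ge f_n$, with $t'$ chosen so that $g_n(\pm T)-v_{n-1}=\log((n-1)/t')$, i.e.
\[
\frac{t'-t}{2tt'}=\frac{\kappa+a-\log(t'/t)}{T^2}.
\]
This has a unique solution $t'\in[t,2t]$ because $T^2>16t(\kappa+a)$. Then set $v'_{n-1}=v_{n-1}$ and $v'_i=v'_{i+1}-\log(i/t')$. Since the Toda gaps shrink when $t'>t$, one has $v'_i-v_i=(n-1-i)\log(t'/t)\ge0$, so this genuinely raises the data.

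The price is that the top Toda line at $0$ moves up by $(n-1)\log(t'/t)\le n(t'-t)/t\le a/4$. This uses $t'\le 2t$ and $T^2\ge 16tn(\kappa+a)/a$, and it is where the factor $(1+n/a)$ in \eqref{E:param-ranges} enters. With the sign flipped, your scaling $|\lambda-1|\asymp a/(a+n)$ is the right one. Since $t'\ge t$, the constant $M$ for aperture $t'$ is no larger than for $t$, so Lemma~\ref{lemma:top_line_melon} applies exactly as you intended.
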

	
	\begin{proof}
		To prove \eqref{eq:second_ineq}, notice that
		\begin{align}
			\left\{   \inf_{x \in [-T, T]} \pm \hat{\mathcal{H}}_n(x)  \leq -a  \right\}
			&\subset  \left\{  \sup_{u, v \in [-T, T] , \, u \neq v} |\hat{\mathcal{H}}_n(u) - \hat{\mathcal{H}}_n(v)| \leq \frac{a}{2}   \right\} \cup  \left\{   \sup_{x \in [-T, T]} \pm \hat{\mathcal{H}}_n(x) \leq- \frac{a}{2} \right\}.
		\end{align}
		The inequality \eqref{eq:second_ineq} then follows from applying Lemma \ref{lemma:opt_tran} to the first event on the right-hand side above.
		
		Now we proceed to the proof of \eqref{eq:first_ineq}. We prove the $-$ case only as the $+$ case follows symmetrically. Define the events
		\begin{align}
			A_{T, a}^{n } \defeq   \{ \sup_{x \in [-T, T]}  \hat{\mathcal{H}}_{n}(x) \leq - a  \} , \qquad  \mathfrak{E}^{n}_{T, \kappa} \defeq \bigcap_{i = 1}^{n - 1}  \left\{ \hat{\mathcal{H}}_i(T) \leq \kappa  \textrm{ and }  \hat{\mathcal{H}}_i(-T) \leq   \kappa \right\}  .
		\end{align}
		By a union bound,
		\begin{align} \label{eq:first_step}
			q_{n,t}^{-}(a,T) & \leq \prob \left( A_{T,a}^n  \cap \mathfrak{E}^{n}_{T, \kappa} \cap \left\{ \hat{\mathcal{H}}_1(0) \geq - \frac{a}{4} \right\}  \right)    +  2 \sum_{i = 1}^{n - 1} p_{n , t}^{+}(\kappa) + p_{1, t}^{-}(\frac{a}{4}).
		\end{align}
		where the summation in the above inequality uses the stationarity of process $\mathcal{H}_i(x) + \frac{x^2}{2t}$. Thus, it only remains to bound the first term of the right-hand side of the above inequality, which we do by an $\Ham$-Brownian Gibbs argument. First, define the exterior $\sigma$-algebra and its corresponding conditional distribution as
		\begin{align}
			\mathcal{F}_{\textrm{ext}} & \defeq \sigma \Big\{ \mathcal{H}_i(x) \, : \, (i, x) \not\in \{1, \dots, n-1\} \times [-T, T] \Big\} , \qquad \prob_{\Ham, \mathcal{F}_{\textrm{ext}}} (A) \defeq \E_{\Ham}  [\mathbbm{1}_{A} \, | \, \mathcal{F}_{\textrm{ext}}].
		\end{align} 	
		Since $A^{n}_{T, a}$ and $\mathfrak{E}^n_{T, \kappa}$ are $\mathcal{F}_{\textrm{ext}}$ measurable, it follows that  
		\begin{align} \label{eqn:first_raise}
			\prob \left( A_{T,a}^n  \cap \mathfrak{E}^{n}_{T, \kappa} \cap \left\{ \hat{\mathcal{H}}_1(0) \geq - \frac{a}{4} \right\}  \right)   &= \E_{\Ham} \Big[  \prob_{\Ham, \mathcal{F}_{\ext}} (\hat{\mathcal{H}}_1(0) \geq -\frac{a}{4}) \mathbbm{1} (A_{T, a}^{n} \cap \mathfrak{E}^{n}_{T, \kappa} )    \Big].
		\end{align}
		Now define 
		\begin{align}
			\nonumber
			f_{n}(x) &=  -\frac{x^2}{2t} + e_{n}^{(t)} - a ,  \\
			\nonumber
			v_i &=  -\frac{T^2}{2t} + e_i^{(t)} + \kappa , \qquad  \vec{v} = (v_1 , \ldots , v_{n-1}) .
		\end{align} 
		Notice that on the event $A^{n }_{T, a} \cap \mathfrak{E}^{n, \kappa}$, it holds that $\mathcal{H}_{i}(\pm T) \leq v_i$ and $\mathcal{H}_{n} \leq f_{n}$. Thus, by Lemmas \ref{lemma:stoch_dom1} and \ref{lemma:stoch_dom2}, the term inside the expectation in \eqref{eqn:first_raise} is bounded above by
		\begin{align}\label{eqn:Hprob}
			\prob_{\Ham}^{1, n-1, (-T, T), \vec{v}, \vec{v}, \infty, f_{n}} \big( \mathcal L_1(0) \geq \frac{t}{24} -\frac{a}{4} \big).
		\end{align}
		We now apply the following procedure to raise the boundary data. First, we decrease the curvature of the boundary parabola $f_{n}$ to a new parabola $g_{n}$ so that the gap $g_{n }(\pm T) - v_{n-1}$ exactly matches the Toda spacing from Section \ref{sec:toda_bdary}. More precisely, define $t' > t$ by the implicit equation
		\begin{align}
			- \frac{T^2}{2t'} + e_{n}^{(t)}  - a-    \log(\frac{n - 1}{t'}) =  v_{n-1},
		\end{align}
		or equivalently
		\begin{align} \label{eqn:curv_lwer}
			\frac{t' - t}{2 t t'} &= \frac{\kappa + a - \log (\frac{t'}{t})}{T^2}.
		\end{align}
		The lower bound on $T$ guarantees $T^2 > 16t (\kappa + a)$ which implies that this equation has a unique solution $t' \in [t, 2t]$. To see this, let $u = t'/t$. Then \eqref{eqn:curv_lwer} is equivalent to
		\begin{align*}
			\frac{1}{u} - \frac{2 t}{T^2} \log u = 1 - \frac{2t (\kappa + a)}{T^2} \in (\frac{1}{2} , 1). 
		\end{align*}
		Existence and uniqueness of $u$ satisfying the above equation then follows since the function $h(u) = \frac{1}{u} - \frac{2t}{T^2} \log u$ is strictly decreasing for $u \in [1, 2]$, and $h(1) = 1, h(2) < 1/2$.
		
		Second, we raise the boundary points $v_1, \ldots , v_{n - 1}$ by decreasing the gaps $v_{i+1} - v_i$, while keeping $v_{n-1}$ fixed, again to match the gap spacing of Section \ref{sec:toda_bdary}. This procedure yields the following raised boundary conditions:
		\begin{align}  
			v_{n-1}' &= v_{n-1}, \qquad v_i' = v_{i+1}' -  \log (\frac{i}{t'}), \qquad \vec{v}' = (v_1' , \ldots , v_{n-1}'),\\
			g_{n}(x) &=  -  \frac{x^2 }{2t'} + e_{n}^{(t)} - a.
		\end{align}
		By Lemmas \ref{lemma:stoch_dom1} and \ref{lemma:stoch_dom2}, the $\Ham$-Brownian line ensemble $\mathcal L'$ with the raised boundary data $(\vec{v}' , \vec{v}', g)$ stochastically dominates the $\Ham$-Brownian line ensemble $\Lag$ with boundary data $(\vec{v} , \vec{v}, f)$, and therefore \eqref{eqn:Hprob} is bounded above by
		\begin{align} \label{eqn:app_of_prev_sec}
			\prob_{\Ham}^{1, n,(-T, T), \vec{v}', \vec{v}',  \infty. g_n} \big( \Lag_1'(0) \geq \frac{t}{24} -\frac{a}{4} \big) .
		\end{align}
		We now wish to apply Lemma \ref{lemma:top_line_melon} to show the above probability is rare. First, note that the parabola where we expect to the first curve to sit is now given by
		\begin{align*}
		g_1(x) :=g_n(x) - \sum_{i=1}^{n-1} \log(\frac{i}{t'} )
		 &= \frac{t}{24} - \frac{x^2}{2t} - a + (n-1) \log (\frac{t'}{t})
		\end{align*}
		Now, we have that
		\begin{align*}
			(n-1) \log (t'/t) \leq \frac{n(t' - t)}{t} = \frac{2 t'n(\kappa + a - \log (t'/t))}{T^2} \le \frac{a}{4}. 
		\end{align*}
		Here the equality uses the definition \eqref{eqn:curv_lwer} and the last inequality uses that $t' \in [t, 2t]$ and $T^2 > 16 t n (\kappa + a)/a$.
		Hence, we have that \eqref{eqn:app_of_prev_sec} is bounded above by
		\begin{align*}
			\prob_{\Ham}^{1, n,(-T, T), \vec{v}', \vec{v}',  \infty, g_n} \big( (\Lag_1'  - g_1)(0) \geq \frac{a}{2} \big).
		\end{align*}
		Since $a \geq c T n^{\epsilon} M$, we may now apply Lemma \ref{lemma:top_line_melon} to conclude the proof.
	\end{proof}
	
	Using this lemma, we may now prove Theorem \ref{thm:main_thm}. We restate it here with the present notation.

	\begin{thm} \label{theorem:tails_kpzLE_sharp}
		Fix $\epsilon > 0, t_0 > 0$.  There exists constants $d, C > 0$ so that for all $n \in \N, t > t_0$ and $a \geq a_0 := C (t n^{2\epsilon} + t^{1/4} n^{3/4 + 2\epsilon})$ we have
		\begin{align*}
			\prob \left(  | \hat{\mathcal{H}}_n(0) | \geq a \right) & \leq 2 \exp(- d a M n^{\epsilon}).
		\end{align*}
		Here $M = \max(1, t^{-1/4} n^{1/4})$.
	\end{thm}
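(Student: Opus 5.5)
The plan is to close the loop of inequalities from Lemma \ref{lemma:curv_adj_lemma}, using the a priori deep upper tail bound of Lemma \ref{lemma:endpts_expmm} as the seed. Combining \eqref{eq:second_ineq} with \eqref{eq:first_ineq}, for admissible $T,\kappa$ we get
\begin{align*}
p^{\pm}_{n,t}(a) \le 2e^{-d a^2/T} + e^{-d a^2/(4T)} + p^{\pm}_{1,t}(a/8) + 2\sum_{i=1}^{n-1} p^{\mp}_{i,t}(\kappa),
\end{align*}
valid whenever $4t^{1/2}(a/2+\kappa)^{1/2}(1+2n/a)^{1/2} \le T \le a/(2cn^\epsilon M)$. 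I will choose $T$ at the upper end, $T \asymp a/(n^\epsilon M)$. Then $a^2/T \asymp a n^\epsilon M$, which is exactly the target exponent. The term $p_{1,t}^\pm(a/8)$ is controlled by Lemma \ref{lemma:kpzeqn_tails}: it is at most $2\exp(-d a^{3/2}t^{-1/2})$, and this is dominated by $\exp(-daMn^\epsilon)$ when $a \ge C t n^{2\epsilon}M^2$. I will check that $a\ge a_0$ suffices here, possibly after adjusting exponents of $n^\epsilon$ by replacing $\epsilon$ with a multiple of it.

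The admissibility of $T$ requires roughly $t(a+\kappa)(1+n/a) \lesssim a^2/(n^{2\epsilon}M^2)$. If I take $\kappa$ comparable to $a$, say $\kappa = a$ or $\kappa=a/2$, this becomes $t(a+n) \lesssim a^2 n^{-2\epsilon}M^{-2}$. Since $M^2 = \max(1,t^{-1/2}n^{1/2})$, this is implied by $a \gtrsim tn^{2\epsilon}$ together with $a \gtrsim t^{1/4}n^{3/4+\epsilon}$. That is where the threshold $a_0$ comes from.

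The remaining issue is the recursive term $\sum_{i<n} p^{\mp}_{i,t}(\kappa)$. I intend an induction on $n$, assuming the bound for all $i<n$ at level $\kappa\ge a_0(i)$. Because $a_0(i)\le a_0(n)$ and $\kappa\asymp a$, the hypothesis gives $p^{\mp}_{i,t}(\kappa) \le 2\exp(-d\kappa M_i i^\epsilon)$. The problem is that this is weaker than $\exp(-daMn^\epsilon)$ for small $i$, so a naive induction loses.

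I expect this loss to be the main obstacle, and I would handle it by not keeping the constant fixed along the recursion. For the lower tail $p^-_n$, the relevant sum involves $p^+_{i}$, and for these there is a different option: Lemma \ref{lemma:endpts_expmm} gives $p^+_i(\kappa) \le 2e^{-d\kappa^{3/2}t^{-1/2}} + ie^{-\kappa/(2i)}$. This is superb if $\kappa \gg n\cdot an^\epsilon M$. So I would choose $\kappa$ large, i.e.\ a deep tail level. The price is admissibility, which needs $T^2 \gtrsim t\kappa(1+n/a)$, i.e.\ $a$ larger. To balance this, iterate a bounded number of times, about $O(1/\epsilon)$ rounds. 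First use Lemma \ref{lemma:endpts_expmm} to get $p^+$ bounds at very large levels. Feed these into the $p^-$ inequality to get $p^-$ at smaller levels. Then feed those back into the $p^+$ inequality, whose sum involves $p^-_i$, to improve $p^+$ at smaller levels. Each round the level drops by a factor of roughly $n^{\epsilon}$ while the exponent stays of order $aMn^\epsilon$. After finitely many rounds the level reaches $a_0$, and the accumulated polynomial prefactors ($2n$ per round) are absorbed into the exponent by shrinking $d$, since $aMn^\epsilon \gg \log n$ for $a \ge a_0$. Carrying this multi-scale bookkeeping through carefully is the delicate step. I would first check whether choosing $\kappa = a n^{\epsilon}$ at each round keeps both the admissibility of $T$ and the tail exponents consistent.
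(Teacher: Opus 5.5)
Your architecture matches the paper's. You feed \eqref{eq:second_ineq} into \eqref{eq:first_ineq}, alternate $p^+\to p^-\to p^+$ to push the bound to deeper tail levels, and seed with Lemma \ref{lemma:endpts_expmm}. Your handling of $p^{\pm}_{1,t}(a/8)$ and your diagnosis of why a naive induction on $n$ loses are also correct.

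The step that fails is the schedule you commit to: a level ratio $\kappa/a = n^{\epsilon}$ per round, with only $O(1/\epsilon)$ rounds. To keep the exponent $a^2/T \gtrsim a n^{\epsilon} M$ you need $T \lesssim a/(n^{\epsilon}M)$. The lower bound in \eqref{E:param-ranges} then caps $\kappa$ at order $a^2/(t n^{2\epsilon} M^2 (1+n/a))$. At $a \asymp a_0$ this cap is only a constant multiple of $a$. For instance, when $t \ge n$ one has $M=1$ and $a_0 \asymp t n^{2\epsilon} \ge n$, whereas $\kappa = a n^{\epsilon}$ would require $a \gtrsim t n^{3\epsilon}$. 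Replacing $\epsilon$ by a multiple does not repair this, because the same $\epsilon$ also controls the exponent: matching the threshold $t n^{2\epsilon}$ that way only gives $\exp(-d a n^{2\epsilon/3} M)$. Since the admissible ratio is $O(1)$ near $a_0$ and improves only as the level grows, the number of rounds has to be allowed to grow with $n$. A bounded number of rounds cannot lift you from $a_0$ to the levels $\gtrsim a n^{1+\epsilon} M$ where Lemma \ref{lemma:endpts_expmm} beats $e^{-d a n^{\epsilon} M}$.

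What you are missing is that a growing number of rounds is affordable. The paper takes $\kappa$, and then $\sigma$, as large as \eqref{E:param-ranges} allows; for $a \ge a_0$ this guarantees $\kappa \ge 2a$ and $\sigma \ge 2\kappa$. It then runs the full loop $p^+ \to q^+ \to p^- \to q^- \to p^+$ and works with $P^+_{n,t}(a) = \max_{i \le n} p^+_{i,t}(a)$. This is legitimate because an $(n,t)$-good triple is $(i,t)$-good for every $i \le n$. The result is $P^+_{n,t}(a) \le 2e^{-d a n^{\epsilon} M} + n^2 P^+_{n,t}(4a)$. Iterating this $\lceil 3\log_4 n \rceil$ times reaches level $a n^3$, where the bound from Lemma \ref{lemma:endpts_expmm} is of lower order. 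The accumulated prefactor $n^{O(\log n)} = e^{O(\log^2 n)}$ is still absorbed, since $a n^{\epsilon} M \ge C t_0 n^{3\epsilon}$ for $a \ge a_0$ with $C$ depending on $\epsilon, t_0$. The lower tail then follows from one more pass, $p^-_{n,t}(a) \le 2e^{-d a n^{\epsilon} M} + n P^+_{n,t}(2a)$. With this constant-ratio, $O(\log n)$-round schedule in place of your bounded one, the argument goes through at the stated threshold $a_0$.
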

	
	In Theorem \ref{theorem:tails_kpzLE_sharp}, the $n^\epsilon$ lower bounds are suboptimal, and could be improved by sharpening the proof. We have not attempted this, as we do not expect that this gives a sharp result in any case.
	
	\begin{proof}
		Let $n \in \N, t > t_0 > 0, \epsilon > 0$. In what follows, constants $c, d > 0$ are constants that may change from line to line and are allowed to depend on $\epsilon$ and $t_0$, but on no other parameters. We first prove a bound on the upper tail, i.e.\ a bound on $p_{n, t}^+(a)$. We say that a triple $(T, a, \kappa)$ is $(n, t)$-\emph{good} if the inequalities \eqref{E:param-ranges} hold for the given $n, t, \epsilon$. Now, by using the four inequalities in Lemma \ref{lemma:curv_adj_lemma}, we have that
		\begin{align*}
			p_{n,t}^{+}(a) & \leq 2 \exp(- d a^2/T) + q_{n,t}^{+}(\frac{a}{2}, T) \\
			& \leq p_{1,t}^{+}(\frac{a}{8}) + 2 \exp(-d a^2/T) 	+ 2 \sum_{i=1}^{n - 1} p_{i,t}^{-} (\kappa)		\\
			& \leq p_{1,t}^{+}(\frac{a}{8}) + 2 \exp(- d a^2/T) + 2 n \exp(- d \kappa^2/L) + \sum_{i = 1}^{n - 1} q_{i, t}^{-} (\frac{\kappa}{2} , L)  \\   
			& \leq p_{1,t}^{+} (\frac{a}{8}) + 2 n p_{1, t}^{-} (\frac{\kappa}{8}) + 2 \exp(- d a^2 / T) + 2 n \exp(- d \kappa^2 / L) + 2 \sum_{i = 1}^{n - 1} \sum_{j = 1}^{i - 1} p_{j, t}^{+}(\sigma).
		\end{align*}
		Here the first inequality uses \eqref{eq:second_ineq} for $p_{n, t}^+$, the second inequality uses \eqref{eq:first_ineq} for $q_{n, t}^+$ and requires that the triple $(T, a/2, \kappa)$ be $(n, t)$-good. The third inequality uses \eqref{eq:second_ineq} for $p_{i, t}^-, i = 1, \dots, n-1$ and the fourth inequality uses \eqref{eq:first_ineq} for $q_{i, t}^+, i = 1, \dots, n-1$ and requires that the triple $(L, \kappa/2, \sigma)$ be $(i, t)$-good for all $i = 1, \dots, n-1$. Noting that any $(n, t)$-good triple is $(i, t)$-good for all $i \le n$, the above chain of inequalities holds as long as both of the triples $(T, a/2, \kappa), (L, \kappa/2, \sigma)$ are $(n, t)$-good. 
		
		Before setting $T, L, \kappa, \sigma$ observe that we can make one more simplification to the computation. If we set $P_{n, t}^+(a) = \max_{i =1 , \dots, n} p_{i, t}^+(a)$, then observe that subject to the same constraints on $a, T, L, \kappa, \sigma$ we have that
		\begin{equation}
			\label{E:somewhat-easier}
			P_{n, t}^+(a) \le p_{1,t}^{+} (\frac{a}{8}) + 2 n p_{1, t}^{-} (\frac{\kappa}{8}) + 2 \exp(- d a^2 / T) + 2 n  \exp(- d \kappa^2 / L) + n^2 P_{n, t}^{+}(\sigma).
		\end{equation}
		To see why \eqref{E:somewhat-easier} holds, we use that that $	P_{n, t}^+(a) = p_{j, t}^+(a)$ for some $j \le n$, and then apply the above inequality chain to $p_{j, t}^+$, replacing the final sum of $p_{j, t}^+$-terms with the larger $P_{n, t}^+$-terms. The constraints on $a, T, L, \kappa, \sigma$ still apply since the property of being $(i, t)$-good becomes stronger with increasing $i$, as mentioned previously. 
		
		Examining \eqref{E:somewhat-easier}, one can guess that under the right parameter choices, it transfers an inequality from the deep tail $P_{n, t}^+(\sigma)$ over to a shallower tail $P_{n, t}^+(a)$ for $a < \sigma$. We also see that the inequality becomes stronger the more we increase $\kappa, \sigma$ so we will set these maximally given the second constraint in \eqref{E:param-ranges}. For a large constant $c > 0$, let
		\begin{align*}
			T &= \frac{a}{c n^{\epsilon} M} , \qquad \kappa = \frac{1}{10^3 c^2} \frac{a^2  }{t n^{2 \epsilon} M^2 (1 + n/a)} , \\
			L &= \frac{1}{c} \frac{\kappa}{n^{\epsilon} M}  , \qquad \sigma = \frac{1}{10^3 c^2} \frac{\kappa^2 }{t n^{ 2 \epsilon} M^2 (1 + n/a)} .
		\end{align*} 
		We have that $\kappa \ge 2a$ and $\sigma \ge 2 \kappa$ under the assumption that 
		$$
		a > C (t n^{2\epsilon} M^2 + \sqrt{t} n^{1/2 + \epsilon} M),
		$$
		for $C > 0$ sufficiently large given $c, \epsilon, t_0$. The right-hand side above is bounded above by $a_0$ in the statement of the theorem. Moreover, given that $\kappa \ge 2a$, $\sigma \ge 2 \kappa$, it is straightforward to check that the triples $(T, a/2, \kappa), (L, \kappa/2, \sigma)$ are $(n, t)$-good. Finally, $a^2/T \leq \kappa^2/L$ since $\kappa \ge 2 a$. Hence, equation \eqref{E:somewhat-easier} simplifies to:
		\begin{equation*}
			P_{n,t}^{+}(a) \leq (2n + 1) p_{1,t}^{+}(\frac{a}{8}) + (2n + 1) \exp(-d a^2/T) + n^2 P_{n, t}^{+}(\sigma). 
		\end{equation*}
		Now, the first two terms on the right-hand side above are both bounded above by $2 \exp(-d a n^{\epsilon} M)$. For the first term, this uses Lemma \ref{lemma:kpzeqn_tails} and the lower bound on $a$. This gives
		\begin{equation}
			P_{n,t}^{+}(a) \leq 2 \exp(-d a n^{\epsilon} M ) + n^2 P_{n, t}^{+}(\sigma) \leq 2 \exp(-d a n^{\epsilon} M ) + n^2 P_{n, t}^{+}(4a).
			\label{E:before_iteration}
		\end{equation}

		Next, let $\ell = \lceil 3 \log_4 n \rceil$. By iterating the above inequality $\ell$ times, we obtain
		\begin{align*}
			P_{n,t}^{+}(a) \leq c n^{2\ell}  \exp(-d a n^{\epsilon} M ) + c n^{2\ell} P_{n, t}^{+}(a n^3).
		\end{align*}
		The second term on the right-hand side above can be bounded by Lemma \ref{lemma:endpts_expmm}, and is easily seen to be lower order when compared with the first term. The bound on $p_{n, t}^+(a)$ in the lemma then follows by simplifying through decreasing $d$, noting that the prefactor $n^{2 \ell}$ is lower order when compared to the exponent.
		
		We now bound $p_{n, t}^-(a)$ through the bounds on $p_{n, t}^+$. Indeed, similar to the computation at the beginning of the lemma, we have that for any triple $(T, a/2, \kappa)$ which is $(n, t)$-good, we have
		\begin{align*}
		p_{n,t}^{-}(a) & \leq 2 \exp(- d a^2/T) + q_{n,t}^{-}(\frac{a}{2}, T) \\
& \leq p_{1,t}^{-}(\frac{a}{8}) + 2 \exp(-d a^2/T) 	+ 2 \sum_{i=1}^{n - 1} p_{i,t}^{+} (\kappa).
		\end{align*}
Choose $T, \kappa$ and let $a > a_0$ so that $\kappa > 2 a$. Then as in our previous computations, we have that
$$
p_{n,t}^{-}(a) \le 2 \exp(-d a n^{\epsilon} M ) + n P_{n,t}^{+} (2a).
$$ 		
Applying the previously established bounds on $P_{i, t}^+$ then yields the result.
	\end{proof}
	
	\begin{proof}[Proof of Corollary \ref{C:mod-plus-moment}] We use the $\hat{\mathcal{H}}_n, M$ notation from previous proofs. Let $I = \mathbb Z \cap [-e^{a}, e^a]$. Then
	\begin{align*}
			\prob \Big(  &\sup_{x \in [-e^a, e^{a}]} | \hat{\mathcal{H}}_n(x) | > a \Big) \\			
			&\le \sum_{i \in I} \prob \left(| \hat{\mathcal{H}}_n(i) | > a/2 \right) + \prob \left( \exists s < t < s + 1 \in [-e^a, e^a] : |\hat{\mathcal{H}}_n(s) - \hat{\mathcal{H}}_n(t)| > a/2\right) \\
			&\le 3e^{a-d a n^\epsilon M} +e^{-d a^2}. 
	\end{align*}
The second inequality uses Theorem \ref{thm:main_thm} and stationarity of $\hat{\mathcal{H}}_n$ to bound the sum over $i \in I$, and Lemma \ref{lemma:opt_tran} for the second term. Simplifying the final result using the lower bound on $a$ gives the corollary.
	\end{proof}

	\appendix
	
	\section{Proof of monotone couplings for vector-valued Hamiltonians}
	
	\begin{lemma}  Consider two sets of boundary functions $f^{(1)}, f^{(2)}: (a, b) \to \R$ such that $f^{(1)} \leq f^{(2)}$, as well as endpoint pairs $ (x^{(i)}_j, y^{(i)}_j )_{j = 1}^k$ for $i \in \{1, 2\}$ such that $x^{(1)}_j \leq x^{(2)}_{j} $ and $y^{(1)}_{j} \leq y^{(2)}_j$ for all $j \in \{ 1, \ldots, k\}$. In addition, fix a vector of $n$ Hamiltonians $\mathbb{H} = (\mathcal{H}_1 , \ldots, \mathcal{H}_k)$ such that each $\mathcal{H}_i: \R \to [0, \infty)$ is convex. Let $\Lag^{(i)} = (\Lag_1^{(i)} , \ldots, \Lag_k^{(i)})$ be the collection of lines distributed according to $\prob^{(i)}_{\mathbb{H}}$. There exists a coupling of $\Lag^{(1)}, \Lag^{(2)}$ such that $\Lag^{(1)}_j \leq \Lag^{(2)}_j$ for all $j$.
	\end{lemma}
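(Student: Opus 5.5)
We follow the standard Corwin--Hammond strategy: build a discrete-time Markov chain approximation whose invariant measures approximate the $\mathbb{H}$-Brownian bridge measures, construct a monotone coupling of the two chains, and pass to the limit. Concretely, fix a mesh size $2^{-m}$ of $[a,b]$ and work first with random walk bridges (say Gaussian increments, or Bernoulli walk bridges with diffusive scaling) on the mesh, with the Boltzmann weight replaced by its Riemann-sum discretization as in \eqref{E:disc-boltzmann}. For each $i \in \{1,2\}$ the discrete measure $\prob^{(i),m}_{\mathbb{H}}$ is a measure on a finite-dimensional space (finite state space in the Bernoulli version). On this space we run a heat-bath (Gibbs sampler) dynamics: at each step pick a line $j$ and a mesh time $s$ uniformly, and resample the value $\Lag_j(s)$ from its conditional law given all other coordinates. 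Using a common uniform variable $U$, we update both chains by the quantile transform $F^{-1}(U)$ of the respective conditional distribution functions.

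The key step is to show that this update preserves the partial order $\Lag^{(1)} \le \Lag^{(2)}$ (coordinatewise, including the boundary data). The conditional density of $\Lag_j(s)=z$ given the rest is proportional to
\begin{align*}
\exp\Big(-\tfrac{(z-\Lag_j(s^-))^2+(\Lag_j(s^+)-z)^2}{2\delta} - \delta\,\mathcal{H}_{j-1}(z-\Lag_{j-1}(s)) - \delta\,\mathcal{H}_j(\Lag_{j+1}(s)-z)\Big),
\end{align*}
with the convention that the neighbours $\Lag_0,\Lag_{k+1}$ are the boundary functions. Raising any of the neighbouring values $\Lag_j(s^\pm),\Lag_{j-1}(s),\Lag_{j+1}(s)$ multiplies this density by a factor that is nondecreasing in $z$. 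For the Gaussian part this is immediate. For the Hamiltonian terms, convexity of $\mathcal{H}_{j-1}$ means $z \mapsto \mathcal{H}_{j-1}(z-u)-\mathcal{H}_{j-1}(z-u')$ is monotone in the right direction for $u \le u'$, and similarly for $\mathcal{H}_j$. Note that each line sees its own Hamiltonian index, so a vector of Hamiltonians causes no extra difficulty: only convexity of each individual $\mathcal{H}_i$ is used. A monotone likelihood ratio gives stochastic domination of the conditional laws, hence $F_1^{-1}(U)\le F_2^{-1}(U)$, so the order is preserved. Starting both chains from ordered initial states (e.g. linear interpolations of the ordered endpoints), the coupled chain stays ordered forever. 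Each chain is irreducible and aperiodic with the correct invariant measure, so by convergence to stationarity we obtain an ordered coupling of $\prob^{(1),m}_{\mathbb{H}}$ and $\prob^{(2),m}_{\mathbb{H}}$. In the continuous-state Gaussian setting we need Harris-type ergodicity; the Bernoulli version avoids this and is what we would use.

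Finally, let $m\to\infty$. The discrete measures converge weakly to $\prob^{(i)}_{\mathbb{H}}$: the discrete walk bridges converge to Brownian bridges (Donsker), and the discretized Boltzmann weights converge for continuous paths and are bounded by $1$. The partition functions are positive and converge, as in \eqref{E:weak-cvg}. The coupled pairs are therefore tight in $C\times C$, and any subsequential limit is a coupling of the two target laws. The set $\{\Lag^{(1)}_j\le\Lag^{(2)}_j\ \forall j\}$ is closed, so the ordering passes to the limit by the portmanteau theorem.

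We expect the main obstacle to be this limiting step, specifically the weak convergence of the discretized tilted measures when the Hamiltonians are merely convex and possibly unbounded or discontinuous at the boundary, or when $f$ takes infinite values. We would handle this by first treating continuous, finite-valued $\mathcal{H}_i$ and $f$, and then approximating general data monotonically (truncating $\mathcal{H}_i$ and $f$), using the fact that the ordered couplings are stable under weak limits.
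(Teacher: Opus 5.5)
Your proposal is correct and is essentially the paper's argument: the Corwin--Hammond scheme of discretizing to walk bridges, running two coupled single-site Markov chains whose order is preserved by convexity of each $\mathcal{H}_i$, converging to stationarity, and passing to the continuum limit. The differences are cosmetic. You use a heat-bath update with a common-uniform quantile coupling and a likelihood-ratio check, where the paper uses a Metropolis rule with a shared uniform and checks $R^1 \le R^2$ directly. In your Bernoulli version, the likelihood-ratio comparison must also cover the case where the two chains have different one- or two-point conditional supports; a short parity check shows this works.
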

	
	\begin{proof} The proof follows almost exactly from the proof of Lemma 2.6 of \cite{corwin2016kpz} (or Lemma 5.7 of \cite{dimitrov2021characterization}) so we only provide a summary of the proof here. Let $\vec{x}^n = (x^n_1 , \ldots  , x^n_k)$ and $\vec{y}^n = (y^n_1 , \ldots  , y^n_k)$ such that $x^n_j, y^n_j \in n^{-1} (2 \Z)$ and $x^n_j \to x_j$, $y^n_j \to y_j$. We now define a set of dynamics on two sets of random walk bridges, $X^{n, i}_{j}$ for $j = 1, \ldots, k$ and $i \in \{ 1, 2\}$ as follows: each $X^{n, i}$ has endpoints from $x^{n}$ to $y^n$ and when restricted to $n^{-2} \Z \cap [a, b]$ has jumps of size $\pm 2 n^{-1}$. We can extend $X^{n,i}$ to $(a,b)$ by linearly interpolating between these values. We start both sets of random walks at their lowest possible configuration so that it is clear that $(X^{n,1}_{j})_0(u) \leq (X^{n, 2}_{j})_0(u)$ for all $j \in \{1, \ldots, k\}$ and $u \in [a,b]$.
		
		We now specify the continuous time Markov dynamics as follows. For each $s \in n^{-2} \Z \cap [a, b]$, $j \in \{ 1, \ldots, k\}$ and $m \in \{ -1, +1\}$ we have an exponential clock of rate one. When a clock rings, we attempt to alter both of the walks at time $s$ from $X^{n, i}_j(s)$ to $X^{n, i}_{j}(s) + 2 m n^{-1}$ for $i = 1, 2$. We call this proposed change $\tilde{X}^{n, i}$. The change is accepted if it is possible (i.e. the step sizes do not become too large) and according to the Metropolis rule. For $i \in \{ 1, 2\}$ define:
		$$
		R^{i} = \frac{W_{\mathbb{H}} (\tilde{X}^{n,i})}{W_{\mathbb{H}}  (X^{n,i})}
		$$
		Now sample a $U \sim \textrm{Unif}([0, 1])$. The change is accepted if $R^{i} \geq U$, where importantly the same uniform distribution is used for both Markov processes. Since these are both finite-state, irreducible Markov processes, these must converge to their stationary distribution in infinite time. In addition, as $n \to \infty$, it is clear that the corresponding stationary distributions converge to $\prob_{\Ham}$ as $n \to \infty$. Therefore, it remains to check that under this coupling, $X^{n, 1}_{j} \leq X^{n,2}_j$ for all $n, j$. 
		
		Suppose that the clock for $(s, j, m)$ rings and $X_{j}^{n,1}(s) = X_j^{n,2}(s)$. Note that there are only two cases we must consider which may affect the ordering: when both paths at $(s,j)$ looks like $\veeSymbol$ and $+$ rings, or both paths look like $\wedgeSymbol$ and $-$ rings. Suppose that we are in the former case (the second case is argued similarly) and we are trying to flip $\veeSymbol$ to $\wedgeSymbol$. We only need to check that $R^{1} \leq R^{2}$. Note that for each $i$
		\begin{align*}
			R^{i} &= \exp \Big(   \int_{s - n^{-2}}^{s + n^{-2}} \big( \mathcal{H}_{j - 1} (X^{n, i}_j (u) - X^{n, i}_{j - 1}(u) ) - \mathcal{H}_{j - 1} (\tilde{X}^{n, i}_{j} (u) - X^{n, i}_j(u)) \big) \, du   \Big) \\
			& \quad \cdots \times \exp \Big(   \int_{s - n^{-2}}^{s + n^{-2}} \big( \mathcal{H}_{j} (X^{n, i}_{j+1} (u) - X^{n, i}_{j}(u) ) - \mathcal{H}_{j} (\tilde{X}^{n, i}_{j+1} (u) - X^{n, i}_{j}(u)) \big) \, du   \Big).
		\end{align*}
		where the two terms in the product represent the change of the interaction strength between the two adjacent lines. We simplify notation by defining the following functions: 
		\begin{align*}
			d^i_k(u) &= X^{n, i}_{k + 1}(u) - X^{n, i}_{k}(u) , \qquad 
			\delta(u) = \tilde{X}^{n, i}_{j}(u) - X^{n, i}_j(u)
		\end{align*}
		Then we have:
		\begin{align*}
			R^{i} &= \exp \Big(  \int_{s - n^{-2}}^{s + n^{-2}} \big(\mathcal{H}_{j - 1} (d^i_{j - 1}(u))  - \mathcal{H}_{j - 1} (d^i_{j - 1} + \delta(u)) \big) \, du   \Big) \\
			& \cdots \times \exp \Big( \int_{s - n^{-2}}^{s + n^{-2}} \big( \mathcal{H}_{j} (d^i_j(u)) - \mathcal{H}_j(d^i_{j} (u) - \delta(u))   \big) \, du  \Big).
		\end{align*}
		Now as $X^{n, 1}_j \leq X^{n, 2}_j$, we have that $d^1_{j - 1}(u) \geq d^2_{j - 1}(u)$ and $d^{1}_{j}(u) \leq d^{2}_{j}(u)$. In addition, since each $\mathcal{H}_k$ is convex, we have:
		\begin{align*}
			\mathcal{H}_{j - 1} (d^1_{j - 1}(u))  - \mathcal{H}_{j - 1} (d^1_{j - 1} + \delta(u))  & \leq \mathcal{H}_{j - 1} (d^2_{j - 1}(u))  - \mathcal{H}_{j - 1} (d^2_{j - 1} + \delta(u))  \\
			\mathcal{H}_{j} (d^1_j(u)) - \mathcal{H}_j(d^1_{j} (u) - \delta(u) ) & \leq \mathcal{H}_{j} (d^2_j(u)) - \mathcal{H}_j(d^2_{j} (u) - \delta(u) )  
		\end{align*}
		This immediately implies that $R^{1} \leq R^2$, and therefore whenever $X^{n, 1}$ flips up, $X^{n, 2}$ will also flip up. A symmetric argument can be applied to the other case, concluding the proof. 
	\end{proof}

	\bibliographystyle{alpha}
	\bibliography{KPZ_Blk_Height_refs}

\end{document}